\documentclass[11pt]{article}

\usepackage[utf8]{inputenc}

\usepackage{
amsmath,amssymb,amsfonts,amsthm}
\usepackage{bm}
\usepackage{colonequals}
\usepackage{comment}
\usepackage{epsfig} \usepackage{latexsym,nicefrac,bbm}
\usepackage
{xspace}
\usepackage{color,fancybox,graphicx,subfigure,fullpage}
\usepackage[top=1.25in, bottom=1.25in, left=1in, right=1in]{geometry}
\usepackage{tabularx}
\usepackage{hyperref}
\usepackage{cleveref}
\usepackage[boxruled,linesnumbered]{algorithm2e}
\usepackage{siunitx}
\usepackage{mathtools}

\usepackage{mathrsfs}

\usepackage{multicol}
\usepackage{enumitem}
\usepackage{float}
\usepackage{tikz}
\usepackage{framed}

\makeatletter
\newcommand{\vast}{\bBigg@{4}}
\newcommand{\Vast}{\bBigg@{5}}
\makeatother

\newtheorem{theorem}{Theorem}[section]

\newtheorem{definition}[theorem]{Definition}
\newtheorem{lemma}[theorem]{Lemma}
\newtheorem{remark}[theorem]{Remark}
\newtheorem{proposition}[theorem]{Proposition}
\newtheorem{corollary}[theorem]{Corollary}

\newcommand{\bC}{\mathbb{C}}

\newcommand{\bN}{\mathbb{N}}

\newcommand{\bR}{\mathbb{R}}

\DeclareSymbolFont{bbold}{U}{bbold}{m}{n}
\DeclareSymbolFontAlphabet{\mathbbold}{bbold}

\newcommand{\spec}{\mathrm{spec}}

\DeclareMathOperator{\diag}{diag}
\DeclareMathOperator{\maxroot}{max\,root}
\DeclareMathOperator{\minroot}{min\,root}
\DeclareMathOperator{\maxsupp}{max\,supp}
\DeclareMathOperator{\minsupp}{min\,supp}

\renewcommand{\epsilon}{\varepsilon}

\newcommand{\SU}{\mathrm{SU}}

\newcommand{\Sym}{\mathrm{Sym}}

\newcommand{\per}{\mathrm{per}}

\newcommand\numberthis{\addtocounter{equation}{1}\tag{\theequation}}

\newcommand\oone{\boldsymbol{1}}

\def\calB{\mathcal{B}}

\def\calH{\mathcal{H}}

\def\calK{\mathcal{K}}

\def\calP{\mathcal{P}}

\def\calR{\mathcal{R}}
\def\calS{\mathcal{S}}

\def\calW{\mathcal{W}}

\renewcommand\AA{\bm{A}}

\newcommand\CC{\bm{C}}

\newcommand\II{\bm{I}}
\newcommand\JJ{\bm{J}}

\newcommand\TT{\bm{T}}
\newcommand\UU{\bm{U}}
\newcommand\VV{\bm{V}}

\def\aa{\bm{a}}
\newcommand\bb{\bm{b}}

\newcommand\uu{\bm{u}}
\newcommand\vv{\bm{v}}

\newcommand\aalpha{\boldsymbol{\alpha}}
\newcommand\bbeta{\boldsymbol{\beta}}

\usepackage{authblk}

\title{Inequalities for rank-two permanents and finite free convolutions\thanks{This research was partially supported by 
	ONR Award N00014-24-12611.}}
\author[1]{Dmitriy Kunisky\thanks{Email: \texttt{kunisky@jhu.edu}.}}
\author[2,3]{Daniel A.\ Spielman\thanks{Email: \texttt{daniel.spielman@yale.edu}.}}
\author[2]{Xifan Yu\thanks{Email: \texttt{xifan.yu@yale.edu}.}}
\affil[1]{Department of Applied Mathematics \& Statistics, Johns Hopkins University}
\affil[2]{Department of Computer Science, Yale University}
\affil[3]{Department of Statistics and Data Science, Yale University}

\date{August 28, 2026}

\begin{document}

\pagenumbering{roman}

\maketitle

\thispagestyle{empty}

\begin{abstract}
    Bang (1976) proved the inequality for matrix permanents $\per^2(\bm A) \geq 2^{-2n} \per(\bm A \otimes \JJ_2)$, where $\bm J_2$ is the $2 \times 2$ all-ones matrix and $\bm A$ is any $n \times n$ matrix with non-negative entries.
    We show that, if $\bm A$ is any $n \times n$ real-valued matrix with rank at most two (possibly having negative entries), this inequality can be sharpened, replacing the constant $2^{-2n}$ by $1 / \binom{2n}{n} = (n!)^2 / (2n)! > 2^{-2n}$.
    We then show that this sharpened inequality also implies new inequalities for finite free convolutions of polynomials: if $p$ and $q$ are monic real-rooted polynomials of degree $n$, then $(p \boxplus_n q)(x)^2 \geq (p^2 \boxplus_{2n} q^2)(x)$ and $(p \boxtimes_n q)(x)^2 \geq (p^2 \boxtimes_{2n} q^2)(x)$ for all $x \in \bR$, for $\boxplus_n$ and $\boxtimes_n$ the finite free additive and multiplicative convolution operations, respectively, on polynomials of degree $n$.
\end{abstract}

\clearpage

\pagestyle{empty}

\tableofcontents

\clearpage

\pagestyle{plain}
\pagenumbering{arabic}

\allowdisplaybreaks{

\section{Introduction}

\subsection{Permanent Inequalities}

Denoting by $\calS_n$ the symmetric group of permutations of $[n] \colonequals \{1, \dots, n\}$, the \emph{permanent} of a matrix $\AA \in \bR^{n \times n}$ is the polynomial in its entries
\[ \per(\AA) \colonequals \sum_{\pi \in \calS_n} \prod_{i = 1}^n \AA_{i,\pi(i)}. \]
Let us write $\bm J_n$ for the $n \times n$ matrix all of whose entries equal 1 and $\bm I_n$ for the $n \times n$ identity matrix.
Van der Waerden's well-known conjecture on permanents \cite{Waerden-1926-Aufgabe45}, later proved independently by Egorychev \cite{Egorychev-1981-SolutionWaerdenPermanents} and Falikman \cite{Falikman-1981-ProofWaerdenConjecture}, states that the smallest possible permanent of a doubly stochastic matrix is $n! / n^n$, achieved by the matrix $\frac{1}{n}\bm J_n$.
Such extremal problems have immediate consequences in combinatorics, and are also relevant to the study of the thermodynamics of the bipartite monomer-dimer model of statistical physics.
See \cite{Friedland-2012-MatchingsRegularGraphs} for a survey.

A preliminary result towards establishing van der Waerden's conjecture was the looser lower bound $\exp(-n)$, established independently by Bang and Friedland \cite{Bang-1979-MatrixFunctionsPermanentConjecture,Friedland-1979-LowerBoundPermanent}.
These proofs are based on the following permanent inequality, credited to Bang's earlier work \cite{Bang-1976-MatrixfunktionerPermanenthypotese}.
\begin{theorem}[Bang's permanent inequality]
    \label{thm:bang}
    For any $\bm A \in \bR^{n \times n}$ with $\AA_{i,j} \geq 0$ for all $i, j \in [n]$,
    \[ \per^2(\bm A) \geq \frac{1}{2^{2n}} \per( \bm A \otimes \JJ_2). \]
\end{theorem}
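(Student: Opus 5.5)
The plan is to expand both sides as sums of nonnegative monomials in the entries of $\AA$ and compare them coefficient by coefficient. The comparison will come from the fact that every nonnegative integer matrix with all row and column sums equal to $2$ is a sum of two permutation matrices.

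\emph{Step 1: expand the right-hand side.} The matrix $\AA\otimes\JJ_2$ is $2n\times 2n$. Its rows come in identical pairs, each pair being a copy of a row of $\AA$ with every column duplicated, and likewise its columns come in identical pairs. For a matrix with repeated rows and columns there is the standard formula
\[
\per(\AA\otimes\JJ_2)=\prod_{i}2!\,\prod_j 2!\sum_{\bm B}\prod_{i,j}\frac{\AA_{i,j}^{\bm B_{i,j}}}{\bm B_{i,j}!}.
\]
Here $\bm B$ ranges over $n\times n$ matrices with nonnegative integer entries and all row and column sums equal to $2$, and $\bm B_{i,j}$ records how many of the two copies of row $i$ are matched to copies of column $j$. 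To justify the formula, count the permutations of $[2n]$ producing a given $\bm B$. There are $\prod_j 2!$ ways to assign the copies of column $j$ among the entries. The factors $\prod_i 2!/\prod_{i,j}\bm B_{i,j}!$ count the ways to distribute the copies of row $i$ among columns with the prescribed multiplicities. As a sanity check, $n=1$ gives $2a^2$. Hence
\[
2^{-2n}\per(\AA\otimes\JJ_2)=\sum_{\bm B}\prod_{i,j}\frac{\AA_{i,j}^{\bm B_{i,j}}}{\bm B_{i,j}!}.
\]

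\emph{Step 2: expand the left-hand side.} We have
\[
\per^2(\AA)=\sum_{\sigma,\tau\in\calS_n}\prod_i \AA_{i,\sigma(i)}\AA_{i,\tau(i)}=\sum_{\bm B} N(\bm B)\prod_{i,j}\AA_{i,j}^{\bm B_{i,j}},
\]
where $N(\bm B)$ is the number of ordered pairs $(\sigma,\tau)$ with $P_\sigma+P_\tau=\bm B$. The sum runs over the same index set of matrices $\bm B$, since $P_\sigma+P_\tau$ always has all row and column sums equal to $2$.

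\emph{Step 3: compare coefficients.} Because $\AA_{i,j}\ge 0$, every monomial is nonnegative. It therefore suffices to show that
\[
N(\bm B)\ge \prod_{i,j}\frac{1}{\bm B_{i,j}!}
\]
for every $\bm B$. The right-hand side is at most $1$, so we only need $N(\bm B)\ge 1$, that is, that $\bm B$ is a sum of two permutation matrices. This is the main step, and it is a standard application of König's theorem (equivalently, Hall's theorem). The bipartite multigraph with $\bm B_{i,j}$ edges between row $i$ and column $j$ is $2$-regular, so Hall's condition holds and the graph has a perfect matching $P_\sigma$. Removing it leaves the $1$-regular bipartite multigraph $\bm B-P_\sigma$, which is itself a permutation matrix $P_\tau$.

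Summing the coefficient inequality over all $\bm B$ gives $\per^2(\AA)\ge 2^{-2n}\per(\AA\otimes\JJ_2)$.

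Two remarks. The bound is very lossy when $\bm B$ has many $1$'s, since then $N(\bm B)$ is large, which suggests room for the sharper constant. The nonnegativity of $\AA$ is used essentially in Step 3 and nowhere else, which is why a different argument will be needed for signed rank-two matrices.
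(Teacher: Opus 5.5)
The paper gives no proof of this statement. It quotes Bang's inequality as background, credited to Bang's 1976 work, and proves only the sharpened rank-two version, Theorem~\ref{thm:perm-inequality}. Your argument is a correct, self-contained proof.

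\begin{itemize}
\item \emph{Step~1.} The count $\per(\AA\otimes\JJ_2)=2^{2n}\sum_{\bm B}\prod_{i,j}\AA_{i,j}^{\bm B_{i,j}}/\bm B_{i,j}!$ is right. For each row, distribute its two copies among column indices with the prescribed multiplicities. Then, for each column index, biject the two row copies landing there with that column's two copies.
\item \emph{Step~2.} The expansion of $\per^2(\AA)$ runs over the same index set.
\item \emph{Step~3.} The Hall--K\"onig decomposition gives $N(\bm B)\ge 1\ge\prod_{i,j}1/\bm B_{i,j}!$, which is all you need.
\end{itemize}

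The comparison can even be made exact. The $2$-regular bipartite multigraph of $\bm B$ splits into doubled edges and even cycles of length at least $4$. Hence $N(\bm B)=2^{c(\bm B)}$ and $\prod_{i,j}\bm B_{i,j}!=2^{d(\bm B)}$, where $c(\bm B)$ counts those cycles and $d(\bm B)$ counts the doubled edges.

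Your approach and the paper's proof of Theorem~\ref{thm:perm-inequality} are complementary.
\begin{itemize}
\item Your coefficientwise comparison of monomials needs no rank assumption. It is intrinsically tied to $\AA\ge 0$, however, because it cannot account for cancellation.
\item The paper writes $\per(\AA)=\langle F_{\UU},F_{\VV}\rangle$ and $\per(\AA\otimes\JJ_2)=\langle F_{\UU}^2,F_{\VV}^2\rangle$ as apolar inner products of bivariate forms. It then expands $\langle f,g\rangle^2$ through transvectants and reaches the explicit sum-of-squares identity of Theorem~\ref{thm:perm-inequality-SoS}. That identity tolerates signed entries and yields the constant $1/\binom{2n}{n}$, with equality for every rank-one $\AA$, but only under the rank-two hypothesis.
\end{itemize}

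One caution about your closing remark: the slack between $2^{c(\bm B)}$ and $2^{-d(\bm B)}$ does not mean the sharper constant holds for nonnegative matrices in general. Take the nonnegative $4\times 4$ circulant from the paper's remark. The only $\bm B$ contributing nonzero monomials are $\AA$ itself, $2\II_4$, and twice the cyclic shift. So $\per(\AA\otimes\JJ_2)=2^8\bigl(1+\tfrac{1}{16}+\tfrac{1}{16}\bigr)=288>280=\binom{8}{4}\per^2(\AA)$. The rank restriction is therefore genuinely needed for the sharpening, not merely a finer count in your Step~3.
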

\noindent
A slightly more symmetric form of the result is obtained by substituting 
$\per^2(\AA) = \per( \bm A \otimes \II_2)$.

Our main technical contribution is the following sharpened version of Bang's inequality in the case where $\AA$ has low rank but is also allowed to have negative entries.
\begin{theorem}\label{thm:perm-inequality}
    Let $\AA \in \bR^{n \times n}$ be a matrix of rank at most $2$. Then,
    \begin{align*}
        \per^2(\AA) \geq \frac{1}{\binom{2n}{n}} \per(\bm A \otimes \JJ_2).
    \end{align*}
\end{theorem}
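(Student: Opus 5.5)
We would begin by making the rank-two structure explicit. Write $\AA = \UU\VV^\top$ with $\UU,\VV\in\bR^{n\times 2}$, and let $\aa_i,\bb_j\in\bR^2$ denote the rows, so that $\AA_{i,j}=\langle \aa_i,\bb_j\rangle$. Attach to them the real-rooted binary forms $P(x,y)=\prod_i(\aa_{i,1}x+\aa_{i,2}y)=\sum_k p_k x^ky^{n-k}$ and $Q(x,y)=\prod_j(\bb_{j,1}x+\bb_{j,2}y)=\sum_k q_k x^k y^{n-k}$. Expanding $\prod_i\langle \aa_i,\bb_{\pi(i)}\rangle$ and summing over $\pi$, only the terms pairing $x^k$ with $x^k$ survive, and each such term appears with multiplicity $k!(n-k)!$. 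This gives
\[ \per(\AA)=\sum_{k=0}^n k!(n-k)!\,p_kq_k = n!\,[P,Q]_n,\qquad [P,Q]_n\colonequals \sum_k \binom{n}{k}^{-1}p_kq_k, \]
which is the apolar (Bombieri) pairing of binary $n$-forms. The rows and columns of $\AA\otimes\JJ_2$ are the rows of $\UU$ and $\VV$ each repeated twice, so the same identity gives $\per(\AA\otimes\JJ_2)=(2n)!\,[P^2,Q^2]_{2n}$. Theorem~\ref{thm:perm-inequality} is therefore equivalent to
\[ [P,Q]_n^2\;\ge\;[P^2,Q^2]_{2n} \]
for all binary forms $P,Q$ that split into real linear factors.

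We would next collect the symmetries and the equality cases. The pairing is invariant under $g\in SL_2(\bR)$ acting on $P$ and contragrediently on $Q$, and both sides are homogeneous of the same degree in each factor. A key sanity check is that equality holds when $P=\ell^n$ is a pure power. In that case $[\ell^n,Q]_n=Q(\ell^\ast)$, evaluation at the dual point, and likewise $[\ell^{2n},Q^2]_{2n}=Q(\ell^\ast)^2$. So the inequality is tight when $P$ has a single root of multiplicity $n$, and by symmetry also when $Q$ does.

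The main step is to show that the deficit
\[ D(\ell_1,\dots,\ell_n)\colonequals [P,Q]_n^2-[P^2,Q^2]_{2n},\qquad P=\prod_i \ell_i, \]
is nonnegative with $Q$ held fixed. Fix all factors except $\ell_1$. Then $[P,Q]_n$ is linear in $\ell_1$, say equal to $\langle \aalpha,\ell_1\rangle$, while $[P^2,Q^2]_{2n}$ is a quadratic form $\ell_1^\top \bm M\ell_1$. So $D\ge 0$ for all $\ell_1$ is the $2\times2$ condition $\bm M\preceq \aalpha\aalpha^\top$. We would attempt an induction or smoothing argument based on this. Apply $SL_2$ to normalize $Q$, or the remaining factors of $P$, and then show that replacing two distinct factors $\ell_1,\ell_2$ by a suitable common factor does not increase $D$. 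Iterating drives $P$ to a pure power, where $D=0$.

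An alternative route is to induct on $n$ by peeling off one linear factor from both $P$ and $Q$. The pairing satisfies a recursion $[\ell P',Q]_n=\frac1n\,[P',\partial_{\ell^\ast}Q]_{n-1}$, where $\partial_{\ell^\ast}$ is the directional derivative in the dual direction, and real-rootedness is preserved under differentiation by Rolle's theorem. One would then need the analogous identity for $P^2$ and $Q^2$ and compare the two sides.

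We expect the main obstacle to be verifying $\bm M\preceq\aalpha\aalpha^\top$, equivalently making the smoothing step monotone. This is precisely where real-rootedness must enter, since the inequality fails for general forms. We would try to express $\aalpha$ and $\bm M$ through derivatives of $Q$ and $Q^2$ evaluated against $\prod_{i\ge2}\ell_i$. The $2\times2$ semidefiniteness condition (a $\det\le0$ plus a diagonal condition) then becomes a Newton/Laguerre-type inequality for real-rooted forms. To prove that inequality we would use the interlacing of $Q$ with its derivatives.
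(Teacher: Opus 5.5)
\textbf{There is a genuine gap: the proposal reduces the theorem correctly but never proves the resulting inequality.}

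Your reduction is correct and matches the paper's Lemma~\ref{lem:per-inner-product}: $\per(\AA)=n!\,[P,Q]_n$ and $\per(\AA\otimes\JJ_2)=(2n)!\,[P^2,Q^2]_{2n}$. So the theorem is exactly $[P,Q]_n^2\ge[P^2,Q^2]_{2n}$ for real-rooted $P,Q$, and your pure-power equality check is also right. After that, however, the argument is a plan rather than a proof.

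\begin{itemize}
\item The condition $\bm M\preceq\aalpha\aalpha^\top$ is not a reduction. For fixed $R=\ell_2\cdots\ell_n$ it is literally the theorem for all forms $\ell_1R$, so it is exactly as hard as what you want to prove.
\item The smoothing step never says what the ``suitable common factor'' is, and gives no reason your deficit $D$ should be monotone under the replacement.
\item Merging two factors need not reduce the number of distinct root directions, so termination of the iteration would also need an argument.
\end{itemize}

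The route you suggest for closing the main step does not work as stated. Write $P=\ell_uR$ with $\partial_u=u_1\partial_x+u_2\partial_y$. Then
\begin{itemize}
\item $[P,Q]_n=\frac1n[R,\partial_uQ]_{n-1}$, and
\item $[P^2,Q^2]_{2n}=\frac{1}{n(2n-1)}\bigl([R^2,(\partial_uQ)^2]_{2n-2}+[R^2,Q\,\partial_u^2Q]_{2n-2}\bigr)$.
\end{itemize}
Bounding $[R,\partial_uQ]^2\ge[R^2,(\partial_uQ)^2]$ by induction leaves $D\ge\frac{1}{n^2(2n-1)}[R^2,L]_{2n-2}$, where $L=(n-1)(\partial_uQ)^2-nQ\,\partial_u^2Q$ is the Laguerre form. $L$ is pointwise nonnegative for real-rooted $Q$, but this does not survive pairing with $R^2$. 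The reason is that $R^2$ is generally not a sum of $(2n-2)$th powers of linear forms.

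Concretely, take $n=3$, $u=(1,0)$, $R=x^2-y^2$, $Q=x^3-0.01xy^2$. Then $L=0.06x^2y^2+0.0002y^4\ge0$, yet $\langle R^2,L\rangle=-0.48+0.0048<0$, even though the deficit for $P=xR$ and this $Q$ is positive.

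This is not an accident of your route. In the paper, the transvectant expansion gives the difference as $\sum_{k\ge1}D_{n,k}\sum_{I,J}[F_{\UU,I}^2,F_{\VV,J}^2]\,\Delta_{[n]\setminus I}(\UU)\Delta_{[n]\setminus J}(\VV)$ with $D_{n,k}>0$. There, too, the inductive \emph{inequality} points the wrong way.

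The missing idea is an exact sum-of-squares identity. The paper builds it in four steps:
\begin{enumerate}
\item the Clebsch--Gordan/transvectant decomposition of $\langle F_\UU,F_\VV\rangle^2$ (Theorem~\ref{thm:inner-product-decom});
\item the explicit formula $\tau_{2k}(F_\UU,F_\UU)=(-1)^k c_k\sum_I F_{\UU,I}^2\Delta_{[n]\setminus I}(\UU)$ (Proposition~\ref{prop:transvectant-identity});
\item substituting the inductive sum-of-squares \emph{identity}, not an inequality, into the lower-order terms;
\item proving via the Rogers--Dougall sum that the resulting coefficients $C_{n,k}$ are nonnegative.
\end{enumerate}
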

\noindent
We give the proof of Theorem~\ref{thm:perm-inequality} in Section~\ref{sec:perm-SoS}.
We derive it from the stronger statement, 
 Theorem~\ref{thm:perm-inequality-SoS}, that, for $\AA = \UU\VV^\top$ where $\UU, \VV \in \bR^{n \times 2}$, the difference
\begin{align*}
    \per^2(\AA) - \frac{1}{\binom{2n}{n}} \per(\AA \otimes \JJ_2)
\end{align*}
is an explicit sum of squares of polynomials in the entries of $\UU$ and $\VV$.
As we discuss in Section~\ref{sec:proof-techniques}, our proof  originated from a numerical search for a representation of the difference as a sum of squares.

\begin{remark}
    The assumption on the rank of $\AA$ in Theorem~\ref{thm:perm-inequality} is necessary: the inequality can already fail for $\AA$ of rank at least $3$. For example, it does not hold for the rank-three matrices
    \begin{align*}
        \AA = \begin{bmatrix*}[r]
             -1&-1&-1\\
  -1&-1&1\\
  -1&1&0
        \end{bmatrix*} \,\,\, \text{ or } \,\,\,\, \AA =   \begin{bmatrix*}
  1&1&0&0\\
  0&1&1&0\\
  0&0&1&1\\
  1&0&0&1
  \end{bmatrix*}.
    \end{align*}
    The first example also shows that the non-negativity assumption in Theorem~\ref{thm:bang} is necessary if the assumption on the rank of $\bm A$ is not included.
\end{remark}

\subsection{Finite Free Convolutions}

Our next results draw a surprising connection between Theorem~\ref{thm:perm-inequality} and the \emph{finite free convolution} operations developed in a non-asymptotic version of free probability theory.
The definitions of these operations stem from the work of Marcus, the second author, and Srivastava on interlacing families of polynomials and their applications \cite{MSS-2015-InterlacingFamiliesI,MSS-2015-InterlacingFamiliesII,MSS-2022-InterlacingFamiliesIII,MSS-2018-InterlacingFamiliesIV}, and later were developed into an independent theory including many features parallel to classical free probability \cite{MSS-2022-FiniteFreeConvolution,Marcus-2021-PolynomialConvolutionsFiniteFreeProbability,AP-2018-CumulantsFiniteFreeConvolution,GM-2020-CrystallizationRandomMatrixOrbits,AGVP-2023-FiniteFreeCumulants}.\footnote{As some of these references discuss, the central definitions are already present, albeit in very different context, in much earlier literature on the geometry of polynomials \cite{Walsh-1922-LocationRootsPolynomials,Szego-1922-GraceWurzeln}.}

The key definitions of this theory are as follows.
\begin{definition}[Finite free convolutions]
    For complex univariate polynomials
    \begin{align*}
        p(x) &= \sum_{i=0}^n x^{n-i} (-1)^i a_i, \\
        q(x) &= \sum_{i=0}^n x^{n-i} (-1)^i b_i
    \end{align*}
    of degree at most $n$ with $a_i, b_i \in \bC$, the \emph{$n$th finite free additive convolution} of $p$ and $q$ is the polynomial
    \begin{align*}
        (p \boxplus_n q)(x) \colonequals \sum_{k=0}^n x^{n-k} (-1)^k \sum_{\substack{0 \leq i, j \leq k \\ i+j=k}} \frac{(n-i)!(n-j)!}{n!(n-k)!}a_ib_j,
    \end{align*}
    and the \emph{$n$th finite free multiplicative convolution} of $p$ and $q$ is the polynomial
    \begin{align*}
        (p \boxtimes_n q)(x) \colonequals \sum_{i=0}^n x^{n-i} (-1)^i \frac{a_ib_i}{\binom{n}{i}}.
    \end{align*}
\end{definition}

These operations have many equivalent definitions.
Their connection to permanents is clarified by expressions in terms of the roots of the polynomials.
The additive identity below is \cite[Theorem~2.11]{MSS-2022-FiniteFreeConvolution}, and the multiplicative identity follows from \cite[Lemma~2.6 and Theorem~2.12]{MSS-2022-FiniteFreeConvolution} by taking the two matrices involved there to be diagonal.
\begin{proposition}
    \label{prop:real-rooted-ffc}
    For monic real-rooted polynomials
    \begin{align*}
        p(x) &= \prod_{i=1}^n (x - \alpha_i), \\
        q(x) &= \prod_{i=1}^n (x - \beta_i)
    \end{align*}
    of degree $n$, their finite free additive convolution and finite free multiplicative convolution take the forms
    \begin{align*}
        (p \boxplus_n q)(x) &= \frac{1}{n!} \sum_{\pi \in \calS_n} \prod_{i=1}^n (x - \alpha_i - \beta_{\pi(i)}),\\
        (p \boxtimes_n q)(x) &= \frac{1}{n!} \sum_{\pi \in \calS_n} \prod_{i=1}^n (x - \alpha_i \beta_{\pi(i)}).
    \end{align*}
\end{proposition}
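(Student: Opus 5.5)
The plan is to verify both identities by expanding the right-hand sides in powers of $x$ and matching coefficients with the coefficient definitions of $\boxplus_n$ and $\boxtimes_n$. Here $a_i = e_i(\aalpha)$ and $b_i = e_i(\bbeta)$ are the elementary symmetric polynomials of the roots, since $p(x) = \sum_i x^{n-i}(-1)^i e_i(\aalpha)$.

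\emph{Multiplicative case.} For a fixed $\pi$, the coefficient of $(-1)^k x^{n-k}$ in $\prod_{i}(x - \alpha_i\beta_{\pi(i)})$ is
\[
\sum_{|S|=k} \prod_{i\in S}\alpha_i\beta_{\pi(i)}.
\]
Averaging over $\pi \in \calS_n$, a fixed $k$-set $S$ is sent to each $k$-set $T$ by the same number of permutations, namely a fraction $1/\binom{n}{k}$ of them. So the average of $\prod_{i\in S}\beta_{\pi(i)}$ equals $e_k(\bbeta)/\binom nk$. Summing over $S$ then gives $e_k(\aalpha)e_k(\bbeta)/\binom nk = a_kb_k/\binom nk$, which matches the definition of $p\boxtimes_n q$.

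\emph{Additive case.} Write $x - \alpha_i - \beta_{\pi(i)} = (x-\alpha_i) - \beta_{\pi(i)}$ and expand the product over subsets $T$, where $T$ is the set of indices at which the $\beta$ term is chosen:
\[
\prod_i (x - \alpha_i - \beta_{\pi(i)}) = \sum_{T\subseteq[n]} (-1)^{|T|}\prod_{i\in T}\beta_{\pi(i)} \prod_{i\notin T}(x-\alpha_i).
\]
Averaging over $\pi$ replaces $\prod_{i\in T}\beta_{\pi(i)}$ by $b_j/\binom nj$, where $j = |T|$. It then remains to evaluate
\[
\sum_{|T|=j}\prod_{i\notin T}(x-\alpha_i).
\]
This sum equals $\frac{1}{j!}\,p^{(j)}(x)$, the normalized $j$th derivative of $p$; this is the standard identity obtained by differentiating the product $j$ times. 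Expanding
\[
\frac{1}{j!}\,p^{(j)}(x) = \sum_i (-1)^i a_i \binom{n-i}{j} x^{n-i-j},
\]
the coefficient of $(-1)^k x^{n-k}$ in the averaged sum, with $k = i+j$, is
\[
\sum_{i+j=k} a_i b_j\,\frac{\binom{n-i}{j}}{\binom nj}.
\]
A direct factorial computation checks that this equals the stated coefficient:
\[
\frac{\binom{n-i}{j}}{\binom{n}{j}} = \frac{(n-i)!\,j!\,(n-j)!}{j!\,(n-i-j)!\,n!} = \frac{(n-i)!\,(n-j)!}{n!\,(n-k)!}.
\]

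The main obstacle, if any, is bookkeeping the signs and the binomial normalization in the additive case. The key steps are the derivative identity and the symmetry of averaging over $\calS_n$. Alternatively, one could simply cite \cite[Theorem~2.11]{MSS-2022-FiniteFreeConvolution} and \cite[Lemma~2.6 and Theorem~2.12]{MSS-2022-FiniteFreeConvolution} applied to diagonal matrices, but the direct computation above is self-contained.
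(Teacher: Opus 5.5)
Your proof is correct. Each step checks out:
\begin{itemize}
\item The fact that $\pi(S)$ is uniformly distributed over $k$-subsets gives the factor $1/\binom{n}{k}$ in both cases.
\item The identity $\sum_{|T|=j}\prod_{i\notin T}(x-\alpha_i)=p^{(j)}(x)/j!$ holds.
\item The final check $\binom{n-i}{j}/\binom{n}{j}=\frac{(n-i)!(n-j)!}{n!(n-k)!}$ matches the coefficient in the definition of $\boxplus_n$, and the signs $(-1)^{i}(-1)^{j}=(-1)^k$ are handled correctly.
\end{itemize}

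The paper gives no argument of its own for this proposition. It imports both formulas from \cite{MSS-2022-FiniteFreeConvolution}: Theorem~2.11 there for $\boxplus_n$, and Lemma~2.6 with Theorem~2.12, specialized to diagonal matrices, for $\boxtimes_n$. In that framework the finite free convolutions are expected characteristic polynomials under random conjugation of one matrix, and for diagonal inputs the expectation reduces to the average over $\pi \in \calS_n$.

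Your route is a genuinely different, direct coefficient match. It is self-contained and elementary. It also makes visible something the statement hides: real-rootedness plays no role, since your computation proves the identities for monic polynomials with arbitrary complex roots. The cited route is less elementary, but it places the formulas inside the random-matrix picture of \cite{MSS-2022-FiniteFreeConvolution}, where they hold for general symmetric matrices rather than only diagonal ones.
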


Our second main result is the following monotonicity inequality for the finite free additive and multiplicative convolutions of real-rooted polynomials.
We give the short proof deriving these from Theorem~\ref{thm:perm-inequality} immediately.
Below we write $\oone_n \in \bR^n$ for the vector all of whose entries equal 1 (so that $\JJ_n = \oone_n\oone_n^{\top}$).
\begin{theorem}\label{thm:ffc-inequality}
    Let $p(x), q(x)$ be monic real-rooted polynomials of degree $n$. Then, for every $x \in \bR$,
    \begin{align*}
        (p \boxplus_n q)^2(x) &\ge (p^2 \boxplus_{2n} q^2)(x),\\
        (p \boxtimes_n q)^2(x) &\ge (p^2 \boxtimes_{2n} q^2)(x).
    \end{align*}
\end{theorem}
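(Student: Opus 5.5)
We derive both inequalities from Theorem~\ref{thm:perm-inequality}, using the root formulas of Proposition~\ref{prop:real-rooted-ffc}. Fix $x \in \bR$. For the additive convolution, set
\[
\AA_{i,j} \colonequals x - \alpha_i - \beta_j .
\]
This can be written as $\AA = \UU\VV^\top$ with rows $\UU_i = (x-\alpha_i,\, 1)$ and $\VV_j = (1,\, -\beta_j)$, so $\AA$ has rank at most $2$. For the multiplicative convolution, set
\[
\AA_{i,j} \colonequals x - \alpha_i\beta_j ,
\]
with rows $\UU_i = (x,\, -\alpha_i)$ and $\VV_j = (1,\, \beta_j)$, again of rank at most $2$. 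In both cases Proposition~\ref{prop:real-rooted-ffc} gives $(p \boxplus_n q)(x) = \per(\AA)/n!$, respectively $(p \boxtimes_n q)(x) = \per(\AA)/n!$.

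Next we identify the right-hand side. The polynomial $p^2$ is monic and real-rooted of degree $2n$, with root list $\alpha_1,\alpha_1,\dots,\alpha_n,\alpha_n$; likewise $q^2$ has each $\beta_j$ doubled. Applying Proposition~\ref{prop:real-rooted-ffc} at degree $2n$ gives
\[
(p^2 \boxplus_{2n} q^2)(x) = \frac{1}{(2n)!}\per(\bm B),
\]
where $\bm B$ is the $2n \times 2n$ matrix whose $(i,a),(j,b)$ entry equals $x - \alpha_i - \beta_j$ for $a,b \in \{1,2\}$. That is, $\bm B = \AA \otimes \JJ_2$, using the ordering of the doubled roots that the Kronecker product induces. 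Since the permanent is invariant under simultaneous row and column permutations, the choice of ordering does not matter. The multiplicative case is identical.

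Theorem~\ref{thm:perm-inequality} now gives
\[
\frac{\per^2(\AA)}{(n!)^2} \;\ge\; \frac{1}{\binom{2n}{n}(n!)^2}\per(\AA \otimes \JJ_2) \;=\; \frac{\per(\AA\otimes\JJ_2)}{(2n)!},
\]
which is exactly $(p\boxplus_n q)^2(x) \ge (p^2\boxplus_{2n} q^2)(x)$, and in the same way $(p\boxtimes_n q)^2(x) \ge (p^2\boxtimes_{2n} q^2)(x)$.

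There is no serious obstacle. The points that need care are:
\begin{itemize}
\item checking the rank-two factorizations, which must allow negative entries; this is precisely why Bang's Theorem~\ref{thm:bang} does not suffice;
\item correctly matching the doubled root lists to $\AA \otimes \JJ_2$;
\item the normalization identity $\binom{2n}{n}(n!)^2 = (2n)!$.
\end{itemize}
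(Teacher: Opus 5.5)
Your proposal is correct and follows the paper's proof essentially verbatim. Both write each convolution as $\per(\AA)/n!$ for a rank-two $\AA$, identify the degree-$2n$ convolution of $p^2, q^2$ with $\per(\AA\otimes\JJ_2)/(2n)!$, and apply Theorem~\ref{thm:perm-inequality} using $\binom{2n}{n}(n!)^2=(2n)!$.

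One small quibble concerns your side remark on Bang's theorem. It would not suffice even for non-negative $\AA$, because its constant $2^{-2n}$ is strictly smaller than $1/\binom{2n}{n}$; the sign condition is not the only obstacle.
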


\begin{proof}
    Let $p(x) = \prod_{i=1}^n (x - \alpha_i)$ and $q(x) = \prod_{i=1}^n (x - \beta_i)$ be two monic real-rooted polynomials of degree $n$. Let $\aalpha \colonequals (\alpha_1, \dots, \alpha_n)^\top, \bbeta \colonequals (\beta_1, \dots, \beta_n)^\top \in \bR^n$ be the vectors of roots of $p$ and $q$.

    First we prove the additive convolution inequality. By Proposition~\ref{prop:real-rooted-ffc},
    \begin{align*}
        (p \boxplus_n q)(x) &= \frac{1}{n!} \sum_{\pi \in \calS_n} \prod_{i=1}^n (x - \alpha_i - \beta_{\pi(i)})\\
        &= \frac{1}{n!} \per\left(
            x\JJ_n - \aalpha \oone_n^\top - \oone_n \bbeta^\top
        \right).
    \end{align*}
    Similarly, since $p^2(x) = \prod_{i=1}^n (x - \alpha_i)^2$ and $q^2 = \prod_{i=1}^n (x - \beta_i)^2$ are monic real-rooted polynomials of degree $2n$ with root vectors $\begin{pmatrix}
        \aalpha\\
        \aalpha
    \end{pmatrix}, \begin{pmatrix}
        \bbeta\\
        \bbeta
    \end{pmatrix} \in \bR^{2n}$, we have
    \begin{align*}
        (p^2 \boxplus_{2n} q^2)(x) &= \frac{1}{(2n)!} \per\left(
            x\JJ_{2n} - \begin{pmatrix}
        \aalpha\\
        \aalpha
    \end{pmatrix} \oone_{2n}^\top - \oone_{2n} \begin{pmatrix}
        \bbeta\\
        \bbeta
    \end{pmatrix}^\top
        \right)\\
        &= \frac{1}{(2n)!}\per\left(\left(x\JJ_n - \aalpha \oone_n^\top - \oone_n \bbeta^\top\right) \otimes \JJ_2\right).
    \end{align*}
    Observe that $x\JJ_n - \aalpha \oone_n^\top - \oone_n \bbeta^\top = (x \oone_n - \aalpha) \oone_n^\top - \oone_n \bbeta^\top$ has rank at most $2$. By Theorem~\ref{thm:perm-inequality}, we conclude that
    \begin{align*}
        (p \boxplus_n q)^2(x) &= \frac{1}{(n!)^2} \per^2\left(
            x\JJ_n - \aalpha \oone_n^\top - \oone_n \bbeta^\top
        \right)\\
        &\ge \frac{1}{(2n)!}\per\left(\left(x\JJ_n - \aalpha \oone_n^\top - \oone_n \bbeta^\top\right) \otimes \JJ_2\right)\\
        &= (p^2 \boxplus_{2n} q^2)(x).
    \end{align*}

    Next we prove the multiplicative convolution inequality. By Proposition~\ref{prop:real-rooted-ffc},\begin{align*}
        (p \boxtimes_n q)(x) &= \frac{1}{n!} \sum_{\pi \in \calS_n} \prod_{i=1}^n (x - \alpha_i \beta_{\pi(i)})\\
        &= \frac{1}{n!} \per\left(
            x\JJ_n - \aalpha \bbeta^\top
        \right).
    \end{align*}
    Similarly, since $p^2(x) = \prod_{i=1}^n (x - \alpha_i)^2$ and $q^2(x) = \prod_{i=1}^n (x - \beta_i)^2$ are monic real-rooted polynomials of degree $2n$ with root vectors $\begin{pmatrix}
        \aalpha\\
        \aalpha
    \end{pmatrix}$ and $\begin{pmatrix}
        \bbeta\\
        \bbeta
    \end{pmatrix}$, we have
    \begin{align*}
        (p^2 \boxtimes_{2n} q^2)(x) &= \frac{1}{(2n)!} \per\left(
            x\JJ_{2n} - \begin{pmatrix}
        \aalpha\\
        \aalpha
    \end{pmatrix}  \begin{pmatrix}
        \bbeta\\
        \bbeta
    \end{pmatrix}^\top
        \right)\\
        &= \frac{1}{(2n)!}\per\left(\left(x\JJ_n - \aalpha \bbeta^\top\right) \otimes \JJ_2\right).
    \end{align*}
    Observe that $x\JJ_n - \aalpha \bbeta^\top $ has rank at most $2$. By Theorem~\ref{thm:perm-inequality}, we conclude that
    \begin{align*}
        (p \boxtimes_n q)^2(x) &= \frac{1}{(n!)^2} \per^2\left(
            x\JJ_n - \aalpha \bbeta^\top
        \right)\\
        &\ge \frac{1}{(2n)!}\per\left(\left(x\JJ_n - \aalpha  \bbeta^\top\right) \otimes \JJ_2\right)\\
        &= (p^2 \boxtimes_{2n} q^2)(x). \qedhere
    \end{align*}
\end{proof}

\begin{remark}
As we state precisely in Corollary~\ref{cor:ffc-inequality-SoS},
  this proof and the sum-of-squares certificate obtained in 
  Theorem~\ref{thm:perm-inequality-SoS} provide 
  sum-of-squares certificates of the inequalities above 
  in terms of the $x$ and the roots of the polynomials $p$ and $q$.
\end{remark}

One corollary of this relevant to the relationship between finite and classical free probability is as follows.
Write $\maxroot(p)$ for the largest root of a real-rooted polynomial $p$ and $\mu_p$ for the empirical distribution of roots of $p$.
For compactly supported probability measures $\mu$ and $\nu$, write $\mu \boxplus \nu$ for their classical free additive convolution \cite{Voiculescu-1986-AdditionNoncommuting} and $\mu \boxtimes \nu$ for their classical free multiplicative convolution, the latter provided one of them is supported on $[0, \infty)$ \cite{Voiculescu-1987-MultiplicationNoncommuting}.
Finally, for a compactly supported probability measure $\mu$, write $\maxsupp(\mu)$ for the largest value in the support of $\mu$.

We first recall the following relationships between the finite and classical free convolutions.
The first has been stated in and follows directly from prior work.
\begin{proposition}[\cite{MSS-2022-FiniteFreeConvolution}, Theorem~4.2 of \cite{Srivastava-2024-RamanujanSurvey}]
    \label{prop:additive-finite-to-free-endpoint}
    Let $p$ and $q$ be monic real-rooted polynomials of degree $n$.
    Then,
    \begin{align*}
        \maxroot(p \boxplus_n q) \leq \maxsupp(\mu_p \boxplus \mu_q).
    \end{align*}
\end{proposition}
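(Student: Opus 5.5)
This follows from the Marcus--Spielman--Srivastava transform bounds for $\boxplus_n$ together with Voiculescu's subordination description of $\maxsupp(\mu\boxplus\nu)$. Throughout, write $G_\mu(z)=\int \frac{d\mu(t)}{z-t}$ for the Cauchy transform. Then $G_{\mu_p}(z)=\frac{1}{n}\frac{p'(z)}{p(z)}$, so for $z>\maxroot(p)$ we have $G_{\mu_p}(z)=\frac1n(\log p)'(z)$.

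The finite tool is the max-root bound of \cite{MSS-2022-FiniteFreeConvolution}. Let $\calK_p(w)$ be the inverse of $G_{\mu_p}$ restricted to $(\maxroot(p),\infty)$, a decreasing bijection onto $(0,\infty)$, and let $\calR_p(w)=\calK_p(w)-1/w$. Their Theorem~1.2 (the finite free analogue of $R$-transform additivity) gives, for every $w>0$,
\[
\maxroot(p\boxplus_n q)\le \calK_p(w)+\calK_q(w)-\frac{n}{w}\cdot\frac1n\,,
\]
which in the normalization above reads
\[
\maxroot(p\boxplus_n q)\le \calK_{\mu_p}(w)+\calK_{\mu_q}(w)-\frac1w \qquad (w>0).
\]
So the finite side is bounded by the function $h(w):=\calK_{\mu_p}(w)+\calK_{\mu_q}(w)-1/w$ at every positive $w$.

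The free side I handle with the subordination and $R$-transform picture. For compactly supported $\mu$, $G_\mu$ is strictly decreasing on $(\maxsupp\mu,\infty)$, and its inverse $\calK_\mu$ is defined on $(0,G_\mu(\maxsupp\mu^+))$. Voiculescu's additivity says
\[
\calK_{\mu\boxplus\nu}(w)=\calK_\mu(w)+\calK_\nu(w)-\frac1w
\]
for small $w>0$. It remains to identify $\maxsupp(\mu_p\boxplus\mu_q)$ with $\min_{w>0}h(w)$. The right-hand side is exactly $h(w)$, which is defined for all $w>0$ since $\mu_p,\mu_q$ have atoms and hence $G\to\infty$ at the edge. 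The function $h$ tends to $+\infty$ as $w\to 0^+$, because of the $1/w$ terms: $\calK_\mu(w)\approx 1/w$. It also tends to $+\infty$ as $w\to\infty$, since then each $\calK\to\maxroot$ while $-1/w\to 0$; note $h(w)\ge\maxroot(p)+\maxroot(q)-1/w$. Hence $h$ has a minimum $w^*$. The claim is that on $(0,w^*)$ the map $h$ is the inverse of $G_{\mu_p\boxplus\mu_q}$ on $(\maxsupp,\infty)$, and that the edge of the support is the critical value $h(w^*)$. This is the standard characterization: the analytic continuation of $\calK_{\mu\boxplus\nu}$ along the positive axis stays valid while $h'<0$, and at the first critical point the inverse function $G_{\mu\boxplus\nu}$ ceases to be real-analytic, so the edge sits there.

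Combining the two bounds gives
\[
\maxroot(p\boxplus_n q)\le \inf_{w>0}h(w)=h(w^*)=\maxsupp(\mu_p\boxplus\mu_q).
\]

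The main obstacle is rigorously justifying $\maxsupp(\mu\boxplus\nu)=\min_{w>0}h(w)$. To do this I would use subordination: there are analytic $\omega_1,\omega_2$ with $G_{\mu\boxplus\nu}(z)=G_\mu(\omega_1(z))=G_\nu(\omega_2(z))$ and $\omega_1+\omega_2=z+1/G_{\mu\boxplus\nu}(z)$. These are real and increasing beyond the edge and stay to the right of the respective supports. Setting $w=G_{\mu\boxplus\nu}(z)$ gives $z=h(w)$ for all $z>\maxsupp(\mu\boxplus\nu)$. This yields $\maxsupp(\mu\boxplus\nu)\ge\inf h$ directly, because $h$ takes every value above the edge.

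For the inequality we actually need, $\maxsupp\le h(w)$ for all $w$, I would instead argue as follows. Since $h=z\mapsto$ the inverse near $0$ and $h$ is real-analytic on $(0,\infty)$, $h$ must be monotone decreasing until the edge. If the edge were strictly above $\min h$, then $G_{\mu\boxplus\nu}$ would extend analytically and real-valued past the edge through $h^{-1}$. That contradicts the edge being in the support, by Stieltjes inversion: the density would be zero just below the edge. Alternatively, one can simply cite the known statement that $\maxroot(p\boxplus_n q)\le\maxsupp$ as in \cite{Srivastava-2024-RamanujanSurvey}, Theorem~4.2.
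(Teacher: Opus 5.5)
Your two ingredients are the right ones: the MSS bound $\maxroot(p\boxplus_n q)\le h(w)$ for every $w>0$, and Voiculescu's $R$-transform additivity. They are presumably what lies behind the references the paper cites; the paper gives no argument of its own here. But the free-side part of your proposal contains a false claim, and the logic runs in the wrong direction.

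First, $h$ does not tend to $+\infty$ as $w\to\infty$. As you yourself observe, $\calK_{\mu_p}(w)\to\maxroot(p)$ and $\calK_{\mu_q}(w)\to\maxroot(q)$, so $h(w)\to\maxroot(p)+\maxroot(q)$, and a minimizer $w^*$ need not exist. For $p=q=x^2-1$ one computes $h(w)=\sqrt{4+w^{-2}}$. This is strictly decreasing, with unattained infimum $2=\maxsupp(\mu_p\boxplus\mu_p)$ (the arcsine law on $[-2,2]$). So the critical-point characterization and the equality $\inf_w h(w)=h(w^*)$ fail in general, and the Stieltjes-inversion argument built on them does not go through.

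Second, the inequality you call ``the inequality we actually need,'' namely $\maxsupp(\mu_p\boxplus\mu_q)\le h(w)$ for all $w$, is not needed at all. Combined with $\maxroot(p\boxplus_n q)\le h(w)$, it gives no comparison between $\maxroot(p\boxplus_n q)$ and $\maxsupp(\mu_p\boxplus\mu_q)$. Your chain requires the opposite direction, $\inf_{w>0}h(w)\le\maxsupp(\mu_p\boxplus\mu_q)$. That is exactly what your subordination paragraph already shows.

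So the proof is repaired by discarding the minimizer and the reverse inequality and keeping only that step. Write $E=\maxsupp(\mu_p\boxplus\mu_q)$ and $G=G_{\mu_p\boxplus\mu_q}$. You can also avoid the unproved claim that the subordination functions stay to the right of the supports, as follows.

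\begin{itemize}
\item $G$ is a strictly decreasing real-analytic bijection from $(E,\infty)$ onto $(0,G(E^+))$, so its inverse is real-analytic there.
\item $h$ is real-analytic on all of $(0,\infty)$, because $\mu_p$ and $\mu_q$ have atoms at their largest roots.
\item The two functions agree for small $w>0$ by $R$-transform additivity, hence on all of $(0,G(E^+))$ by the identity theorem for real-analytic functions.
\end{itemize}

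Then for every $z>E$, taking $w=G(z)$ gives $\maxroot(p\boxplus_n q)\le h(G(z))=z$. Letting $z\downarrow E$ proves the proposition.
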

\noindent
The second, to the best of our knowledge, has not been stated directly before, but follows easily from existing results, essentially using a trick to reduce to Proposition~\ref{prop:additive-finite-to-free-endpoint}.
\begin{proposition}
    \label{prop:multiplicative-finite-to-free-endpoint}
    Let $p$ and $q$ be monic real-rooted polynomials of degree $n$, and suppose that at least one of them has only non-negative roots.
    Then, $p \boxtimes_n q$ is real-rooted and
    \begin{align*}
        \maxroot(p \boxtimes_n q) \leq \maxsupp(\mu_p \boxtimes \mu_q).
    \end{align*}
\end{proposition}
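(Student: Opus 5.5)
The plan has two parts. The first is real-rootedness, which I take from existing results. The second is a comparison of largest roots, obtained by passing to a large-$m$ limit in which finite free convolution becomes free convolution.

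\emph{Real-rootedness.} By the symmetry of $\boxtimes_n$, assume $q$ has only non-negative roots $\beta_i \geq 0$. Then $p \boxtimes_n q$ is real-rooted by the classical Szeg\H{o}/Walsh multiplicative composition theorem \cite{Szego-1922-GraceWurzeln,Walsh-1922-LocationRootsPolynomials}, in the form recorded in \cite{MSS-2022-FiniteFreeConvolution}. Equivalently, via Proposition~\ref{prop:real-rooted-ffc} and \cite[Theorem~2.12]{MSS-2022-FiniteFreeConvolution}, $p \boxtimes_n q$ is the expected characteristic polynomial of $\bm B^{1/2}\bm Q \bm A \bm Q^\top \bm B^{1/2}$ for Haar-random orthogonal $\bm Q$. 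Here $\bm A = \diag(\aalpha)$ and $\bm B = \diag(\bbeta) \succeq 0$.

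\emph{Reduction to the additive case.} The idea is to exponentiate and write multiplicative convolution with a non-negatively rooted polynomial as a limit of additive convolutions.
\begin{enumerate}
\item First assume both $p$ and $q$ have strictly positive roots. Replace each root by its logarithm and use the known small-time/large-$n$ correspondence $\exp(\log \bm A + \log \bm B) \approx \bm A \bm B$ (Lie--Trotter). This is too lossy, so instead I would invoke directly the finite $S$-transform bound for $\boxtimes_n$.
\item Concretely, \cite{MSS-2022-FiniteFreeConvolution} proves $\maxroot(p \boxplus_n q) \leq \maxroot$ of the $\calR$-transform combination, and this quantity is in turn at most $\maxsupp(\mu_p \boxplus \mu_q)$ (Proposition~\ref{prop:additive-finite-to-free-endpoint}). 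The same paper proves the analogous multiplicative statement: with $\mathcal M_p$ the finite Cauchy-type transform, $\maxroot(p \boxtimes_n q)$ is bounded by the value obtained by multiplying the inverse transforms.
\item I would compare that finite quantity with the free $S$-transform expression characterizing the right endpoint of $\mu_p \boxtimes \mu_q$. The finite transforms dominate the free ones in the same way as in the additive proof of \cite[Theorem~4.2]{Srivastava-2024-RamanujanSurvey}. This yields $\maxroot(p\boxtimes_n q) \leq \maxsupp(\mu_p \boxtimes \mu_q)$.
\end{enumerate}

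\emph{General signs.} If $p$ has roots of both signs, shift $p$ by a constant $c$ so that its roots become positive. One checks that $p(\cdot + c)$ corresponds under $\boxtimes_n$ to adding $c\,\bm B$. I would then combine this with a perturbation $\beta_i \mapsto \beta_i + \epsilon$ and a continuity argument as $\epsilon \to 0$. On the free side I would use that $\maxsupp$ is continuous under these perturbations, since all the measures are compactly supported.

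\emph{Main obstacle.} The hard step is the transform comparison in the multiplicative setting. The finite $S$-transform bound must hold for all points above the largest root, and I expect matching it with the free subordination characterization of the right endpoint of $\mu_p \boxtimes \mu_q$ to require care. If that step resists, my fallback is to repeat every root $m$ times and show that $\maxroot$ is monotone non-decreasing along $m = 2^k$. For that I would use the pointwise inequality of Theorem~\ref{thm:ffc-inequality}, together with interlacing of the relevant real-rooted families. I would then pass to the limit via the known convergence of $\boxtimes_{mn}$ to $\boxtimes$ and upper semicontinuity of the top of the support along this specific sequence.
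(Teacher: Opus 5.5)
\paragraph{Gap in the main route.} The gap is in the case the proposition is really about, where only $q$ is assumed to have non-negative roots. The finite $S$-transform inequality you invoke from \cite{MSS-2022-FiniteFreeConvolution} concerns two polynomials that \emph{both} have only non-negative roots. This is not a technicality: if $p$ has roots of both signs, then $\mathcal{M}_p(x)=\frac{1}{n}\sum_i\frac{\alpha_i}{x-\alpha_i}$ need not be monotone on $(\maxroot(p),\infty)$, so $\mathcal{M}_p^{-1}$ is not even defined.

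Your shift does not reduce the mixed-sign case to that one. Moving the roots of $p$ by $c$ turns $\bm B^{1/2}\bm Q\bm A\bm Q^\top\bm B^{1/2}$ into $\bm B^{1/2}\bm Q\bm A\bm Q^\top\bm B^{1/2}+c\bm B$. On the free side it turns $b^{1/2}ab^{1/2}$ into $b^{1/2}ab^{1/2}+cb$. Since $\bm B$ is not scalar, neither is a translate of the original object, so a bound for the shifted pair does not transfer to $\maxroot(p\boxtimes_n q)$ or to $\maxsupp(\mu_p\boxtimes\mu_q)$. The perturbation $\beta_i\mapsto\beta_i+\epsilon$ only removes zero roots of $q$; it does nothing about negative roots of $p$.

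You also leave the transform comparison open. That step is doable when both measures live on $[0,\infty)$: continue $\mathcal{M}^{-1}_{\mu\boxtimes\nu}(w)=\frac{w}{w+1}\mathcal{M}^{-1}_{\mu}(w)\mathcal{M}^{-1}_{\nu}(w)$ along the real interval of $w$ corresponding to $x>\maxsupp(\mu\boxtimes\nu)$, and let $x$ decrease to the edge. Even with that done, your plan proves the inequality only when both $p$ and $q$ have non-negative roots.

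\paragraph{Fallback.} The fallback does not close the gap. Theorem~\ref{thm:ffc-inequality} does give $\maxroot(p\boxtimes_n q)\le\maxroot(p^{2^k}\boxtimes_{2^kn}q^{2^k})$. However, weak convergence of the root distributions to $\mu_p\boxtimes\mu_q$ only yields $\liminf_k\maxroot(p^{2^k}\boxtimes_{2^kn}q^{2^k})\ge\maxsupp(\mu_p\boxtimes\mu_q)$. It cannot exclude roots that stay outside the support. The upper bound you call ``upper semicontinuity'' is exactly this proposition applied to $p^{2^k}$ and $q^{2^k}$, which is how the paper uses the proposition in its corollary.

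\paragraph{Missing idea.} You need a reduction to the \emph{additive} endpoint bound that never uses the signs of the $\alpha_i$. For $\beta_i>0$, set $c_x(y)=\prod_i(y-x/\beta_i)$, $p^\vee(y)=(-1)^np(-y)$ and $h_x=c_x\boxplus_n p^\vee$. The root formulas give $(p\boxtimes_n q)(x)=(-1)^n(\prod_i\beta_i)\,h_x(0)$. For $x>E=\maxsupp(\mu_p\boxtimes\mu_q)$, the operator $xb^{-1}-a=b^{-1/2}(xI-b^{1/2}ab^{1/2})b^{-1/2}$ is positive and has law $\mu_{c_x}\boxplus\mu_{p^\vee}$. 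Proposition~\ref{prop:additive-finite-to-free-endpoint}, applied to the reflected polynomials, then gives $\minroot(h_x)>0$. Hence $(p\boxtimes_n q)(x)>0$ for every $x>E$. Zero roots of $q$ are handled afterwards by your $\epsilon$-perturbation.
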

\noindent
We give the proof in Section~\ref{sec:mult-endpoint}.

Our corollary of Theorem~\ref{thm:ffc-inequality} further shows that sequences of finite free convolutions of powers of polynomials interpolate between the endpoints of these inequalities.
\begin{corollary}
    Let $p$ and $q$ be monic real-rooted polynomials of degree $n$.
    Then, the following hold:
    \begin{enumerate}
    \item The sequence $\maxroot(p^{2^k} \boxplus_{2^k n} q^{2^k})$ over $k \geq 0$ is monotonically non-decreasing and converges to $\maxsupp(\mu_p \boxplus \mu_q)$.
    \item If either $p$ or $q$ has only non-negative roots, then the sequence $\maxroot(p^{2^k} \boxtimes_{2^k n} q^{2^k})$ over $k \geq 0$ is monotonically non-decreasing and converges to $\maxsupp(\mu_p \boxtimes \mu_q)$.
    \end{enumerate}
\end{corollary}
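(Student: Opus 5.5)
The plan is to derive both parts from Theorem~\ref{thm:ffc-inequality}, applied repeatedly, together with Propositions~\ref{prop:additive-finite-to-free-endpoint} and \ref{prop:multiplicative-finite-to-free-endpoint}. Set $r_k \colonequals p^{2^k} \boxplus_{2^k n} q^{2^k}$. Then $p^{2^k}$ and $q^{2^k}$ are monic, real-rooted, of degree $2^k n$, and satisfy $\mu_{p^{2^k}} = \mu_p$ and $\mu_{q^{2^k}} = \mu_q$.

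\textbf{Monotonicity.} Apply Theorem~\ref{thm:ffc-inequality} to $p^{2^k}, q^{2^k}$ at degree $2^k n$. This gives $r_k(x)^2 \geq r_{k+1}(x)$ for every $x \in \bR$. Finite free additive convolution preserves real-rootedness (for instance, through the permanent formula of Proposition~\ref{prop:real-rooted-ffc}, or by the results of \cite{MSS-2022-FiniteFreeConvolution}). So $r_k$ is monic and real-rooted, and hence $r_k > 0$ on $(\maxroot(r_k), \infty)$.

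Let $m = \maxroot(r_{k+1})$ and suppose $m < \maxroot(r_k)$. To get a contradiction, I would argue as follows. $r_{k+1}$ is monic of even degree, so it is positive after $m$, while $r_k^2 \ge r_{k+1}$ only bounds $r_{k+1}$ from above. That alone gives no contradiction, since both sides are non-negative. Consider instead $x = \maxroot(r_k)$. There $0 = r_k(x)^2 \geq r_{k+1}(x)$, so $r_{k+1}(x) \le 0$. But $r_{k+1} > 0$ to the right of its largest root, so $x \le m$.

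Hence $\maxroot(r_k) \le \maxroot(r_{k+1})$, which is the required monotonicity. The multiplicative case is identical, provided $p^{2^k} \boxtimes q^{2^k}$ is real-rooted. Proposition~\ref{prop:multiplicative-finite-to-free-endpoint} supplies this when one factor has non-negative roots, and squaring preserves non-negativity of roots.

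\textbf{Upper bound and convergence.} Proposition~\ref{prop:additive-finite-to-free-endpoint}, applied to $p^{2^k}, q^{2^k}$, gives $\maxroot(r_k) \leq \maxsupp(\mu_p \boxplus \mu_q)$, since the empirical root measures do not change. So the sequence is non-decreasing and bounded, and therefore converges. It remains to identify the limit.

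For this I would use the known fact that finite free convolution converges to free convolution as the degree grows with the root measures held fixed. Concretely, the finite free cumulants of $p^{2^k}$ converge to the free cumulants of $\mu_p$ \cite{AP-2018-CumulantsFiniteFreeConvolution,AGVP-2023-FiniteFreeCumulants}, so $\mu_{r_k} \to \mu_p \boxplus \mu_q$ weakly, and similarly in the multiplicative case. Weak convergence gives $\liminf_k \maxroot(r_k) \geq \maxsupp(\mu_p \boxplus \mu_q)$: any point of the support carries positive mass in a neighbourhood, so eventually $r_k$ has a root near it. Combined with the upper bound, this gives equality.

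\textbf{Main obstacle.} The limit identification is the main obstacle. It needs a weak-convergence statement for a fixed-measure, growing-degree sequence, and this must be quoted carefully in the multiplicative case with non-negative support, for example via the $S$-transform or moment convergence in \cite{AGVP-2023-FiniteFreeCumulants}. Moment convergence suffices here, since all measures involved are supported in a fixed compact interval.
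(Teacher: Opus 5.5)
Your proposal is correct and follows essentially the same route as the paper. You evaluate the inequality of Theorem~\ref{thm:ffc-inequality} at $\maxroot(r_k)$ to get monotonicity, bound the sequence above by Propositions~\ref{prop:additive-finite-to-free-endpoint} and~\ref{prop:multiplicative-finite-to-free-endpoint} using $\mu_{p^{2^k}} = \mu_p$, and identify the limit through weak convergence of root distributions from \cite{AP-2018-CumulantsFiniteFreeConvolution} and \cite{AGVP-2023-FiniteFreeCumulants}. One small correction: real-rootedness of $\boxplus_n$ comes from \cite{MSS-2022-FiniteFreeConvolution}, not from the permanent formula of Proposition~\ref{prop:real-rooted-ffc} alone.
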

\noindent
For the second result, we note that two issues arise if the non-negativity condition is not included.
First, it can be that $p \boxtimes_n q$ is no longer real-rooted even if $p$ and $q$ are (for instance, we have $(x^2-1)\boxtimes_2(x^2-1)=x^2+1$).
Second, the standard self-adjoint multiplicative free convolution on
  the real line used here assumes that at least one input measure is
  supported on $[0,\infty)$.

\begin{proof}
We give the proof for the additive free convolution; the multiplicative case follows by an identical argument.
Define $\rho \colonequals \maxroot(p \boxplus_n q)$.
Then, $(p \boxplus_n q)(\rho) = 0$, and so Theorem~\ref{thm:ffc-inequality} implies that $(p^2 \boxplus_{2n} q^2)(\rho) \leq 0$.
But, $p^2 \boxplus_{2n} q^2$ is monic and thus positive on sufficiently large inputs, so we find
\[ \maxroot(p \boxplus_n q) \leq \maxroot(p^2 \boxplus_{2n} q^2). \]
Iterating this, we find the chain of inequalities
\[ \maxroot(p \boxplus_n q) \leq \maxroot(p^2 \boxplus_{2n} q^2) \leq \maxroot(p^4 \boxplus_{4n} q^4) \leq  \cdots \leq \maxroot(p^{2^k} \boxplus_{2^k n} q^{2^k}). \]
Corollary~5.5 of \cite{AP-2018-CumulantsFiniteFreeConvolution} gives the weak convergence
\[   \mu_{p^{\ell}\boxplus_{\ell n}q^{\ell}}
  \xrightarrow[\ell \to \infty]{\text{(w)}}
  \mu_p\boxplus\mu_q. \]
This weak convergence implies that
\[ \maxsupp(\mu_p \boxplus \mu_q) \leq \liminf_{\ell \to \infty} \left\{\maxroot(p^{\ell}\boxplus_{\ell n}q^{\ell})\right\}, \]
since every neighborhood of $\maxsupp(\mu_p \boxplus \mu_q)$ has positive measure under $\mu_p \boxplus \mu_q$.
On the other hand, Proposition~\ref{prop:additive-finite-to-free-endpoint} applied to $p^{\ell}$ and $q^{\ell}$ gives
\[ \maxroot(p^{\ell}\boxplus_{\ell n}q^{\ell}) \leq \maxsupp(\mu_p \boxplus \mu_q) \]
for every $\ell \geq 1$, since $\mu_{p^{\ell}} = \mu_p$ and $\mu_{q^{\ell}} = \mu_q$.
It follows that the maximum roots converge to $\maxsupp(\mu_p \boxplus \mu_q)$ as $\ell \to \infty$, and hence also along the subsequence $\ell=2^k$.

If either $p$ or $q$ has only non-negative roots, then the same argument applies to the free multiplicative convolutions.
Indeed, Proposition~2.16 of \cite{Fujie-2025-RegularityConvergenceFiniteFreeConvolutions} shows that all of the finite free multiplicative convolutions involved are real-rooted.
Theorem~1.4 of \cite{AGVP-2023-FiniteFreeCumulants} gives the weak convergence of their empirical root distributions to $\mu_p \boxtimes \mu_q$.
The result then follows by using Proposition~\ref{prop:multiplicative-finite-to-free-endpoint} instead of Proposition~\ref{prop:additive-finite-to-free-endpoint}.
\end{proof}

\subsection{Proof Techniques and Role of Artificial Intelligence}
\label{sec:proof-techniques}

We arrived at the proof of Theorem~\ref{thm:perm-inequality}, our main contribution, by a combination of ``classical'' computer assistance, direct analysis by the authors, and assistance from recent AI systems.

The sum-of-squares proofs we give, which are sequences of (rather complicated) polynomial equalities rather than inequalities, were found by solving appropriate semidefinite programs (SDPs) and identifying conjectural exact forms of the numerical sum-of-squares proofs they provided, which could be made highly symmetric with appropriate regularizations.
We note that, once such exact conjectural identities with rational coefficients are identified, they are far less expensive to test in larger dimensions than it would be to continue solving larger SDPs.
Performing such tests (in our case for degrees through $n = 49$), we reached a high level of confidence in these conjectures.

AI then provided several of the key insights in proving these identities; we present a streamlined and reorganized version of a proof first discovered in an interactive session with GPT-5.2~Thinking and Gemini~3.1~Pro in February~2026.
This was achieved by our prompting these systems to use tools from representation theory to explain the symmetry in the proposed sum-of-squares proof and attempt a symbolic proof.

In addition to many smaller differences, several parts of the proof were originally phrased (as requested) in the language of representation theory, while we present an elementary and considerably simplified proof here for the sake of clarity.
We summarize the roles representation theory originally played below, which may illuminate the roles of some of the objects involved.
See Definition~\ref{def:hom} below for the definition of the $\calW_m$ spaces.
\begin{itemize}
\item Corollary~\ref{cor:cg} is an elementary rephrasing of the Clebsch-Gordan decomposition of $\calW_m^{\otimes 2}$, while Corollary~\ref{cor:Bm-even-decomp} is a consequence of the restricted Clebsch-Gordan decomposition of $\Sym^2(\calW_n)$.
After complexification, both decompositions are into irreducible representations of $\SU_2$, each of which has multiplicity $1$.
\item The transvectant $\tau_{2k}$ shows up in Theorem~\ref{thm:inner-product-decom} as it is the unique $\SU_2$-intertwiner (up to rescaling, and again after complexification) from $\Sym^2(\calW_n)$ to $\calW_{2n-4k}$.
\item Proposition~\ref{prop:transvectant-identity} was first established by realizing the right-hand side of the identity with an $\SU_2$-intertwiner, and using the uniqueness of the transvectant as the intertwiner from $\Sym^2(\calW_n)$ to $\calW_{2n-4k}$.
\end{itemize}

\subsection{Acknowledgements}

The authors would like to thank Nikhil Srivastava
and Jorge Garza-Vargas for a helpful discussion on the project.

\section{Preliminaries on Homogeneous Polynomials}

Our arguments will relate permanents and finite free convolutions to certain inner products of bivariate homogeneous polynomials.
We prepare by establishing some general theory around inner product spaces of homogeneous polynomials.

\begin{definition}[Homogeneous polynomials]
    \label{def:hom}
    We write $\mathbb{R}[x_1, \dots, x_d]^{\hom}_n$ for the space of real homogeneous polynomials in the variables $(x_1, \dots, x_d)$ of degree $n$.

    We will also use some special notation for the case $d = 2$.
    In this case, we always call the variables involved $x$ and $y$, so that such a polynomial is of the form $\sum_{k=0}^n a_k x^k y^{n-k}$, where $a_k \in \bR$ for $0 \le k \le n$.
    We use $\calW_n \colonequals \bR[x, y]_n^{\hom}$ to denote the space of these real bivariate homogeneous polynomials of degree $n$.
\end{definition}

\subsection{Apolar Inner Product Structure}

We will use the following inner product of homogeneous polynomials,  also sometimes called the \emph{Fischer} or \emph{Bombieri inner product}.
Up to normalization, this is the same as the definition in Equation~(2.3) of \cite{Reznick-1996-HomogeneousPolynomial}.
\begin{definition}[Apolar inner product]
    We define an inner product on $\bR[x_1, \dots, x_d]_n^{\hom}$ for each $d \geq 1$ and $n \geq 0$ as follows.
    First, on monomials, we set
    \[ \langle x_1^{a_1} \cdots x_d^{a_d}, x_1^{b_1} \cdots x_d^{b_d} \rangle \colonequals \left\{\begin{array}{ll} a_1! \cdots a_d! & \text{if } a_1 = b_1, \dots, a_d = b_d, \\ 0 & \text{otherwise} \end{array}\right\}. \]
    We then extend to arbitrary inner products $\langle f, g \rangle$ for $f, g \in \bR[x_1, \dots, x_d]_n^{\hom}$ bilinearly.
\end{definition}

The apolar inner product has a useful reinterpretation in terms of differentiation, with respect to which it has convenient algebraic properties as follows.
The following proposition comes from \cite{Reznick-1996-HomogeneousPolynomial}, where parts 1, 2, and 3 are in Theorem~2.11 and part 4 is Equation~(2.7).

\begin{proposition}[Differential interpretation of the apolar inner product]
    \label{prop:apolar-diff}
    For $f \in \mathbb{R}[x_1, \dots, x_d]$, write $f(\bm \partial) = f(\partial_{x_1}, \dots, \partial_{x_d})$ for the corresponding differential operator (with constant coefficients).
    The following statements hold:
    \begin{enumerate}
        \item If $f,g \in \bR[x_1, \dots, x_d]_n^{\hom}$, then
        \[ \langle f,g\rangle
            = f(\bm\partial)g \\
            = g(\bm\partial)f. \]
        In particular, both differential expressions above are constants.

        \item If $f \in \bR[x_1, \dots, x_d]_m^{\hom}$, $g \in \bR[x_1, \dots, x_d]_n^{\hom}$, and
        $h \in \bR[x_1, \dots, x_d]_{m + n}^{\hom}$, then
        \begin{align*}
            \langle fg,h\rangle
            =
            \left\langle
                g, f(\bm\partial)h
            \right\rangle.
        \end{align*}
        Thus, with respect to the apolar inner products on the relevant homogeneous components, the adjoint of multiplication by $f$ is the differential operator $f(\bm\partial)$.

        \item In particular, if $g \in \bR[x_1, \dots, x_d]_n^{\hom}$ and
        $h \in \bR[x_1, \dots, x_d]_{n + 1}^{\hom}$, then, for all $i \in [d]$,
        \[ \langle x_i g,h\rangle
            = \langle g,\partial_{x_i} h\rangle \]

        \item If $f \in \bR[x_1, \dots, x_d]_n^{\hom}$ and $(u_1, \dots, u_d)\in\mathbb{R}^d$, then
        \begin{align*}
            f(u_1, \dots, u_d) = \frac{1}{n!} \left\langle (u_1x_1 + \cdots + u_d x_d)^n,f(x_1, \dots, x_d)\right\rangle.
        \end{align*}
    \end{enumerate}
\end{proposition}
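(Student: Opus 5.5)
The statement to prove is Proposition~\ref{prop:apolar-diff}, and all four parts follow from computing on monomials and extending by bilinearity. The one basic fact I use is
\[ \partial^{\mathbf a} x^{\mathbf b} = \frac{\mathbf b!}{(\mathbf b-\mathbf a)!}\, x^{\mathbf b-\mathbf a} \quad \text{if } \mathbf a \le \mathbf b \text{ componentwise}, \qquad \partial^{\mathbf a} x^{\mathbf b} = 0 \text{ otherwise}. \]
Here $\partial^{\mathbf a} = \partial_{x_1}^{a_1}\cdots\partial_{x_d}^{a_d}$ and $\mathbf a! = a_1!\cdots a_d!$.

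\textbf{Part 1.} Take $|\mathbf a| = |\mathbf b| = n$. Then $\mathbf a \le \mathbf b$ forces $\mathbf a = \mathbf b$, so the formula gives $\partial^{\mathbf a} x^{\mathbf b} = \mathbf a!\,\delta_{\mathbf a,\mathbf b}$. This equals $\langle x^{\mathbf a}, x^{\mathbf b}\rangle$ and is a constant. Both $(f,g) \mapsto f(\bm\partial)g$ and the inner product are bilinear, so $\langle f,g\rangle = f(\bm\partial)g$ in general. The inner product is symmetric on monomials, hence symmetric, which gives $f(\bm\partial)g = g(\bm\partial)f$.

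\textbf{Part 2.} Since $(fg)(\bm\partial) = f(\bm\partial)\,g(\bm\partial)$ and constant-coefficient operators commute, Part 1 gives
\[ \langle fg,h\rangle = (fg)(\bm\partial)h = g(\bm\partial)\bigl(f(\bm\partial)h\bigr). \]
Here $f(\bm\partial)h$ is homogeneous of degree $n$, the same as $g$. Applying Part 1 again in degree $n$ gives $g(\bm\partial)\bigl(f(\bm\partial)h\bigr) = \langle g, f(\bm\partial)h\rangle$.

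\textbf{Part 3.} This is Part 2 with $f = x_i$.

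\textbf{Part 4.} By Part 1,
\[ \langle (u\cdot x)^n, f\rangle = f(\bm\partial)(u\cdot x)^n, \qquad u\cdot x := u_1x_1+\cdots+u_dx_d. \]
Check this on a monomial $f = x^{\mathbf a}$ with $|\mathbf a| = n$. Each derivative $\partial_{x_i}$ applied to a power of $u\cdot x$ brings down a factor $u_i$ and lowers the exponent by one. After all $n$ derivatives the result is
\[ \partial^{\mathbf a}(u\cdot x)^n = n!\,u^{\mathbf a}. \]
By linearity, $\langle (u\cdot x)^n, f\rangle = n!\,f(u)$, which rearranges to the stated identity.

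I do not expect a real obstacle. The only point needing care is the degree bookkeeping in Part 2, where Part 1 has to be applied in degree $n$ after $f(\bm\partial)$ has lowered the degree of $h$ from $m+n$ to $n$.
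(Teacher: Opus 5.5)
Your proof is correct, including the degree bookkeeping in Part~2 and the constant $n!$ in Part~4. The paper gives no argument of its own here. It cites Theorem~2.11 and Equation~(2.7) of \cite{Reznick-1996-HomogeneousPolynomial}, where the bracket is normalized differently: on degree-$n$ forms it is $\frac{1}{n!}$ times the present inner product, so that $[f,(u\cdot x)^n]=f(u)$.

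Your route is self-contained. You reduce everything to the single formula for $\partial^{\mathbf a}x^{\mathbf b}$. Part~2 then follows from the multiplicativity $(fg)(\bm\partial)=f(\bm\partial)g(\bm\partial)$ together with two applications of Part~1, in degrees $m+n$ and $n$. Part~4 follows from $\partial^{\mathbf a}(u\cdot x)^n=n!\,u^{\mathbf a}$.

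The citation buys brevity. Your argument verifies the constants directly in the paper's normalization, and this matters. This normalization is exactly the one that makes multiplication by $f$ and $f(\bm\partial)$ adjoint with no degree-dependent factor. With Reznick's bracket the same identity would read $[fg,h]=\frac{n!}{(m+n)!}[g,f(\bm\partial)h]$. The paper relies on the factor-free version repeatedly for the pair $D,\Omega$.
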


\subsection{Ideal-Harmonic Decomposition}

Proposition~\ref{prop:apolar-diff} has the following elegant consequence.
This follows, for instance, from the combination of Theorems~2.18 and~4.7 of \cite{Reznick-1996-HomogeneousPolynomial}.
\begin{proposition}[Ideal-harmonic decomposition]
    \label{prop:ideal-harmonic}
    Let $1 \leq m \leq n$ and $f \in \bR[x_1, \dots, x_d]_m^{\hom} \setminus \{0\}$.
    Define the subspaces of $\bR[x_1, \dots, x_d]_n^{\hom}$
    \begin{align*}
    \mathcal{I}_n = \mathcal{I}_{f,n} &\colonequals \{fg: g \in \bR[x_1, \dots, x_d]_{n - m}^{\hom}\}, \\
    \mathcal{H}_n = \mathcal{H}_{f,n} &\colonequals \{g \in \bR[x_1, \dots, x_d]_n^{\hom}: f(\bm \partial)g = 0\},
    \end{align*}
    respectively the homogeneous \emph{ideal} generated by $f$ and the \emph{$f$-harmonic} polynomials.
    Then, we have
    \[ \bR[x_1, \dots, x_d]_n^{\hom} = \mathcal{H}_{n} \oplus \mathcal{I}_{n}, \]
    and $\mathcal{H}_{n}$ and $\mathcal{I}_{n}$ are one another's orthogonal complements in $\bR[x_1, \dots, x_d]_n^{\hom}$ under the apolar inner product.
    Further, if $k$ is the greatest integer such that $k m \leq n$, then
    \[ \bR[x_1, \dots, x_d]_n^{\hom} = \mathcal{H}_n \oplus f\mathcal{H}_{n - m} \oplus \cdots \oplus f^k \mathcal{H}_{n - km}. \]
\end{proposition}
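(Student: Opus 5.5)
We prove the first decomposition from Proposition~\ref{prop:apolar-diff}, and then obtain the finer decomposition by induction on $n$.

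\textbf{Adjointness and orthogonality.} Write $M_f \colon \bR[x_1,\dots,x_d]^{\hom}_{n-m} \to \bR[x_1,\dots,x_d]^{\hom}_n$ for multiplication by $f$. By part~2 of Proposition~\ref{prop:apolar-diff}, its adjoint under the apolar inner product is the differential operator $f(\bm\partial)\colon \bR[x_1,\dots,x_d]^{\hom}_n \to \bR[x_1,\dots,x_d]^{\hom}_{n-m}$. For a linear map between finite-dimensional inner product spaces, the orthogonal complement of the image is the kernel of the adjoint. Hence
\[ \mathcal{I}_n^\perp = (\operatorname{im} M_f)^\perp = \ker f(\bm\partial) = \mathcal{H}_n. \]
Since the apolar form is positive definite (the monomials are orthogonal with positive norms $a_1!\cdots a_d!$), we get $\bR[x_1,\dots,x_d]^{\hom}_n = \mathcal{H}_n \oplus \mathcal{I}_n$ as an orthogonal direct sum. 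This holds for every $n \ge m$. We never need the hypothesis $f \neq 0$ here, except to note that $M_f$ is injective, because the polynomial ring is an integral domain.

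\textbf{Iteration.} We induct on $k = \lfloor n/m \rfloor$. If $n < m$, every polynomial of degree $n$ is killed by $f(\bm\partial)$, which has order $m > n$, so $\mathcal{H}_n$ is the whole space. That is the base case, with the decomposition being $\mathcal{H}_n$ alone. For $n \ge m$, the previous step applied in degree $n-m$ (or the base case if $n-m<m$) gives
\[ \bR^{\hom}_{n-m} = \mathcal{H}_{n-m} \oplus f\mathcal{H}_{n-2m} \oplus \cdots \oplus f^{k-1}\mathcal{H}_{n-km}. \]
Multiplying by $f$ is injective, so
\[ \mathcal{I}_n = f\,\bR^{\hom}_{n-m} = f\mathcal{H}_{n-m} \oplus \cdots \oplus f^{k}\mathcal{H}_{n-km}, \]
and this sum is direct because $M_f$ preserves directness of sums. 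Combining with $\bR^{\hom}_n = \mathcal{H}_n \oplus \mathcal{I}_n$ gives the stated decomposition.

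The main care point is the claim that this iterated decomposition is also orthogonal. The statement only asserts a direct sum, and directness follows from injectivity of $M_f$ as above. Orthogonality of the summands $f^i\mathcal{H}_{n-im}$ among themselves is not needed, and in fact it can fail for general $f$ even though each summand is orthogonal to $\mathcal{H}_n$. So the proof should claim directness only, not orthogonality. The genuinely substantive input is part~2 of Proposition~\ref{prop:apolar-diff}, which we take as given.
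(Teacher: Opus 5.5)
Your proof is correct and complete. You identify $\mathcal{I}_n^\perp = (\operatorname{im} M_f)^\perp = \ker f(\bm\partial) = \mathcal{H}_n$ via part~2 of Proposition~\ref{prop:apolar-diff}, then induct using injectivity of multiplication by $f$. This is the standard adjointness argument behind the result, which the paper does not prove but simply cites from Reznick.

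Your caution about orthogonality is also accurate, and it matches the paper, which proves mutual orthogonality only for $f = D$ on $\calB_m$ (Corollary~\ref{cor:cg}, via Proposition~\ref{prop:Omega-relations}). A concrete failure for general $f$: take $d = 2$, $f = x^2 + 2y^2$, and $h = 2x^2 - y^2 \in \mathcal{H}_2$. Then $\langle fh, f^2\rangle = \langle h, f(\bm\partial)f^2\rangle = \langle 2x^2 - y^2, 28x^2 + 104y^2\rangle = -96 \neq 0$, so $f\mathcal{H}_2 \not\perp f^2\mathcal{H}_0$ in degree $4$.
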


\subsection{\texorpdfstring{$2 \times 2$}{2-by-2} Matrices and Clebsch-Gordan Decomposition}

We will in particular apply the above theory to homogeneous polynomials in four variables, belonging to the spaces $\bR[x_1, x_2, y_1, y_2]_n^{\hom}$, which we view as being functions of a $2 \times 2$ matrix
\[ \left[\begin{array}{cc} x_1 & y_1 \\ x_2 & y_2 \end{array}\right]. \]
We consider the ideal-harmonic decomposition with respect to the \emph{determinant} polynomial,
\[ D = D(x_1, x_2, y_1, y_2) = x_1 y_2 - y_1 x_2. \]
We denote the associated differential operator, sometimes called \emph{Cayley's operator} or \emph{process}, by
\[ \Omega = D(\partial_{x_1}, \partial_{x_2}, \partial_{y_1}, \partial_{y_2}) = \partial_{x_1}\partial_{y_2} - \partial_{y_1}\partial_{x_2}. \]
Note that, for any $n \geq 2$, this is a linear operator $\Omega: \bR[x_1, x_2, y_1, y_2]_n^{\hom} \to \bR[x_1, x_2, y_1, y_2]_{n - 2}^{\hom}$, and on $n < 2$ it acts on $\bR[x_1, x_2, y_1, y_2]_n^{\hom}$ as the zero operator.

We further denote by $\calB_m$ the subspace of $\bR[x_1, x_2, y_1, y_2]_{2m}^{\hom}$ consisting of those polynomials that are separately homogeneous of degree $m$ in the pair $(x_1, y_1)$ and homogeneous of degree $m$ in the pair $(x_2, y_2)$.
Recalling that we denote by $\calW_m$ the homogeneous bivariate polynomials of degree $m$, we may also identify $\calB_m = \calW_m \otimes \calW_m$.

We then have $\Omega: \calB_m \to \calB_{m - 1}$ and, identifying $D$ with the operation of multiplication by $D$, $D: \calB_{m - 1} \to \calB_m$.
We define the kernels of each degree among these balanced polynomials,
\[ \calK_m \colonequals \ker(\Omega) \cap \calB_m = \calH_{D, 2m} \cap \calB_m. \]

Restricting Proposition~\ref{prop:ideal-harmonic} to $\calB_m \subset \bR[x_1, x_2, y_1, y_2]_{2m}^{\hom}$ then gives the following.
\begin{corollary}
    \label{cor:cg}
    For each $m \geq 1$,
    \[ \calB_m = \calK_m \oplus D\calK_{m - 1} \oplus \cdots \oplus D^m \calK_0, \]
    and all summand subspaces are mutually orthogonal under the apolar inner product.
\end{corollary}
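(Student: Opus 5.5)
We will deduce Corollary~\ref{cor:cg} from Proposition~\ref{prop:ideal-harmonic} applied with $d = 4$, $f = D$ (so $m = 2$) and degree $2m$ in place of $n$. The last statement of that proposition gives the orthogonal decomposition
\[ \bR[x_1,x_2,y_1,y_2]^{\hom}_{2m} = \calH_{D,2m} \oplus D\calH_{D,2m-2} \oplus \cdots \oplus D^m \calH_{D,0}. \]
The task is to intersect both sides with $\calB_m$ and check that the intersection distributes over the direct sum, with the $j$-th summand becoming $D^j\calK_{m-j}$.

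\textbf{Grading.} Grade $\bR[x_1,x_2,y_1,y_2]$ by the bidegree $(a,b)$, where $a$ is the degree in $(x_1,y_1)$ and $b$ is the degree in $(x_2,y_2)$. Then $\calB_m$ is exactly the bidegree-$(m,m)$ component of the degree-$2m$ space. The two key observations are:
\begin{itemize}
\item $D$ is bihomogeneous of bidegree $(1,1)$, so multiplication by $D$ shifts bidegree by $(1,1)$.
\item $\Omega$ lowers bidegree by $(1,1)$, so $\Omega$ maps each bidegree component into a single bidegree component.
\end{itemize}
Consequently $\calH_{D,n} = \ker\Omega$ is a graded subspace: if $g = \sum g_{(a,b)}$ satisfies $\Omega g = 0$, then each $\Omega g_{(a,b)}$ lies in a distinct component, so each vanishes. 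Likewise $D^j\calH_{D,2m-2j}$ is graded, and its bidegree-$(m,m)$ part is $D^j(\calH_{D,2m-2j}\cap\calB_{m-j}) = D^j\calK_{m-j}$.

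\textbf{Distributing the intersection.} Take $h \in \calB_m$ and decompose it as $h = \sum_j D^j g_j$ with $g_j \in \calH_{D,2m-2j}$. Project every term onto the bidegree-$(m,m)$ component, which is a linear projection. Since $h$ already lies in that component, we get $h = \sum_j D^j \pi(g_j)$, where $\pi(g_j)$ is the bidegree-$(m-j,m-j)$ part of $g_j$. This part lies in $\calK_{m-j}$ because the harmonic spaces are graded. Hence $\calB_m = \sum_j D^j\calK_{m-j}$. Directness and mutual orthogonality are inherited from the ambient decomposition, since each $D^j\calK_{m-j}\subseteq D^j\calH_{D,2m-2j}$.

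\textbf{Main obstacle.} The only nonroutine point is the gradedness of $\ker\Omega$ and of $D\cdot(\ \cdot\ )$ with respect to the bidegree, which is what lets the intersection with $\calB_m$ commute with the direct sum. We verify this directly from the formulas for $D$ and $\Omega$. Each term of $D$ has one variable from each pair. Similarly, each term of $\Omega$ pairs a derivative in $(x_1,y_1)$ with one in $(x_2,y_2)$.

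Alternatively, one can bypass gradedness with an orthogonality argument. Distinct bidegree components are apolar-orthogonal, because distinct monomials are orthogonal. So the orthogonal projection onto $\calB_m$ is the bidegree projection, and it commutes with $D^j$ and $\Omega$ in the appropriate sense.
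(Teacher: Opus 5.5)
\textbf{Gap: orthogonality of the summands is not established.} Your spanning and directness arguments are correct. The bigrading argument shows that intersecting with $\calB_m$ distributes over the ambient decomposition, so $\calB_m = \sum_j D^j\calK_{m-j}$. Directness is inherited because $D^j\calK_{m-j}\subseteq D^j\calH_{D,2m-2j}$. This is a more careful version of the paper's one-line ``iterate Proposition~\ref{prop:ideal-harmonic}''.

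The problem is that Proposition~\ref{prop:ideal-harmonic} only says that $\mathcal{H}_n$ and $\mathcal{I}_n$ are orthogonal complements. The iterated decomposition $\mathcal{H}_n\oplus f\mathcal{H}_{n-m}\oplus\cdots$ is asserted only as a direct sum. For general $f$ it is not orthogonal, because multiplication by $f$ need not preserve orthogonality. For example, take $d=2$ and $f=x_1^2+x_1x_2$. Then $x_2^2\in\mathcal{H}_{f,2}$, yet $\langle f x_2^2, f^2\rangle=\langle x_1^2x_2^2+x_1x_2^3,\; x_1^4+2x_1^3x_2+x_1^2x_2^2\rangle = 2!\,2! = 4\neq 0$.

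So the proposition alone gives only $\calK_m\perp D^\ell\calK_{m-\ell}$ for $\ell\ge 1$, since $D^\ell\calK_{m-\ell}\subseteq\mathcal{I}_{D,2m}$. The orthogonality of $D^j\calK_{m-j}$ and $D^\ell\calK_{m-\ell}$ for $1\le j<\ell$ is not inherited from anything you cite. Your alternative paragraph about bidegree projections does not help: it concerns spanning, not orthogonality among the $D^j$-summands.

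\textbf{What is missing.} You need the property specific to $D$ that makes this work: the commutation relation $\Omega D-D\Omega=E_1+E_2+2$. It gives $\Omega D^\ell K=\ell(2k+\ell+1)D^{\ell-1}K$ for $K\in\calK_k$ (Proposition~\ref{prop:Omega-relations}). With this, for $K\in\calK_{m-j}$, $L\in\calK_{m-\ell}$ and $j<\ell$, adjointness yields
\[ \langle D^jK, D^\ell L\rangle=\langle K,\Omega^jD^\ell L\rangle=c\,\langle K,D^{\ell-j}L\rangle=c\,\langle \Omega K,D^{\ell-j-1}L\rangle=0, \]
which is exactly the paper's argument. For $f=D$ your ambient decomposition is in fact orthogonal, because $E_1+E_2+2$ acts as a scalar on each total-degree component. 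That is precisely the relation your proof never invokes, so you need to add this step.
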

\begin{proof}
The direct sum decomposition follows by iterating Proposition~\ref{prop:ideal-harmonic}.
For mutual orthogonality, fix $0 \leq j < \ell \leq m$, $K \in \calK_{m-j}$, and $L \in \calK_{m-\ell}$.
Repeatedly using the adjointness of $D$ and $\Omega$ and Proposition~\ref{prop:Omega-relations} gives
\[
    \langle D^j K, D^\ell L \rangle
    = \langle K, \Omega^j D^\ell L \rangle
    = c \langle K, D^{\ell-j}L \rangle
\]
for some scalar $c$.
Since $\ell-j \geq 1$,
\[
    \langle K, D^{\ell-j}L \rangle
    = \langle \Omega K, D^{\ell-j-1}L \rangle
    = 0,
\]
which proves the claim.
\end{proof}
\noindent
This decomposition, in this language just a special case of Proposition~\ref{prop:ideal-harmonic}, is essentially a rephrasing of the classical Clebsch-Gordan decomposition of representation theory and mathematical physics.
Indeed, after complexification, $\calW_m \cong \Sym^m(\bC^2)$ is the irreducible $\SU_2$-representation of highest weight $m$, and the Clebsch-Gordan formula gives $\calB_m \otimes_{\bR} \bC \cong
  \bigoplus_{j=0}^m \Sym^{2m-2j}(\bC^2)$ \cite[Section~11.2]{FH-2004-RepresentationTheory}.
Under this identification, $D$ and $\Omega$ are respectively invariant and equivariant for the diagonal $\SU_2$-action, and $(D^j\calK_{m-j}) \otimes_{\bR} \bC$ is precisely the summand $\Sym^{2m-2j}(\bC^2)$.

Let us also register how this decomposition interacts with another operator on $\calB_m$: denote by $\Sigma: \calB_m \to \calB_m$ the operation swapping the two variable pairs,
\[ (\Sigma F)(x_1, y_1, x_2, y_2) \colonequals F(x_2, y_2, x_1, y_1). \]
We have
\[ \Sigma D = -D. \]
The following says that this is the only non-trivial action of $\Sigma$ in the decomposition of Corollary~\ref{cor:cg}; its action fixes all of the factors lying in the kernel spaces.
\begin{proposition}
    \label{prop:K-expansion}
    Suppose $K(x_1, y_1, x_2, y_2) \in \calK_m$ with coefficient expansion
    \begin{equation}
    K(x_1, y_1, x_2, y_2) = \sum_{i, j = 0}^m k_{ij} x_1^i y_1^{m - i} x_2^j y_2^{m - j}. \label{eq:K-expansion}
    \end{equation}
    Then, there are scalars $c_k$ for $k = 0, 1, \dots, 2m$ such that
    \[ k_{ij} = c_{i + j} \binom{m}{i} \binom{m}{j}. \]
    Therefore, we have $\Sigma K = K$.
\end{proposition}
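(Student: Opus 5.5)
Write $\Omega K = 0$ directly in terms of the coefficients $k_{ij}$ from \eqref{eq:K-expansion}, and solve the resulting recurrence.

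First compute the two terms of $\Omega$ on a monomial $x_1^i y_1^{m-i} x_2^j y_2^{m-j}$:
\begin{align*}
\partial_{x_1}\partial_{y_2}\bigl(x_1^i y_1^{m-i} x_2^j y_2^{m-j}\bigr) &= i(m-j)\, x_1^{i-1}y_1^{m-i}x_2^j y_2^{m-j-1}, \\
\partial_{y_1}\partial_{x_2}\bigl(x_1^i y_1^{m-i} x_2^j y_2^{m-j}\bigr) &= (m-i)j\, x_1^{i}y_1^{m-i-1}x_2^{j-1} y_2^{m-j}.
\end{align*}
Now collect the coefficient of a fixed monomial $x_1^{a}y_1^{m-1-a}x_2^{b}y_2^{m-1-b}$ in $\Omega K$, for $0\le a,b\le m-1$. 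The first term contributes from $(i,j)=(a+1,b)$ and the second from $(i,j)=(a,b+1)$. The condition $\Omega K=0$ therefore becomes
\[
(a+1)(m-b)\,k_{a+1,b} = (m-a)(b+1)\,k_{a,b+1} \qquad \text{for all } 0\le a,b\le m-1.
\]

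Next normalize by setting $k_{ij} = \binom{m}{i}\binom{m}{j}\, d_{ij}$. Using $\binom{m}{a+1}(a+1) = \binom{m}{a}(m-a)$ and the analogous identity for $b$, both sides of the recurrence carry the common factor $\binom{m}{a}\binom{m}{b}(m-a)(m-b)$. This factor is nonzero because $a,b\le m-1$, so the recurrence reduces to $d_{a+1,b}=d_{a,b+1}$.

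So $d_{ij}$ is unchanged under moving one unit between $i$ and $j$ within $[0,m]^2$. Any two index pairs with the same sum $s=i+j$ lie on one antidiagonal, and consecutive points on that antidiagonal are linked by such moves, all of which stay inside the square. Hence $d_{ij}$ depends only on $i+j$. Calling this common value $c_{i+j}$ gives $k_{ij}=c_{i+j}\binom{m}{i}\binom{m}{j}$.

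Finally, $\Sigma$ swaps $(x_1,y_1)$ with $(x_2,y_2)$, so it sends the coefficient array $k_{ij}$ to $k_{ji}$. The formula just derived is symmetric in $i$ and $j$, so $\Sigma K = K$.

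The only step needing care is the index bookkeeping: matching shifted monomials correctly and checking that the cancelled factor $(m-a)(m-b)$ never vanishes on the relevant range. Beyond that the argument is routine.
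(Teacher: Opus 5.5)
Your proposal is correct and follows essentially the same route as the paper. Both extract the recurrence $(a+1)(m-b)k_{a+1,b} = (m-a)(b+1)k_{a,b+1}$ from $\Omega K = 0$, normalize by $\binom{m}{i}\binom{m}{j}$ to get constancy along antidiagonals, and read off $k_{ij}=k_{ji}$; your explicit checks that the cancelled factor $(m-a)(m-b)$ is nonzero and that the antidiagonal moves stay inside $[0,m]^2$ fill in details the paper leaves implicit.
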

\begin{proof}
Since $\Omega K = 0$, comparing the coefficient of $x_1^i y_1^{m-1-i}x_2^j y_2^{m-1-j}$ for $0 \leq i,j \leq m-1$ on either side of this equation gives
\[ (i+1)(m-j)k_{i+1,j} = (m-i)(j+1)k_{i,j+1}. \]
Set $c_{ij} \colonequals k_{ij} / (\binom{m}{i}\binom{m}{j})$.
Using $(i+1)\binom{m}{i+1} = (m-i)\binom{m}{i}$ and $(m-j)\binom{m}{j} = (j+1)\binom{m}{j+1}$, the preceding relation becomes $c_{i+1,j} = c_{i,j+1}$.
Thus $c_{ij}$ is constant on each set where $i+j=k$, and denoting this constant by $c_k$ gives the claimed formula.
In particular, $k_{ij}=k_{ji}$, so swapping the two variable pairs leaves $K$ unchanged.
\end{proof}

\begin{corollary}
    \label{cor:Bm-even-decomp}
    Suppose that $F \in \calB_m$ satisfies $\Sigma F = F$.
    Then, there exist $K_k \in \calK_{m - 2k}$ for each $k = 0, 1, \dots, \lfloor m / 2 \rfloor$ such that
    \begin{equation}
    F = \sum_{k = 0}^{\lfloor m / 2 \rfloor} D^{2k} K_k. \label{eq:even-expansion}
    \end{equation}
\end{corollary}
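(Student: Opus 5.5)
Starting from Corollary~\ref{cor:cg}, I will write $F = \sum_{j=0}^{m} D^j K'_j$ with $K'_j \in \calK_{m-j}$. This decomposition is unique, since the sum is direct. I then apply $\Sigma$ term by term. The map $\Sigma$ is a ring automorphism of the polynomial ring, and it preserves $\calB_m$. We have $\Sigma D = -D$, so $\Sigma(D^j) = (-1)^j D^j$. By Proposition~\ref{prop:K-expansion}, $\Sigma K'_j = K'_j$ for every $j$. Combining these,
\[ \Sigma F = \sum_{j=0}^m (-1)^j D^j K'_j . \]

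Next I use the hypothesis $\Sigma F = F$. Subtracting the expansion of $\Sigma F$ from that of $F$ gives
\[ 0 = F - \Sigma F = 2\sum_{j \text{ odd}} D^j K'_j . \]
Each term $D^j K'_j$ lies in the summand $D^j \calK_{m-j}$. By the directness (indeed orthogonality) of the decomposition in Corollary~\ref{cor:cg}, every summand must vanish, so $D^j K'_j = 0$ for all odd $j$. Multiplication by $D$ is injective on polynomials, because the polynomial ring is an integral domain. Hence $K'_j = 0$ for all odd $j$, though only $D^jK'_j = 0$ is actually needed.

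It remains to set $K_k \colonequals K'_{2k} \in \calK_{m-2k}$ for $0 \le k \le \lfloor m/2\rfloor$. With this choice, $F = \sum_k D^{2k} K_k$, which is exactly \eqref{eq:even-expansion}.

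There is no real obstacle here. The only point needing care is that $\Sigma$ is multiplicative, so that $\Sigma(D^j K) = (\Sigma D)^j\,\Sigma K$. This holds because $\Sigma$ is a substitution of variables.
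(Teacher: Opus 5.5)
Your proof is correct and follows the paper's argument. Both decompose $F$ via Corollary~\ref{cor:cg}, use $\Sigma D = -D$ together with $\Sigma K = K$ from Proposition~\ref{prop:K-expansion}, and symmetrize; the paper writes $F = \tfrac12 F + \tfrac12 \Sigma F$, which removes the odd-$j$ terms directly. Your appeal to directness to make each odd term vanish individually is harmless but unnecessary, since $\sum_{j \text{ odd}} D^j K'_j = 0$ already yields the even expansion.
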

\begin{proof}
    This follows immediately from using Corollary~\ref{cor:cg} on $F$, that $F = \frac{1}{2} F + \frac{1}{2}\Sigma F$, and that $\Sigma D = -D$.
\end{proof}
\noindent
As we mentioned in Section~\ref{sec:proof-techniques}, while Corollary~\ref{cor:cg} is the Clebsch-Gordan decomposition of all of $\calB_m = \calW_m \otimes \calW_m$, the above is the analogous decomposition of $\Sym^2(\calW_m)$, which may be identified with the subspace of $\calB_m$ fixed by $\Sigma$.

We note that one simple class of $F$ having this invariance is any factorized $F(x_1, y_1, x_2, y_2) = f(x_1, y_1)f(x_2, y_2)$ for some $f \in \calW_{m}$.
We will use this construction to apply Corollary~\ref{cor:Bm-even-decomp} below.

\subsection{Diagonal Restriction, Transvectants, and Bivariate Inner Products}

Next, we derive consequences of the above results on polynomials in four variables for polynomials in two variables.
Recall that we denote by $\calW_n$ the space of real homogeneous polynomials of degree $n$ in two variables $x, y$.
The bridge between the above results and polynomials on $\calW_n$ is the map setting $(x_1, y_1) = (x_2, y_2)$: we define $\calR : \calB_m \to \calW_{2m}$ by, for $F \in \calB_m$, setting
\[ (\calR F)(x, y) \colonequals F(x, y, x, y) \in \calW_{2m}. \]
This operation is sometimes called the \emph{restitution operator} or the \emph{trace operator} in the invariant theory literature.
The following are simple calculations using what we have shown already.
\begin{proposition}
    \label{prop:diag-inner-product}
    Suppose that $K(x_1, y_1, x_2, y_2) \in \calK_m$ has the expansion in \eqref{eq:K-expansion}.
    Then, we have
    \[ (\calR K)(x, y) = \sum_{k = 0}^{2m} c_k \binom{2m}{k} x^k y^{2m - k}. \]
\end{proposition}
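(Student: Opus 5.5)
The plan is to substitute $(x_1,y_1)=(x_2,y_2)=(x,y)$ directly into the expansion \eqref{eq:K-expansion}, using the closed form of the coefficients from Proposition~\ref{prop:K-expansion}.

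First, by Proposition~\ref{prop:K-expansion} we may write $k_{ij} = c_{i+j}\binom{m}{i}\binom{m}{j}$. Substituting into \eqref{eq:K-expansion} and restricting gives
\[ (\calR K)(x,y) = \sum_{i,j=0}^m c_{i+j}\binom{m}{i}\binom{m}{j} x^{i+j} y^{2m-i-j}. \]

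Next, we group the terms by $k = i+j$, which ranges over $0,\dots,2m$. The coefficient of $x^k y^{2m-k}$ is then
\[ c_k \sum_{\substack{0\le i,j\le m\\ i+j=k}} \binom{m}{i}\binom{m}{j}. \]

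Finally, this sum is evaluated by Vandermonde's identity: it equals $\binom{2m}{k}$, since it counts the $k$-subsets of a $2m$-element set split into two halves of size $m$. Equivalently, one compares the coefficients of $t^k$ in $(1+t)^m(1+t)^m = (1+t)^{2m}$. This gives the claimed formula.

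There is no real obstacle here. The only point needing care is the summation range: terms with $i>m$ or $j>m$ are absent, which matches the convention $\binom{m}{i}=0$ for $i>m$ used in Vandermonde's identity. The substantive input is the coefficient structure $k_{ij}=c_{i+j}\binom{m}{i}\binom{m}{j}$, and that has already been established in Proposition~\ref{prop:K-expansion}.
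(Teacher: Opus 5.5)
Your proposal is correct and is the same argument as the paper's proof. You substitute the coefficient form $k_{ij} = c_{i+j}\binom{m}{i}\binom{m}{j}$ from Proposition~\ref{prop:K-expansion}, group terms by $k = i+j$, and evaluate $\sum_{i+j=k}\binom{m}{i}\binom{m}{j} = \binom{2m}{k}$ by Vandermonde's identity.
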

\begin{proof}
    The result follows from grouping like terms in the result of Proposition~\ref{prop:K-expansion} and applying Vandermonde's identity for binomial coefficients.
\end{proof}

\begin{proposition}
    \label{prop:apolar-diag}
    Suppose that $K, L \in \calK_m$.
    Then,
    \[ \binom{2m}{m} \langle K, L \rangle = \langle \calR K, \calR  L \rangle, \]
    with apolar inner products on either side of the equation.
\end{proposition}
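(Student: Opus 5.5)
Both sides of the identity are bilinear in $(K,L)$, so I will prove it by a direct coefficient computation. The only inputs are Proposition~\ref{prop:K-expansion} and Proposition~\ref{prop:diag-inner-product}.

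Write $K$ as in \eqref{eq:K-expansion} with $k_{ij} = c_{i+j}\binom{m}{i}\binom{m}{j}$. Similarly write $L$ with coefficients $\ell_{ij} = d_{i+j}\binom{m}{i}\binom{m}{j}$, where the scalars $d_k$ come from Proposition~\ref{prop:K-expansion}.

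\textbf{Left-hand side.} By the definition of the apolar inner product on monomials in four variables,
\[ \langle K, L\rangle = \sum_{i,j=0}^m k_{ij}\ell_{ij}\, i!\,(m-i)!\,j!\,(m-j)! = (m!)^2 \sum_{i,j} c_{i+j} d_{i+j}\binom{m}{i}\binom{m}{j}. \]
Here I used $\binom{m}{i}^2 i!(m-i)! = m!\binom{m}{i}$, and likewise for $j$. Grouping the terms by $k = i+j$ and applying Vandermonde's identity gives
\[ \langle K, L\rangle = (m!)^2\sum_{k=0}^{2m} c_k d_k \binom{2m}{k}. \]

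\textbf{Right-hand side.} By Proposition~\ref{prop:diag-inner-product}, $\calR K = \sum_k c_k\binom{2m}{k}x^ky^{2m-k}$, and $\calR L$ has the same form with $d_k$ in place of $c_k$. The bivariate apolar inner product then gives
\[ \langle \calR K,\calR L\rangle = \sum_k c_kd_k\binom{2m}{k}^2 k!\,(2m-k)! = (2m)!\sum_k c_kd_k\binom{2m}{k}. \]

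\textbf{Comparison.} Taking the ratio of the two expressions gives $(2m)!/(m!)^2 = \binom{2m}{m}$, which is exactly the claimed identity.

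I do not expect a real obstacle. The only step needing care is the factorial bookkeeping, specifically the simplification $\binom{m}{i}^2 i!(m-i)! = m!\binom{m}{i}$, which is what allows Vandermonde's identity to apply after grouping by $i+j$.
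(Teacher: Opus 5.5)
Your proof is correct and is essentially identical to the paper's. Both expand $K$ and $L$ via Proposition~\ref{prop:K-expansion}, simplify $\binom{m}{i}^2 i!(m-i)! = m!\binom{m}{i}$, group by $i+j$ using Vandermonde's identity, and match the result against the coefficients of $\calR K$ and $\calR L$ from Proposition~\ref{prop:diag-inner-product}. One small wording point: instead of ``taking the ratio,'' which would divide by zero if $\sum_k c_k d_k \binom{2m}{k} = 0$, just note that both $\binom{2m}{m}\langle K, L\rangle$ and $\langle \calR K, \calR L\rangle$ equal $(2m)!\sum_k c_k d_k \binom{2m}{k}$.
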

\begin{proof}
    Let $K$ and $L$ admit expansions as in \eqref{eq:K-expansion} with coefficients $k_{ij}$ and $\ell_{ij}$, respectively.
    By Proposition~\ref{prop:K-expansion}, there exist $c_k$ and $d_k$ such that $k_{ij} = c_{i + j} \binom{m}{i} \binom{m}{j}$ and $\ell_{ij} = d_{i + j} \binom{m}{i} \binom{m}{j}$.
    Working from the definition of apolar inner product and then using Vandermonde's identity and Proposition~\ref{prop:diag-inner-product}, we find
    \begin{align*}
        \langle K, L \rangle
        &= \sum_{i, j = 0}^m k_{ij} \ell_{ij} \cdot i!(m - i)! j!(m - j)! \\
        &= (m!)^2 \sum_{i, j = 0}^m c_{i + j} d_{i + j} \binom{m}{i} \binom{m}{j} \\
        &= (m!)^2 \sum_{k = 0}^{2m} c_k d_k \binom{2m}{k} \\
        &= \frac{(m!)^2}{(2m)!} \langle \calR K, \calR L \rangle,
    \end{align*}
    as claimed.
\end{proof}

\begin{definition}
    \label{def:tv}
    For $f, g \in \calW_n$ and $0 \leq k \leq n$, 
    the \emph{$k$th transvectant} of $f$ and $g$ is defined as
    \[ \tau_k(f, g) \colonequals \calR \Omega^k P \in \calW_{2n - 2k},  \]
where $P(x_1, y_1, x_2, y_2) \colonequals f(x_1, y_1)g(x_2, y_2).$
\end{definition}

One use and interpretation of the transvectants is that, for $F(x_1, y_1, x_2, y_2) = f(x_1, y_1) f(x_2, y_2)$ as mentioned above, they extract the various components of the expansion \eqref{eq:even-expansion}, subject to the $\calR$ restriction.
We first establish some useful properties of $D$ and $\Omega$:

\begin{proposition}
    \label{prop:Omega-relations}
    Define the operators
    \begin{align*}
        E_1 &\colonequals x_1 \partial_{x_1} + y_1 \partial_{y_1}, \quad \text{and}\\
        E_2 &\colonequals x_2 \partial_{x_2} + y_2 \partial_{y_2}.
    \end{align*}
    Then, the following hold:
    \begin{enumerate}
        \item $\Omega D - D\Omega = E_1 + E_2 + 2$.
        \item If $K \in \calK_k$ and $\ell \geq 1$, then $\Omega D^{\ell}K = \ell(2k + \ell + 1)D^{\ell - 1}K$.
        \item If $K \in \calK_k$ and $\ell \geq 1$, then $\Omega^{\ell}(D^{\ell} K) = \ell! \frac{(2k+\ell+1)!}{(2k+1)!} K$.
    \end{enumerate}
\end{proposition}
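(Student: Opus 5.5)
Part 1 is a direct computation of the commutator, Part 2 follows by induction on $\ell$ from Part 1, and Part 3 follows by iterating Part 2.

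For part~1, we compute $[\Omega, D]$ term by term. Write $\Omega = \partial_{x_1}\partial_{y_2} - \partial_{y_1}\partial_{x_2}$ and $D = x_1y_2 - y_1x_2$. For any second-order constant-coefficient operator $\partial_a\partial_b$ and quadratic monomial $x_c x_d$ we use the Leibniz rule
\[ [\partial_a\partial_b, x_cx_d] = \delta_{ac}\delta_{bd} + \delta_{ad}\delta_{bc} + \delta_{ac} x_d\partial_b + \delta_{ad} x_c \partial_b + \delta_{bc} x_d\partial_a + \delta_{bd} x_c\partial_a. \]
The two cross terms, $[\partial_{x_1}\partial_{y_2}, y_1x_2]$ and $[\partial_{y_1}\partial_{x_2}, x_1y_2]$, vanish because the variables involved are disjoint. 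The diagonal terms give
\[ [\partial_{x_1}\partial_{y_2}, x_1y_2] = 1 + x_1\partial_{x_1} + y_2\partial_{y_2}, \qquad [\partial_{y_1}\partial_{x_2}, y_1x_2] = 1 + y_1\partial_{y_1} + x_2\partial_{x_2}. \]
Both signs in the expansion are $+$, so summing yields $E_1 + E_2 + 2$.

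For part~2, we induct on $\ell$. Applying part~1 to $D^{\ell-1}K$ gives
\[ \Omega D^\ell K = D\,\Omega D^{\ell-1}K + (E_1+E_2+2)D^{\ell-1}K. \]
Here $D^{\ell-1}K \in \calB_{k+\ell-1}$, so it is bihomogeneous of degree $k+\ell-1$ in each pair. Hence $E_1 + E_2$ acts on it as multiplication by $2(k+\ell-1)$, and the last term equals $(2k+2\ell)D^{\ell-1}K$.
\begin{itemize}
\item For $\ell = 1$, the first term is $D\,\Omega K = 0$ because $K \in \calK_k$, giving the coefficient $2k+2 = 1\cdot(2k+1+1)$.
\item For $\ell \geq 2$, the induction hypothesis makes the first term $(\ell-1)(2k+\ell)D^{\ell-1}K$.
\end{itemize}
The total coefficient is then $(\ell-1)(2k+\ell) + 2k + 2\ell = \ell(2k+\ell+1)$, as required.

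For part~3, we apply part~2 repeatedly, lowering the power of $D$ from $\ell$ down to $1$:
\[ \Omega^\ell D^\ell K = \prod_{j=1}^{\ell} j(2k+j+1)\, K = \ell!\,\frac{(2k+\ell+1)!}{(2k+1)!}\,K. \]

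The only real obstacle is keeping the signs and constant terms straight in the commutator of part~1; parts~2 and~3 are routine once it is established.
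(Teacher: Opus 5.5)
Your proof is correct and follows essentially the same route as the paper. Part~1 is the same product-rule commutator computation, which you write out explicitly where the paper only asserts it. Part~2 is the same induction, using that $D^{\ell-1}K$ is bihomogeneous so that $E_1+E_2+2$ acts as $2k+2\ell$, and Part~3 is the same iteration.
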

\begin{proof}
    The first result follows by the product rule.
    This result may also be rewritten in the form
    \[ \Omega D = D\Omega + (E_1 + E_2 + 2), \]
    which will be useful below.

    For the second result, since $K \in \calK_k \subseteq \calB_k$, $K$ has degree $k$ in both pairs $(x_1, y_1)$ and $(x_2, y_2)$, and so $D^{\ell - 1}K$ has degree $k + \ell - 1$ in both of these pairs.
    Thus, $(E_1 + E_2 + 2)D^{\ell - 1}K = (2k + 2\ell)D^{\ell - 1}K$.
    We have $\Omega K = 0$ and the recursive relation
    \begin{align*}
    \Omega D^{\ell} K
    &= D\Omega D^{\ell - 1} K + (E_1 + E_2 + 2)D^{\ell - 1} K \\
    &= D\Omega D^{\ell - 1} K + (2k + 2\ell)D^{\ell - 1} K,
    \end{align*}
    and by induction we find $\Omega D^{\ell} K = \ell(2k + \ell + 1) D^{\ell - 1} K$, as claimed.
    Iteratively applying this then gives the third result.
\end{proof}

\begin{proposition}
    \label{prop:diag-tv}
    Suppose that $f \in \calW_n$, and define $F(x_1, y_1, x_2, y_2) \colonequals f(x_1, y_1) f(x_2, y_2) \in \calB_n$.
    Let the expansion of $F$ of form \eqref{eq:even-expansion} have terms
    $K_k \in \calK_{n - 2k}$ for $k = 0, 1, \dots, \lfloor n / 2 \rfloor$.
    Then,
    \[ \calR K_k = \frac{(2n-4k+1)!}{(2k)!(2n - 2k + 1)!} \tau_{2k}(f, f). \]
\end{proposition}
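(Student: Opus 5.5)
Apply $\Omega^{2k}$ to the expansion $F = \sum_{j=0}^{\lfloor n/2\rfloor} D^{2j} K_j$ of Corollary~\ref{cor:Bm-even-decomp}, then restrict to the diagonal with $\calR$. By Definition~\ref{def:tv}, the left-hand side becomes $\calR\Omega^{2k}F = \tau_{2k}(f,f)$. The task is to compute $\Omega^{2k}(D^{2j}K_j)$ for each $j$ and show that, after applying $\calR$, only the $j = k$ term survives.

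\textbf{Terms with $j < k$.} Here $\Omega^{2k} D^{2j} K_j = \Omega^{2k-2j}\bigl(\Omega^{2j}D^{2j}K_j\bigr)$. By Proposition~\ref{prop:Omega-relations}(3), $\Omega^{2j}D^{2j}K_j$ is a scalar multiple of $K_j$. Since $2k-2j \geq 1$ and $\Omega K_j = 0$, the whole term vanishes.

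\textbf{Terms with $j > k$.} Iterating Proposition~\ref{prop:Omega-relations}(2) shows that $\Omega^{2k}D^{2j}K_j$ is a scalar multiple of $D^{2j-2k}K_j$. This polynomial has a factor of $D$, and $\calR D = x y - y x = 0$, since $\calR$ is a ring homomorphism (it is evaluation at $(x,y,x,y)$). So $\calR$ kills this term too.

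\textbf{The term $j = k$.} Here $K_k \in \calK_{n-2k}$, so Proposition~\ref{prop:Omega-relations}(3) with $\ell = 2k$ and index $n-2k$ gives
\[ \Omega^{2k}D^{2k}K_k = (2k)!\,\frac{(2(n-2k)+2k+1)!}{(2(n-2k)+1)!}\,K_k = (2k)!\,\frac{(2n-2k+1)!}{(2n-4k+1)!}\,K_k. \]

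Combining the three cases,
\[ \tau_{2k}(f,f) = (2k)!\,\frac{(2n-2k+1)!}{(2n-4k+1)!}\,\calR K_k. \]
Solving for $\calR K_k$ gives exactly the stated constant $\frac{(2n-4k+1)!}{(2k)!(2n-2k+1)!}$.

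The only potentially delicate step is the case $j < k$, where we must make sure the order of operations is legitimate. $\Omega$ maps $\calB_m \to \calB_{m-1}$ and is the zero operator on degrees below $2$, so the factorization $\Omega^{2k-2j}\Omega^{2j}$ is valid in every degree. The rest is bookkeeping of the factorial constants.
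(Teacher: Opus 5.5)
Your proof is correct and follows the paper's argument essentially step for step. You apply $\calR\Omega^{2k}$ to the even expansion, kill the terms with $j<k$ via $\Omega K_j=0$ and the terms with $j>k$ via $\calR D=0$, and evaluate the surviving term with part~3 of Proposition~\ref{prop:Omega-relations}, obtaining the same constant $(2k)!(2n-2k+1)!/(2n-4k+1)!$; your explicit remark that $\calR$ is a ring homomorphism with $\calR D = 0$ spells out a step the paper leaves implicit.
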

\begin{proof}
    We have by definition of the transvectant and linearity of $\Omega$ and $\calR$ that
    \begin{align*}
        \tau_{2k}(f, f)
        &= \calR \Omega^{2k} F \\
        &= \sum_{\ell = 0}^{\lfloor n / 2 \rfloor} \calR  \Omega^{2k} D^{2\ell} K_{\ell}.
    \end{align*}
    We claim that only the term $k = \ell$ here is non-zero.
    Indeed, by Proposition~\ref{prop:Omega-relations}, if $k > \ell$ then this is some multiple of a power of $\Omega$ applied to $K_{\ell}$, which gives zero, while if $k < \ell$ then it is a multiple of $D$, which gives zero after applying $\calR$.
    Applying part 3 of Proposition~\ref{prop:Omega-relations} then gives
    \begin{align*}
        \tau_{2k}(f, f)
        &= \calR \Omega^{2k}D^{2k} K_{k} \\
        &= \calR \left(\frac{(2k)!(2n-2k+1)!}{(2n-4k+1)!} K_k\right),
    \end{align*}
    and the result follows by linearity of $\calR$.
\end{proof}

\begin{theorem}\label{thm:inner-product-decom}
    For any $f, g \in \calW_n$, we have
    \[ \langle f, g \rangle^2 = \sum_{k = 0}^{\lfloor n/2 \rfloor} c_{n, k} \cdot \langle \tau_{2k}(f, f), \tau_{2k}(g, g) \rangle, \]
    with coefficients
    \[ c_{n, k} = \frac{(2n-4k+1)!((n-2k)!)^2}{(2k)!(2n-2k+1)!(2n-4k)!}. \]
\end{theorem}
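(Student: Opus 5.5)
The plan is to lift the two-variable inner product $\langle f,g\rangle^2$ to a four-variable one on $\calB_n$, decompose there with Corollary~\ref{cor:Bm-even-decomp}, and then come back down to $\calW_{2n-4k}$ using Propositions~\ref{prop:apolar-diag} and~\ref{prop:diag-tv}. Set $F(x_1,y_1,x_2,y_2) \colonequals f(x_1,y_1)f(x_2,y_2)$ and $G \colonequals g(x_1,y_1)g(x_2,y_2)$, both in $\calB_n$. The apolar inner product on monomials factorizes across disjoint sets of variables, so
\[ \langle F, G\rangle = \langle f,g\rangle^2 . \]

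Both $F$ and $G$ are fixed by $\Sigma$, so Corollary~\ref{cor:Bm-even-decomp} gives expansions
\[ F=\sum_k D^{2k}K_k, \qquad G=\sum_k D^{2k}L_k, \qquad K_k,L_k\in\calK_{n-2k}. \]
By the mutual orthogonality in Corollary~\ref{cor:cg}, the cross terms vanish and
\[ \langle F,G\rangle=\sum_k \langle D^{2k}K_k, D^{2k}L_k\rangle . \]
To evaluate each summand, move the $D$'s across using the adjointness of $D$ and $\Omega$ (Proposition~\ref{prop:apolar-diff}, part 2), and then apply part 3 of Proposition~\ref{prop:Omega-relations} with $\ell=2k$ and kernel degree $n-2k$:
\[ \langle D^{2k}K_k, D^{2k}L_k\rangle=\langle K_k,\Omega^{2k}D^{2k}L_k\rangle=\frac{(2k)!\,(2n-2k+1)!}{(2n-4k+1)!}\langle K_k,L_k\rangle. \]

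Next, convert $\langle K_k, L_k\rangle$ into an inner product in two variables. Proposition~\ref{prop:apolar-diag} with $m=n-2k$ gives
\[ \langle K_k,L_k\rangle=\binom{2n-4k}{n-2k}^{-1}\langle \calR K_k,\calR L_k\rangle , \]
and Proposition~\ref{prop:diag-tv} expresses
\[ \calR K_k=\frac{(2n-4k+1)!}{(2k)!(2n-2k+1)!}\,\tau_{2k}(f,f), \]
with the same formula for $\calR L_k$ in terms of $g$.

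Collecting the constants, the coefficient of $\langle\tau_{2k}(f,f),\tau_{2k}(g,g)\rangle$ is
\[ \frac{(2k)!(2n-2k+1)!}{(2n-4k+1)!}\cdot\frac{((n-2k)!)^2}{(2n-4k)!}\cdot\left(\frac{(2n-4k+1)!}{(2k)!(2n-2k+1)!}\right)^2=\frac{(2n-4k+1)!\,((n-2k)!)^2}{(2k)!\,(2n-2k+1)!\,(2n-4k)!}, \]
which is exactly $c_{n,k}$.

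The only real obstacle is the constant bookkeeping. One point to check is that the adjointness in Proposition~\ref{prop:apolar-diff} applies to $D$ as a polynomial in four variables, with $D(\bm\partial)=\Omega$; this holds as stated. The other is to confirm the exact indexing in part 3 of Proposition~\ref{prop:Omega-relations}: with $K\in\calK_{n-2k}$ and $\ell=2k$ it yields $(2k)!\,(2n-2k+1)!/(2n-4k+1)!$, which matches the constant used in Proposition~\ref{prop:diag-tv}.
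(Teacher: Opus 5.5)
Your proposal is correct and follows essentially the same route as the paper's proof: lift to $F$ and $G$ in $\calB_n$, decompose with Corollary~\ref{cor:Bm-even-decomp}, use orthogonality and the $D$/$\Omega$ adjointness with part~3 of Proposition~\ref{prop:Omega-relations}, then descend via Propositions~\ref{prop:apolar-diag} and~\ref{prop:diag-tv}. Your constant bookkeeping also checks out and yields exactly $c_{n,k}$.
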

\begin{proof}
    Define $F, G \in \calB_n$ by
    \begin{align*}
        F(x_1, y_1, x_2, y_2) &\colonequals f(x_1, y_1)f(x_2, y_2), \\
        G(x_1, y_1, x_2, y_2) &\colonequals g(x_1, y_1) g(x_2, y_2).
    \end{align*}
    A direct calculation shows that, on the one hand,
    \[ \langle f, g \rangle^2 = \langle F, G \rangle. \]

    On the other hand, Corollary~\ref{cor:Bm-even-decomp} applies to both $F$ and $G$, and thus there exist $K_k, L_k \in \calK_{n - 2k}$ for each $k = 0, 1, \dots, \lfloor n/2 \rfloor$ such that
    \begin{align*}
        F &= \sum_{k = 0}^{\lfloor n / 2 \rfloor} D^{2k} K_k \\
        G &= \sum_{k = 0}^{\lfloor n / 2 \rfloor} D^{2k} L_k.
    \end{align*}
    By the orthogonality property of Corollary~\ref{cor:cg}, summands for different $k$ in these two sums are orthogonal under the apolar inner product.
    Thus, we have
    \begin{align*}
    \langle F, G \rangle
    &= \sum_{k = 0}^{\lfloor n / 2 \rfloor} \langle D^{2k} K_k, D^{2k} L_k \rangle
    \intertext{By the adjointness property from Proposition~\ref{prop:apolar-diff} followed by Proposition~\ref{prop:Omega-relations}, we have}
    &= \sum_{k = 0}^{\lfloor n / 2 \rfloor} \langle K_k, \Omega^{2k} D^{2k} L_k \rangle \\
    &= \sum_{k = 0}^{\lfloor n / 2 \rfloor} \frac{(2k)!(2n-2k+1)!}{(2n-4k+1)!} \cdot \langle K_k, L_k \rangle
    \intertext{Now, using Proposition~\ref{prop:apolar-diag} in each term,}
    &= \sum_{k = 0}^{\lfloor n / 2 \rfloor} \frac{(2k)!(2n-2k+1)!}{(2n-4k+1)!} \cdot \frac{((n - 2k)!)^2}{(2n - 4k)!} \cdot \langle \calR  K_k, \calR L_k \rangle
    \intertext{and finally, using Proposition~\ref{prop:diag-tv} in each remaining polynomial,}
    &= \sum_{k = 0}^{\lfloor n / 2 \rfloor} \frac{(2n-4k+1)!}{(2k)!(2n - 2k + 1)!} \cdot \frac{((n - 2k)!)^2}{(2n - 4k)!} \cdot \langle \tau_{2k}(f, f), \tau_{2k}(g, g) \rangle,
    \end{align*}
    giving the result.
\end{proof}

\section{Main Permanent Inequality: Proof of Theorem~\ref{thm:perm-inequality}} \label{sec:perm-SoS}

For a finite set $S$, we write $\mathcal{P}_2(S)$ for the set of perfect matchings of $S$ (or, equivalently, partitions into parts all of size exactly 2).
For ease of presentation, we first introduce the following polynomial.

\begin{definition}[$\Delta$-polynomial]
    Let $\UU = (\uu_1, \dots, \uu_n)^\top \in \bR^{n \times 2}$ be a matrix with row vectors $\uu_1, \dots, \uu_n \in \bR^2$. For any $S \subseteq [n]$ of even size, define
    \begin{align*}
        \Delta_S(\UU) \colonequals \sum_{M \in \calP_2(S)} \prod_{ \{i,j\} \in M } \det\nolimits^{2}\begin{pmatrix}
            \uu_i^\top \\
            \uu_j^\top
        \end{pmatrix}.
    \end{align*}
    By convention we take $\Delta_{\varnothing} = 1$.
    Note that $\Delta_S(\UU)$ is a homogeneous sum-of-squares polynomial in $\UU$ of degree $2|S|$.
\end{definition}
\noindent
The following is a straightforward fact for $\Delta$-polynomials.
\begin{lemma}\label{lem:delta-contraction}
    Let $\UU = (\uu_1, \dots, \uu_n)^\top \in \bR^{n \times 2}$, $S \subseteq [n]$ be of even size $|S| = s$, and $0 \le t \le s$ be even. Then,
    \begin{align*}
        \sum_{\substack{T \subseteq S:\\ |T| = t}} \Delta_T(\UU) \Delta_{S \setminus T}(\UU) &= \binom{\frac{s}{2}}{\frac{t}{2}}\Delta_S(\UU).
    \end{align*}
\end{lemma}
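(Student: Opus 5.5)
This is a double-counting argument. Write $w_{ij} \colonequals \det^2\begin{pmatrix}\uu_i^\top\\ \uu_j^\top\end{pmatrix}$ and expand both sides as sums of products of the $w_{ij}$ over perfect matchings. The weights play no role: it suffices to show that each perfect matching $M \in \calP_2(S)$ appears on the left-hand side exactly $\binom{s/2}{t/2}$ times, with coefficient one each time. The identity then holds as a polynomial identity, indeed for arbitrary symmetric weights $w_{ij}$.

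First expand the left-hand side. For a fixed $T \subseteq S$ with $|T| = t$,
\[ \Delta_T(\UU)\Delta_{S\setminus T}(\UU) = \sum_{M_1 \in \calP_2(T)} \sum_{M_2 \in \calP_2(S \setminus T)} \prod_{\{i,j\} \in M_1 \cup M_2} w_{ij}. \]
Since $T$ and $S \setminus T$ are disjoint and together cover $S$, the union $M = M_1 \cup M_2$ is a perfect matching of $S$. Moreover $T$ is a union of exactly $t/2$ edges of $M$, namely the edges of $M_1$. So the left-hand side is a sum over pairs $(T, M)$ with $M \in \calP_2(S)$ and $T$ a union of $t/2$ edges of $M$. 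This uses that $t$ is even; when $t = 0$ or $t = s$ the convention $\Delta_\varnothing = 1$ is consistent with this description.

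Conversely, fix $M \in \calP_2(S)$ and a choice of $t/2$ of its $s/2$ edges. Let $T$ be the union of the chosen edges. Then $M$ restricts to a perfect matching $M_1$ of $T$ and to a perfect matching $M_2$ of $S \setminus T$, and the pair $(M_1, M_2)$ is uniquely recovered from $(T, M)$.

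This gives a bijection between the terms of the left-hand side and the pairs ($M$, a $t/2$-subset of the edges of $M$). Hence each $\prod_{e \in M} w_e$ appears with coefficient $\binom{s/2}{t/2}$, which is the right-hand side.

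There is no real obstacle here. The only points needing care are the bookkeeping of this bijection and checking the edge cases $t \in \{0, s\}$ against the convention $\Delta_\varnothing = 1$.
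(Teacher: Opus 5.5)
Your proposal is correct and uses the same double-counting argument as the paper. Each $M \in \calP_2(S)$ arises from exactly $\binom{s/2}{t/2}$ triples $(T, M_1, M_2)$, one for each choice of which $t/2$ edges of $M$ form $M_1$.
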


\begin{proof}
    Calculating directly,
    \begin{align*}
        &\sum_{\substack{T \subseteq S:\\ |T| = t}} \Delta_T(\UU) \Delta_{S \setminus T}(\UU) \\
        &= \sum_{\substack{T \subseteq S:\\ |T| = t}} \left(\sum_{M_1 \in \calP_2(T)} \prod_{ \{i,j\} \in M_1 } \det\nolimits^{2}\begin{pmatrix}
            \uu_i^\top \\
            \uu_j^\top
        \end{pmatrix}\right)\left( \sum_{M_2 \in \calP_2(S \setminus T)} \prod_{ \{i,j\} \in M_2 } \det\nolimits^{2}\begin{pmatrix}
            \uu_i^\top \\
            \uu_j^\top
        \end{pmatrix}\right).
    \end{align*}
    For every fixed choice of $T \subseteq S, M_1 \in \calP_2(T)$, and $M_2 \in \calP_2(S \setminus T)$, we have $M_1 \sqcup M_2 \in \calP_2(S)$. On the other hand, for every $M \in \calP_2(S)$, there are exactly $\binom{s/2}{t/2}$ choices of $T \subseteq S, M_1 \in \calP_2(T)$, and $M_2 \in \calP_2(S \setminus T)$ such that $M_1 \sqcup M_2 = M$, since $M_1$ must consist of exactly $\frac{t}{2}$ pairs of $M$, $M_2 = M \setminus M_1$, and $T$ is determined by $M_1$. Thus, we see
    \begin{align*}
        &\sum_{\substack{T \subseteq S:\\ |T| = t}} \left( \sum_{M_1 \in \calP_2(T)} \prod_{ \{i,j\} \in M_1 } \det\nolimits^{2}\begin{pmatrix}
            \uu_i^\top \\
            \uu_j^\top
        \end{pmatrix}\right)\left( \sum_{M_2 \in \calP_2(S \setminus T)} \prod_{ \{i,j\} \in M_2 } \det\nolimits^{2}\begin{pmatrix}
            \uu_i^\top \\
            \uu_j^\top
        \end{pmatrix}\right)\\
        &= \binom{\frac{s}{2}}{\frac{t}{2}} \sum_{M \in \calP_2(S)} \prod_{ \{i,j\} \in M } \det\nolimits^{2}\begin{pmatrix}
            \uu_i^\top \\
            \uu_j^\top
        \end{pmatrix}\\
        &= \binom{\frac{s}{2}}{\frac{t}{2}} \Delta_S(\UU). \qedhere
    \end{align*}
\end{proof}

Using the $\Delta$-polynomials, our goal in this section will be to prove the following statement, which gives a sum-of-squares proof of Theorem~\ref{thm:perm-inequality}.

\begin{theorem}\label{thm:perm-inequality-SoS}
    Let $\AA = \UU \VV^\top \in \bR^{n \times n}$ where $\UU, \VV \in \bR^{n \times 2}$. Then,
    \begin{align*}
        \frac{\per^2(\AA)}{(n!)^2} - \frac{\per(\AA \otimes \JJ_2)}{(2n)!} &= \sum_{k=1}^{\lfloor n/2 \rfloor} C_{n,k} \sum_{\substack{I, J \subseteq [n]:\\ |I| = |J| = n-2k }} \frac{\per^2(\AA_{I,J})}{((n-2k)!)^2} \cdot \Delta_{[n] \setminus I}(\UU) \Delta_{[n] \setminus J}(\VV),
    \end{align*}
    where the coefficients $C_{n,k}$ for $1 \le k \le \lfloor n/2\rfloor$ are given by
    \begin{align*}
        C_{n,k} &\colonequals \frac{(2n-2k)!(2k)!(n-2k)!(n-2k+1)!}{(2n)!(2k-1)(n-k)!(n-k+1)!}.
    \end{align*}
    In particular, $C_{n,k} \ge 0$ for $1 \le k \le \lfloor n/2\rfloor$, and
    \[\frac{\per^2(\AA)}{(n!)^2} - \frac{\per(\AA \otimes \JJ_2)}{(2n)!}\]
    equals a homogeneous sum-of-squares polynomial in $\UU$ and $\VV$ of degree $4n$.
\end{theorem}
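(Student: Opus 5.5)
We will express both sides as apolar inner products of bivariate forms and then apply Theorem~\ref{thm:inner-product-decom}. Write $\uu_i = (u_{i1}, u_{i2})$ and $\vv_j = (v_{j1}, v_{j2})$, and set
\[ f(x,y) \colonequals \prod_{i=1}^n (u_{i1}x + u_{i2}y) \in \calW_n, \qquad g(x,y) \colonequals \prod_{j=1}^n (v_{j1}x+v_{j2}y) \in \calW_n. \]
By part 1 of Proposition~\ref{prop:apolar-diff}, $\langle f,g\rangle = g(\bm\partial) f$. Expanding $\prod_j (v_{j1}\partial_x + v_{j2}\partial_y)$ applied to $\prod_i(\uu_i\cdot(x,y))$ gives exactly $\sum_\pi \prod_i \langle \uu_i,\vv_{\pi(i)}\rangle = \per(\AA)$. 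Similarly, $\AA\otimes\JJ_2 = \tilde\UU\tilde\VV^\top$ with each row duplicated, so $\per(\AA\otimes\JJ_2) = \langle f^2,g^2\rangle$.

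Theorem~\ref{thm:inner-product-decom} gives
\[ \langle f,g\rangle^2=\sum_k c_{n,k}\langle\tau_{2k}(f,f),\tau_{2k}(g,g)\rangle . \]
We need to identify the $k=0$ term with $\frac{(n!)^2}{(2n)!}\langle f^2,g^2\rangle$. Since $\tau_0(f,f)=f^2$, we have
\[ c_{n,0}=\frac{(2n+1)!(n!)^2}{(2n+1)!(2n)!}=\frac{(n!)^2}{(2n)!}, \]
which is exactly the needed constant. Dividing by $(n!)^2$ then gives
\[ \frac{\per^2(\AA)}{(n!)^2} - \frac{\per(\AA\otimes\JJ_2)}{(2n)!} = \frac{1}{(n!)^2}\sum_{k\ge1} c_{n,k}\langle \tau_{2k}(f,f),\tau_{2k}(g,g)\rangle. \]

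The remaining work is to compute $\tau_{2k}(f,f)$ for a product of linear forms. Because $\Omega$ is a second-order operator in the two variable pairs, applying $\Omega^{2k}$ to $P=\prod_i \ell_i(x_1,y_1)\prod_j\ell_j(x_2,y_2)$, where $\ell_i=\uu_i\cdot(x,y)$, hits $2k$ factors from the first copy and $2k$ from the second. The operator $\Omega$ applied to $\ell_a(x_1,y_1)\ell_b(x_2,y_2)$ gives $\det(\uu_a,\uu_b)$. Hence $\calR\Omega^{2k}P$ is a sum over ordered choices of distinct index sequences $a_1,\dots,a_{2k}$ and $b_1,\dots,b_{2k}$ of
\[ \prod_r \det(\uu_{a_r},\uu_{b_r}) \cdot \prod_{i\notin\{a\}}\ell_i \prod_{j\notin\{b\}}\ell_j . \]

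I expect this to collapse, via the antisymmetry of $\det$ and an analogue of Lemma~\ref{lem:delta-contraction}, into the form
\[ \tau_{2k}(f,f)=\alpha_{n,k}\sum_{|I|=n-2k}\Delta_{[n]\setminus I}(\UU)\,\ell_I^2 + (\text{terms orthogonal after pairing}), \]
with $\ell_I=\prod_{i\in I}\ell_i$. This is the main obstacle: a direct expansion of $\tau_{2k}(f,f)$ contains cross terms, for example $\det(\uu_a,\uu_b)\det(\uu_b,\uu_c)\cdots$ chains with $\ell_a\ell_c$ left over, rather than a clean $\Delta\cdot\ell_I^2$ form.

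To handle this, I would instead compute the pairing $\langle\tau_{2k}(f,f),\tau_{2k}(g,g)\rangle$ directly. Using Proposition~\ref{prop:apolar-diff} to move operators across, the pairing becomes a single differential expression. Contracting the $\Omega$'s yields determinants. The apolar pairing of the leftover products $\ell_I\ell_{I'}$ against the corresponding products of $\vv$-forms yields, by the same permanent expansion as in the first step, the quantity $\per(\AA_{I,J})^2$ after symmetrization. The pairing of determinant chains should organize into perfect matchings on $[n]\setminus I$ and $[n]\setminus J$, giving $\Delta_{[n]\setminus I}(\UU)\Delta_{[n]\setminus J}(\VV)$.

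If this direct route is too messy, a fallback is the characterization in Proposition~\ref{prop:diag-tv}. There I would guess an explicit element of $\calK_{n-2k}$ whose restitution $\calR$ gives the claimed expression, and check $\Omega$-harmonicity and the matching normalization. Either way, matching constants should give $C_{n,k}=c_{n,k}\,\alpha_{n,k}^2\,((n-2k)!)^2/(n!)^2$, and I would verify this against the stated formula for small $n$, e.g.\ $n=2$, $k=1$.

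Nonnegativity then follows termwise. We have $C_{n,k}\ge0$ since $2k-1\ge1$, each $\per^2(\AA_{I,J})$ is a square, and each product of $\Delta$'s is a sum of products of squares. Every term is homogeneous of degree $4n$ in $(\UU,\VV)$: $\per^2(\AA_{I,J})$ contributes degree $2(n-2k)$ in each of $\UU$ and $\VV$, and $\Delta_{[n]\setminus I}(\UU)$ contributes $4k$ in $\UU$, with the same count for $\VV$.
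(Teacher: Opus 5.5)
\textbf{There is a genuine gap at the step that is supposed to produce the squares.} Your opening steps match the paper: Lemma~\ref{lem:per-inner-product}, Theorem~\ref{thm:inner-product-decom}, and $c_{n,0}=(n!)^2/(2n)!$ isolating the $k=0$ term. The trouble comes next. Grant the clean form $\tau_{2k}(f,f)=\alpha_{n,k}\sum_{|I|=n-2k}F_{\UU,I}^2\,\Delta_{[n]\setminus I}(\UU)$. The pairing is then
\[ \langle\tau_{2k}(f,f),\tau_{2k}(g,g)\rangle=\alpha_{n,k}^2\sum_{I,J}\langle F_{\UU,I}^2,F_{\VV,J}^2\rangle\,\Delta_{[n]\setminus I}(\UU)\,\Delta_{[n]\setminus J}(\VV). \]
By the very lemma you used in your first step, $\langle F_{\UU,I}^2,F_{\VV,J}^2\rangle=\per(\AA_{I,J}\otimes\JJ_2)$, not $\per^2(\AA_{I,J})$, and no symmetrization changes this.

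These are quantities of exactly the kind you are trying to bound, and they can be negative: for $f=x^2-y^2$ and $g=xy$ one gets $\langle f^2,g^2\rangle=-8$. So the one-shot decomposition gives a true identity that is not a sum of squares. Correspondingly, your predicted constant $C_{n,k}=c_{n,k}\alpha_{n,k}^2((n-2k)!)^2/(n!)^2$ is wrong, even with the correct $\alpha_{n,k}=(-1)^k(2k)!(n-k)!/(n-2k)!$. For $n=4$ it gives $1/2016$ at $k=1$ and $1/720$ at $k=2$, whereas the stated formula gives $C_{4,1}=1/336$ and $C_{4,2}=1/2520$. Your sanity check at $n=2$, $k=1$ cannot detect this, because there $I=J=\varnothing$ and no recursion occurs.

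\textbf{The missing idea is strong induction on $n$.} Apply the theorem at size $n-2k$ to each $\per(\AA_{I,J}\otimes\JJ_2)/(2n-4k)!$. This replaces it by $\per^2(\AA_{I,J})/((n-2k)!)^2$ minus corrections involving $\per^2(\AA_{I',J'})$ for $I'\subset I$, $J'\subset J$. Merge the resulting products $\Delta_{I\setminus I'}\Delta_{[n]\setminus I}$ using Lemma~\ref{lem:delta-contraction}. Then verify the coefficient recursion
\[ C_{n,k}=D_{n,k}-\sum_{t=1}^{k-1}D_{n,t}\,C_{n-2t,k-t}\binom{k}{t}^2, \qquad D_{n,k}=c_{n,k}\alpha_{n,k}^2(2n-4k)!/(n!)^2. \]
This recursion is genuinely nontrivial. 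The paper rewrites it as a terminating very-well-poised ${}_5F_4$ at $1$ and shows it vanishes by the Rogers--Dougall summation.

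\textbf{The transvectant step is also left unresolved.} Your worry about leftover chain terms in $\tau_{2k}(f,f)$ is unfounded: the paper shows the clean form is exact (Proposition~\ref{prop:transvectant-identity}). It appends $n-2k$ rows $(-y,x)$ to $\UU$ and writes $\tau_{2k}(F_\UU,F_\UU)$ as a multiple of $\per(\CC)$, where $\CC_{i,j}=\det(\uu_i,\uu_j)$ is antisymmetric of rank two. It then applies MacMahon's master theorem with $\det(\II-\TT\CC)=1+\sum_{i<j}t_it_j\CC_{i,j}^2$, which makes all chain contributions cancel.
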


Recall that the choices of $\AA = x\JJ_n - \aalpha \oone_n^\top - \oone_n \bbeta^\top$ and $\AA = x\JJ_n - \aalpha \bbeta^\top$ are used to establish the finite free convolution inequalities in Theorem~\ref{thm:ffc-inequality}. Hence we get the following corollary for the finite free convolutions using the sum-of-squares proof in Theorem~\ref{thm:perm-inequality-SoS}.
\begin{corollary}\label{cor:ffc-inequality-SoS}
    For monic real-rooted polynomials \begin{align*}
        p(x) &= \prod_{i=1}^n (x - \alpha_i), \\
        q(x) &= \prod_{i=1}^n (x - \beta_i)
    \end{align*}
    of degree $n$, let $\aalpha \colonequals (\alpha_1, \dots, \alpha_n)^\top, \bbeta \colonequals (\beta_1, \dots, \beta_n)^\top \in \bR^n$. Then, \begin{align*}
        (p \boxplus_n q)^2(x) - (p^2 \boxplus_{2n} q^2)(x) \quad \text{ and } \quad (p \boxtimes_n q)^2(x) - (p^2 \boxtimes_{2n} q^2)(x)
    \end{align*}
    are sum-of-squares polynomials in $x, \aalpha$ and $\bbeta$.
\end{corollary}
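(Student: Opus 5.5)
The plan is to combine the identities from the proof of Theorem~\ref{thm:ffc-inequality} with the explicit sum-of-squares representation of Theorem~\ref{thm:perm-inequality-SoS}. Two facts carry the argument: a polynomial substitution preserves the property of being a sum of squares, and every term on the right-hand side of Theorem~\ref{thm:perm-inequality-SoS} is a non-negative constant times a product of squares.

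\emph{Step 1: factor the matrices.} I write each matrix as $\AA = \UU\VV^\top$ with $\UU, \VV \in \bR^{n\times 2}$ whose entries are polynomials in $x, \aalpha, \bbeta$.
\begin{itemize}
\item For the additive case, $\AA = x\JJ_n - \aalpha\oone_n^\top - \oone_n\bbeta^\top$. I take $\UU$ with rows $\uu_i = (x-\alpha_i, 1)$ and $\VV$ with rows $\vv_j = (1, -\beta_j)$, so that $\uu_i^\top \vv_j = x - \alpha_i - \beta_j$.
\item For the multiplicative case, $\AA = x\JJ_n - \aalpha\bbeta^\top$. I take $\uu_i = (x, \alpha_i)$ and $\vv_j = (1, -\beta_j)$.
\end{itemize}

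\emph{Step 2: identify the left-hand side.} The computation in the proof of Theorem~\ref{thm:ffc-inequality}, via Proposition~\ref{prop:real-rooted-ffc}, gives
\[ (p\boxplus_n q)^2(x) - (p^2\boxplus_{2n} q^2)(x) = \frac{\per^2(\AA)}{(n!)^2} - \frac{\per(\AA\otimes\JJ_2)}{(2n)!}, \]
and the same holds in the multiplicative case. This identity holds for all real $x, \aalpha, \bbeta$, and both sides are polynomials in these variables. Hence it is an identity of polynomials, which is what a sum-of-squares statement requires.

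\emph{Step 3: expand the right-hand side.} I apply Theorem~\ref{thm:perm-inequality-SoS} to this $\AA$. Each summand is a product of three factors:
\begin{itemize}
\item the constant $C_{n,k} \ge 0$, which can be absorbed into the squares as $\sqrt{C_{n,k}}$;
\item the square $\per^2(\AA_{I,J})/((n-2k)!)^2$ of a polynomial in $x, \aalpha, \bbeta$;
\item the factor $\Delta_{[n]\setminus I}(\UU)\,\Delta_{[n]\setminus J}(\VV)$.
\end{itemize}
By definition, each $\Delta_S$ is a sum over matchings of products of squared $2\times 2$ determinants, and these determinants are polynomials in $x, \aalpha, \bbeta$ after the substitution. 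Expanding the product of the two $\Delta$'s therefore gives a sum of squares of products of determinants. Multiplying by the permanent square keeps each term a square. Summing over $k, I, J$ yields an explicit sum of squares in $x, \aalpha, \bbeta$.

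I do not expect a genuine obstacle. The only point needing care is Step 2: the identities through Proposition~\ref{prop:real-rooted-ffc} must be read as polynomial identities in the roots and $x$, rather than merely pointwise equalities for specific polynomials. The argument above already justifies this.
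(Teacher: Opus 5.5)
Your proposal is correct and takes essentially the same route the paper indicates. You substitute the rank-two factorizations of $x\JJ_n - \aalpha\oone_n^\top - \oone_n\bbeta^\top$ and $x\JJ_n - \aalpha\bbeta^\top$ (your choices of $\UU,\VV$ check out) into the identity of Theorem~\ref{thm:perm-inequality-SoS}, using that $C_{n,k}>0$ and that products of sums of squares are sums of squares; your remark that the equalities from the proof of Theorem~\ref{thm:ffc-inequality} hold identically in $x,\aalpha,\bbeta$, and hence as polynomial identities, is exactly the detail the paper leaves implicit.
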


Before we prove Theorem~\ref{thm:perm-inequality-SoS}, we state a few auxiliary results. In this section, we reserve $F$ for the special class of bivariate polynomials which are products of linear polynomials, defined as follows.

\begin{definition}[$F$-polynomial]
    Let $\UU = (\uu_1, \dots, \uu_n)^\top \in \bR^{n \times 2}$ be a matrix with row vectors $\uu_1, \dots, \uu_n \in \bR^2$. For $1\le i \le n$, denote
    \begin{align*}
        \uu_i = (a_i, b_i)^\top \in \bR^2.
    \end{align*}
    We use $F_{\UU}(x,y) \in \calW_n$ to denote the bivariate polynomial defined by
    \begin{align*}
        F_{\UU}(x,y) \colonequals \prod_{i=1}^n (a_i x + b_i y).
    \end{align*}
    For $I \subseteq [n]$ of size $|I| = m$, we use $F_{\UU, I} \in \calW_{m}$ to denote the bivariate polynomial defined by
    \begin{align*}
        F_{\UU, I}(x,y) \colonequals \prod_{i\in I} (a_i x + b_i y).
    \end{align*}
\end{definition}

The following lemma expresses the permanents of rank-2 matrices using the $F$-polynomials.
\begin{lemma}\label{lem:per-inner-product}
    Let $\AA = \UU \VV^\top \in \bR^{n \times n}$ where $\UU, \VV \in \bR^{n \times 2}$. Then,
    \begin{align*}
        \langle F_{\UU}, F_{\VV} \rangle &= \per(\AA),\\
        \langle F_{\UU}^2, F_{\VV}^2 \rangle &= \per(\AA \otimes \JJ_2).
    \end{align*}
    For $I, J \subseteq [n]$ of equal size,
    \begin{align*}
        \langle F_{\UU,I}, F_{\VV,J} \rangle &= \per(\AA_{I,J}).
    \end{align*}
\end{lemma}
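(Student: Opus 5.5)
We will verify all three identities with part~4 of Proposition~\ref{prop:apolar-diff} and the multilinearity of the apolar inner product. The key fact is this: for linear forms $\ell_1,\dots,\ell_m$ and $\ell'_1,\dots,\ell'_m$ in $\calW_1$,
\[ \Big\langle \prod_{i=1}^m \ell_i, \prod_{j=1}^m \ell'_j \Big\rangle = \sum_{\pi \in \calS_m} \prod_{i=1}^m \langle \ell_i, \ell'_{\pi(i)} \rangle, \]
where $\langle a x + b y, c x + d y\rangle = ac + bd$ for linear forms. This is the classical permanent formula for the apolar (Bombieri) pairing of products of linear forms.

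To prove it, we use part~1 of Proposition~\ref{prop:apolar-diff}: $\langle f, g\rangle = f(\bm\partial) g$. Writing $\ell_i = a_i x + b_i y$, the operator $f(\bm\partial) = \prod_i (a_i \partial_x + b_i \partial_y)$ is a product of commuting first-order operators. Each factor $a_i\partial_x + b_i\partial_y$ applied to a linear form $\ell'_j = c_j x + d_j y$ gives the constant $a_i c_j + b_i d_j$. Applying the $m$ derivations one at a time to $\prod_j \ell'_j$ via the Leibniz rule, every one of the $m$ linear factors must receive exactly one derivation, since the total degree is $m$ and the result is a constant. Summing over these assignments gives exactly the sum over bijections $\pi$, which is the displayed formula. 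An alternative route is to expand both sides in monomials and compare with the definition of the inner product, but the differential route is cleaner.

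With this fact, all three statements follow. Take $\ell_i = a_i x + b_i y$ from the rows $\uu_i$ of $\UU$ and $\ell'_j = c_j x + d_j y$ from the rows $\vv_j$ of $\VV$. Then $\langle \ell_i,\ell'_j\rangle = \uu_i^\top \vv_j = \AA_{i,j}$, so $\langle F_\UU, F_\VV\rangle = \per(\AA)$. Restricting the products to indices in $I$ and $J$ gives the submatrix $\AA_{I,J}$, and hence $\langle F_{\UU,I}, F_{\VV,J}\rangle = \per(\AA_{I,J})$. For the squares, $F_\UU^2$ is the product of the $2n$ linear forms $\ell_1,\ell_1,\ell_2,\ell_2,\dots,\ell_n,\ell_n$, and similarly for $F_\VV^2$. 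The matrix of pairings between these duplicated lists is the $2n\times 2n$ matrix whose $2\times 2$ block in position $(i,j)$ has all entries equal to $\AA_{i,j}$. Up to a simultaneous permutation of rows and columns, which does not change the permanent, this is exactly $\AA\otimes\JJ_2$, so $\langle F_\UU^2, F_\VV^2\rangle = \per(\AA\otimes\JJ_2)$.

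The only step requiring care is the Leibniz-rule bookkeeping in the permanent formula, in particular checking that no factorial normalization appears. Part~1 of Proposition~\ref{prop:apolar-diff} has no $1/m!$ factor, and the derivations are labeled by the distinct indices $i$, so each bijection is counted exactly once. As a sanity check, the case $m=1$ gives $ac+bd$, and for $x^m$ the formula gives $m!$, which matches the definition of the apolar inner product.
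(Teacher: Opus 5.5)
Your proof is correct, but it takes a different route from the paper's. The paper works directly from the definition of the apolar inner product. It expands $F_{\UU}$ and $F_{\VV}$ in the monomials $x^k y^{n-k}$, with coefficients $\sum_{|A|=k}\prod_{i\in A}a_i\prod_{j\notin A}b_j$. It then observes that the apolar weight $k!(n-k)!$ equals the number of permutations $\pi$ with $\pi(A)=B$, so the double sum over $A,B$ reassembles into $\sum_{\pi}\prod_i(a_ic_{\pi(i)}+b_id_{\pi(i)})$. The other two identities are handled with ``the same proof works by encoding them as $F$-polynomials.''

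You instead use the differential form $\langle f,g\rangle=f(\bm\partial)g$ (part~1 of Proposition~\ref{prop:apolar-diff}) together with the Leibniz rule for a product of commuting derivations. This gives a general permanent formula for the pairing of any two products of linear forms. Your argument works verbatim in any number of variables and correctly handles repeated factors, because the derivations are labeled. That is what makes the $\AA\otimes\JJ_2$ and $\AA_{I,J}$ cases explicit one-line specializations rather than ``the same proof.'' The cost is that you rely on the cited differential interpretation rather than the bare definition. The paper's route is fully self-contained and shows the coincidence between the apolar weights and the count of permutations sending $A$ to $B$.

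Two cosmetic points. Your opening sentence cites part~4 of the proposition and ``multilinearity,'' but what you actually use is part~1, bilinearity, and the Leibniz rule. Also, the cleanest justification of the bijection claim is that any two derivations annihilate a linear form, so each factor receives at most one derivation, and $m$ derivations on $m$ factors forces exactly one each.
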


\begin{proof}
    Let $\UU = (\uu_1, \dots, \uu_n)^\top, \VV = (\vv_1, \dots, \vv_n)^\top \in \bR^{n \times 2}$ where
    \begin{align*}
        \uu_i = (a_i,b_i)^\top, \vv_i = (c_i,d_i)^\top \in \bR^2, \quad \text{ for }1\le i \le n.
    \end{align*}
    We directly compute
    \begin{align*}
        &\langle F_{\UU}, F_{\VV} \rangle\\
        &= \left\langle \prod_{i=1}^n (a_i x + b_i y), \prod_{i=1}^n (c_i x + d_i y)\right\rangle\\
        &= \left\langle \sum_{k=0}^n \left(\sum_{\substack{A \subseteq [n]:\\ |A| = k}} \prod_{i \in A} a_i \prod_{j \in [n] \setminus A} b_j\right) x^k y^{n-k}, \sum_{k=0}^n \left(\sum_{\substack{B \subseteq [n]:\\ |B| = k}} \prod_{i \in B} c_i \prod_{j \in [n] \setminus B} d_j\right) x^k y^{n-k}\right\rangle\\
        &= \sum_{k=0}^n k!(n-k)!\left(\sum_{\substack{A \subseteq [n]:\\ |A| = k}} \prod_{i \in A} a_i \prod_{j \in [n] \setminus A} b_j\right)\left(\sum_{\substack{B \subseteq [n]:\\ |B| = k}} \prod_{i \in B} c_i \prod_{j \in [n] \setminus B} d_j\right)\\
        &= \sum_{k=0}^n \sum_{\substack{A, B \subseteq [n]:\\ |A| = |B| = k} } \sum_{\substack{\pi \in \calS_n:\\ \pi(A) = B} } \prod_{i \in A} (a_i c_{\pi(i)}) \prod_{j \in [n] \setminus A} (b_j d_{\pi(j)})\\
        &= \sum_{\pi \in \calS_n} \prod_{i \in [n]} (a_i c_{\pi(i)} + b_i d_{\pi(i)})\\
        &= \sum_{\pi \in \calS_n} \prod_{i \in [n]} \AA_{i,\pi(i)}\\
        &= \per(\AA).
    \end{align*}
    The same proof works for $\langle F_{\UU}^2, F_{\VV}^2 \rangle$ and $\langle F_{\UU,I}, F_{\VV,J} \rangle$ by encoding them as $F$-polynomials.
\end{proof}

We also have the following identity for the transvectant of the $F$-polynomials, the proof of which is deferred to Section~\ref{sec:transvectant-identity}.
\begin{proposition}\label{prop:transvectant-identity}
    Let $\UU \in \bR^{n \times 2}$ and $0 \le k \le \lfloor n/2 \rfloor$. Then,
    \begin{align*}
        \tau_{2k}(F_{\UU}, F_{\UU}) = (-1)^k \frac{(2k)!(n-k)!}{(n-2k)!} \sum_{\substack{I \subseteq [n]:\\ |I| = n-2k}} F_{\UU, I}(x,y)^2 \cdot \Delta_{[n] \setminus I}(\UU).
    \end{align*}
\end{proposition}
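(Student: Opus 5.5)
We prove Proposition~\ref{prop:transvectant-identity} by computing $\Omega^{2k}$ on the product $P(x_1,y_1,x_2,y_2) = F_{\UU}(x_1,y_1)F_{\UU}(x_2,y_2) = \prod_{i=1}^n \ell_i^{(1)}\ell_i^{(2)}$, where $\ell_i^{(r)} \colonequals a_i x_r + b_i y_r$. The operator $\Omega = \partial_{x_1}\partial_{y_2} - \partial_{y_1}\partial_{x_2}$ is a second-order operator that pairs one derivative in the first variable pair with one in the second. Acting on a product of linear forms, it removes one factor $\ell_i^{(1)}$ and one factor $\ell_j^{(2)}$ and contributes the scalar
\[ \partial_{x_1}\ell_i^{(1)}\,\partial_{y_2}\ell_j^{(2)} - \partial_{y_1}\ell_i^{(1)}\,\partial_{x_2}\ell_j^{(2)} = a_i b_j - b_i a_j = \det\begin{pmatrix}\uu_i^\top\\ \uu_j^\top\end{pmatrix} \equalscolon d_{ij}. \]
The first step is therefore the expansion
\[ \Omega^{2k} P = \sum (2k)!\, \prod_{t=1}^{2k} d_{i_t j_t} \prod_{i\notin A}\ell_i^{(1)}\prod_{j\notin B}\ell_j^{(2)}. \]
Here the sum runs over sets $A,B\subseteq[n]$ with $|A|=|B|=2k$ and bijections $\sigma: A\to B$, written $i_t\mapsto j_t$. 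The factor $(2k)!$ counts the orderings in which the $2k$ successive applications of $\Omega$ can pick these pairs. This uses only the fact that each $\ell$ is linear, so no factor can be hit twice. The expansion can be cleanly justified by the multinomial form of $\Omega^{2k}$ applied to a multilinear product. Since $d_{ii}=0$, the terms with $i_t=j_t$ vanish automatically.

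The second step is to apply $\calR$, which sets $(x_1,y_1)=(x_2,y_2)=(x,y)$ and turns each $\ell_i^{(r)}$ into $\ell_i \colonequals a_ix+b_iy$. The restricted term becomes
\[ (2k)!\,\prod_t d_{i_t j_t}\; F_{\UU,[n]\setminus A}\,F_{\UU,[n]\setminus B}. \]
This is not yet of the form $F_{\UU,I}^2$ when $A\neq B$, so the main obstacle is to collapse it. The combinatorial sum alone will not produce $A=B$. The cancellation must come from Plücker-type relations among the $d_{ij}$ and the $\ell_i$, most importantly the three-term identity
\[ d_{ij}\,\ell_h = d_{hj}\,\ell_i - d_{hi}\,\ell_j \quad\text{(up to sign conventions)}. \]
My first attempt avoids direct manipulation. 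Instead I would use invariance and degree counting to pin the answer down.

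Both sides of the proposition are polynomials in $\UU$ that are covariant under $\mathrm{SL}_2$ acting on the rows, are multilinear of degree $2$ in each row $\uu_i$, and are symmetric in the rows. I would restrict to the vector space of such covariants of degree $2n-4k$ in $(x,y)$ that have total weight $2k$ in the brackets. By the first fundamental theorem, these are spanned by products of brackets $d_{ij}$ and forms $\ell_i$. Using the straightening (Plücker) relations, I would show that the symmetrized expressions $\sum_I F_{\UU,I}^2\Delta_{[n]\setminus I}$ and the restricted $\Omega^{2k}P$ agree up to a constant. An alternative, more hands-on route is induction on $n$: split off the last row $\uu_n$, use the Leibniz-like recursion for $\Omega$ on $P=\ell_n^{(1)}\ell_n^{(2)}P'$, and apply $\calR$. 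With $\calR\Omega^{j}$ of the smaller product as the inductive hypothesis, the recursion leaves only the terms with $j\in\{2k,2k-1,2k-2\}$. The cross term involving $\ell_n^{(1)}\,\partial\ell_n^{(2)}$ is handled by the identity $\ell_n\,\partial_{x}(\cdot)$ paired against $\partial_y$, which is exactly $d$-weighted.

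The last step is fixing the constant $(-1)^k (2k)!(n-k)!/(n-2k)!$. I would do this by evaluating at a convenient configuration: take $\uu_1=\dots=\uu_{n-2k}=(1,0)$ and the remaining rows generic. Alternatively, compare the coefficient of the single monomial $\prod_{t}d_{i_t j_t}^2$ attached to one fixed matching $M$ of a fixed $2k$-set $S$. The sign $(-1)^k$ comes from $d_{ji}=-d_{ij}$ along each of the $k$ pairs of $M$, and the factor $(n-k)!/(n-2k)!$ comes from counting the bijections $\sigma$ that contribute to that monomial after straightening. I expect the straightening/collapse step to be the real difficulty; the constant is bookkeeping once the shape is known.
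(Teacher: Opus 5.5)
Your first step is correct and matches the paper: expanding $\Omega^{2k}P$ as $(2k)!\sum_{A,B}\per(\CC_{A,B})\prod_{i\notin A}\ell_i^{(1)}\prod_{j\notin B}\ell_j^{(2)}$ and then applying $\calR$. After that there is a genuine gap. The content of the proposition is the collapse of the terms $F_{\UU,[n]\setminus A}F_{\UU,[n]\setminus B}$ with $A\neq B$ into squares $F_{\UU,I}^2$ weighted by $\Delta_{[n]\setminus I}(\UU)$, and this step is only announced.

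\textbf{Invariant-theory route.} It rests on a claim you never state: that the $\mathrm{SL}_2$-covariants of order $2n-4k$ which are quadratic in each row and symmetric in the rows form a \emph{one-dimensional} space. The first fundamental theorem plus straightening gives you a spanning set of bracket monomials, but not ``agree up to a constant'' unless you prove that dimension count. The claim is true: the space is the multiplicity of $\Sym^{2n-4k}\bC^2$ in $\Sym^n(\Sym^2\bC^2)\cong\bigoplus_{j}\Sym^{2n-4j}\bC^2$, which is one. But you would have to identify and justify it.

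\textbf{Induction route.} This does not close. In the Leibniz expansion of $\Omega^{2k}(\ell_n^{(1)}\ell_n^{(2)}P')$, the terms where $\ell_n^{(1)}$ or $\ell_n^{(2)}$ is differentiated produce first-order operators such as $a_n\partial_{y_2}-b_n\partial_{x_2}$ applied to $\Omega^{2k-1}P'$ or $\Omega^{2k-2}P'$ \emph{before} restriction. Since $\calR$ does not commute with these operators, the inductive hypothesis, which concerns only $\calR\Omega^{2j}$ of a symmetric product, says nothing about them.

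\textbf{The constant.} It is also left undetermined. The ``coefficient of $\prod_t d_{i_tj_t}^2$'' is not well defined, because brackets satisfy Plücker relations and so have no canonical monomial expansion. The special configuration you propose is never actually evaluated.

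\textbf{The missing idea.} The paper does all the straightening in one computation. It adjoins $n-2k$ copies of $w=(-y,x)^\top$ as extra rows, so that each $\ell_i=a_ix+b_iy=\det\begin{pmatrix}\uu_i^\top\\ w^\top\end{pmatrix}$ is itself a bracket. The restricted expansion then equals $(-1)^{n-2k}\frac{(2k)!}{((n-2k)!)^2}\per(\CC)$ for the $(2n-2k)\times(2n-2k)$ bracket matrix $\CC$. Because $\CC=\aa\bb^\top-\bb\aa^\top$ is skew-symmetric of rank two, MacMahon's Master Theorem and Sylvester's identity give $\det(\II-\TT\CC)=1+\sum_{i<j}t_it_j\CC_{ij}^2$. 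Hence
\[
\per(\CC)=(-1)^{n-k}(n-k)!\sum_{M\in\calP_2([2n-2k])}\prod_{\{i,j\}\in M}\CC_{ij}^2 .
\]
This identity is exactly the collapse you were looking for. Since $\CC$ vanishes on the $w$-block, every perfect matching pairs the copies of $w$ bijectively with some $I\subseteq[n]$ of size $n-2k$. There are $(n-2k)!$ such pairings, each contributing $F_{\UU,I}^2$, and the remaining set $[n]\setminus I$ is matched internally, contributing $\Delta_{[n]\setminus I}(\UU)$. The constant $(-1)^k(2k)!(n-k)!/(n-2k)!$ then comes out directly, with no separate calibration needed.
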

\noindent
The final ingredient we will use is the following standalone combinatorial result, the proof of which is deferred to Section~\ref{sec:combinatorial-identity}.
\begin{lemma}
\label{lem:combinatorial-identity}
    Let $n, k \in \bN$ satisfy $1 \le k \le \lfloor n/2\rfloor$. Then,
    \begin{align*}
        \sum_{t=0}^k \frac{2n-4t+1}{2k-2t-1} \binom{2t}{t} \binom{2k-2t}{k-t} \frac{((n-t)!)^2(2n-2t-2k)!}{(n-t-k)!(n-t-k+1)!(2n-2t+1)!} = 0.
    \end{align*}
\end{lemma}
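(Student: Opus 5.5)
The plan is to treat the left-hand side as a terminating hypergeometric sum in $t$ and evaluate it in closed form, expecting the closed form to carry a factor that vanishes for $k \ge 1$. Write $T_t$ for the $t$th summand. All factors are ratios of factorials in $t$, together with the linear factors $2n-4t+1$ and $1/(2k-2t-1)$. Hence $T_{t+1}/T_t$ is a rational function of $t$, and the sum is $T_0 \cdot {}_rF_s(\cdots;1)$ for explicit parameters.

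A useful first sanity check comes from the generating-function facts
\[\sum_j \binom{2j}{j}x^j = (1-4x)^{-1/2}, \qquad -\sum_j \frac{1}{2j-1}\binom{2j}{j}x^j = (1-4x)^{1/2}.\]
Their product is $1$, so $\sum_t \binom{2t}{t}\binom{2k-2t}{k-t}/(2k-2t-1)=0$ for $k\ge1$. The lemma is a weighted version of this, with weights depending on $m = n-t$. This explains where the cancellation comes from, and it suggests the $t$-dependent weights must assemble into a balanced or well-poised structure.

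First I compute the term ratio. Writing $\binom{2t}{t} = 4^t (1/2)_t/t!$, $\binom{2k-2t}{k-t}$ via $(1/2)_{k-t}/(k-t)!$, and converting the reversed factorials (e.g.\ $(n-t)!$, $(2n-2t+1)!$, $(n-t-k)!$) into rising factorials in $t$ with negative parameters, I expect the following parameter structure. The factor $(2n-4t+1) = (2n+1)\,\bigl(-n+\tfrac12+1\bigr)_t... $ fits the pattern $(1+a/2)_t/(a/2)_t$ with $a = -n-\tfrac12$. So the sum should be a very-well-poised series with $a=-n-\tfrac12$, whose other numerator parameters come from $\tfrac12$ (from $\binom{2t}{t}$), $-k$, $\tfrac12-k$ (from $1/(2k-2t-1)$ and $\binom{2k-2t}{k-t}$), and $-n+k$, $-n+k-1$ (from the two $(n-t-k)$-type factorials). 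I would then check well-poisedness: each numerator/denominator pair should sum to $1+a$. I expect this to land on Dougall's terminating ${}_5F_4$ or ${}_7F_6$ summation. Evaluating it gives a product of Pochhammer symbols, and I expect one of them, e.g.\ $(\tfrac12 - \tfrac12)_k$-type or $(1+a-b-c)_k$ with a zero argument, to vanish exactly when $k\ge1$.

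The main obstacle is this parameter bookkeeping: confirming that the series is genuinely (very-)well-poised, and balanced if ${}_7F_6$ is needed, rather than merely close to it. If the structure does not match Dougall exactly, I would fall back on the WZ/Gosper method. I would run Gosper's algorithm on $T_t$ (as a function of $t$, with $n,k$ symbolic) to find a rational $R(t)$ with $T_t = G_{t+1}-G_t$, where $G_t = R(t)T_t$. I would then verify the certificate identity as a rational-function identity, and check the boundary terms $G_0 = 0$ and $G_{k+1}=0$; the latter holds because $\binom{2k-2t}{k-t}$-type factors vanish beyond $t=k$. The sum from $t=0$ to $k$ then telescopes to $0$.

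If Gosper fails, the last resort is Zeilberger's algorithm in $k$ (or $n$). This yields a recurrence satisfied by the sum $S(n,k)$. I would verify the recurrence and then check initial values such as $S(n,1)=0$ directly from the two-term sum.
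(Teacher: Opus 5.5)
Your plan is the paper's proof: the paper writes the sum as $\Phi_{n,k}(0)$ times a terminating very-well-poised ${}_5F_4$ at $1$, with $a=-n-\tfrac12$, $b=\tfrac12$, $c=k-n-1$ and $m=k$, and Dougall's formula then produces the factor $(1+a-b-c)_k=(1-k)_k=0$. Your numerator parameters are correct before cancellation, but $\tfrac12-k$ and $k-n$ also occur as denominator parameters (from $\binom{2k-2t}{k-t}$ and $(2n-2t-2k)!$) and cancel, so the ${}_5F_4$ case applies directly and the Gosper/Zeilberger fallbacks are not needed. (If you instead keep the uncancelled ${}_7F_6$, it is balanced, and Dougall's formula gives the vanishing factor $(0)_k$.)
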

\noindent
With these results, we now give the proof of our main claim.
\begin{proof}[Proof of Theorem~\ref{thm:perm-inequality-SoS}]
    Let $\AA = \UU \VV^\top \in \bR^{n \times n}$ where $\UU, \VV \in \bR^{n \times 2}$. By Lemma~\ref{lem:per-inner-product}, to prove
    \begin{align*}
        \frac{\per^2(\AA)}{(n!)^2} - \frac{\per(\AA \otimes \JJ_2)}{(2n)!} &= \sum_{k=1}^{\lfloor n/2 \rfloor} C_{n,k} \sum_{\substack{I, J \subseteq [n]:\\ |I| = |J| = n-2k }} \frac{\per^2(\AA_{I,J})}{((n-2k)!)^2} \cdot \Delta_{[n] \setminus I}(\UU) \Delta_{[n] \setminus J}(\VV),
    \end{align*}
    it is enough to show
    \begin{align*}
        \frac{\langle F_{\UU}, F_{\VV} \rangle^2}{(n!)^2} - \frac{\langle F_{\UU}^2, F_{\VV}^2 \rangle}{(2n)!} &= \sum_{k=1}^{\lfloor n/2 \rfloor} C_{n,k} \sum_{\substack{I, J \subseteq [n]:\\ |I| = |J| = n-2k }} \frac{\langle F_{\UU, I}, F_{\VV, J} \rangle^2}{((n-2k)!)^2} \cdot \Delta_{[n] \setminus I}(\UU) \Delta_{[n] \setminus J}(\VV),
    \end{align*}
    or equivalently
    \begin{align}
        \frac{\langle F_{\UU}^2, F_{\VV}^2 \rangle}{(2n)!} = \frac{\langle F_{\UU}, F_{\VV} \rangle^2}{(n!)^2}  -\sum_{k=1}^{\lfloor n/2 \rfloor} C_{n,k} \sum_{\substack{I,J \subseteq [n]:\\ |I| = |J| = n-2k} } \frac{\langle F_{\UU, I}, F_{\VV, J} \rangle^2}{((n-2k)!)^2} \cdot \Delta_{[n] \setminus I}(\UU) \Delta_{[n] \setminus J}(\VV). \label{eq:inner-product-SoS}
    \end{align}

    We prove \eqref{eq:inner-product-SoS} by strong induction on $n$. We verify the base cases $n = 0$ and $n = 1$ as follows:
    \begin{align*}
        \frac{\langle 1^2, 1^2 \rangle}{0!} &= 1\\
        &= \frac{\langle 1, 1 \rangle^2}{0!},\\
        \frac{\langle (ax + by)^2, (cx + dy)^2 \rangle}{2!} &= \frac{1}{2}\langle a^2 x^2 + 2ab xy + b^2 y^2, c^2x^2 + 2cd xy + d^2 y^2 \rangle\\
        &= a^2c^2 + 2abcd + b^2d^2\\
        &= (ac + bd)^2\\
        &= \frac{\langle ax+by, cx+dy\rangle^2}{(1!)^2}.
    \end{align*}

    Now fix $n \ge 2$ and assume that \eqref{eq:inner-product-SoS} holds for all $0 \le n' < n$. We will show that \eqref{eq:inner-product-SoS} holds for $n$. By Theorem~\ref{thm:inner-product-decom} and Proposition~\ref{prop:transvectant-identity}, we have
    \begin{align*}
        \frac{\langle F_{\UU}, F_{\VV} \rangle^2}{(n!)^2}
        &= \sum_{k=0}^{\lfloor n/2 \rfloor} \frac{c_{n,k}(2n-4k)!}{(n!)^2}\cdot \frac{\langle \tau_{2k}(F_{\UU}, F_{\UU}), \tau_{2k}(F_{\VV}, F_{\VV})\rangle}{(2n-4k)!}\\
        &= \sum_{k=0}^{\lfloor n/2 \rfloor} \frac{c_{n,k}(2n-4k)!((2k)!)^2 ((n-k)!)^2 }{(n!)^2((n-2k)!)^2} \\
        &\hspace{2cm} \sum_{\substack{I, J \subseteq [n]:\\ |I| = |J| = n-2k} } \frac{\langle F_{\UU, I}(x,y)^2, F_{\VV, J}(x,y)^2 \rangle}{(2n-4k)!} \cdot \Delta_{[n] \setminus I}(\UU) \Delta_{[n] \setminus J}(\VV),
    \intertext{where $c_{n,k} = \frac{(2n-4k+1)!((n-2k)!)^2}{(2k)!(2n-2k+1)!(2n-4k)!}$. Simplifying the expression, we get}
        &= \sum_{k=0}^{\lfloor n/2 \rfloor} D_{n,k} \sum_{\substack{I, J \subseteq [n]:\\ |I| = |J| = n-2k} } \frac{\langle F_{\UU, I}(x,y)^2, F_{\VV, J}(x,y)^2 \rangle}{(2n-4k)!} \cdot \Delta_{[n] \setminus I}(\UU) \Delta_{[n] \setminus J}(\VV),
    \end{align*}
    where the coefficients $D_{n,k}$ for $0 \le k \le \lfloor n/2\rfloor$ are given by
    \begin{align*}
        D_{n,k} \colonequals \frac{(2k)!((n-k)!)^2(2n-4k+1)!}{(n!)^2(2n-2k+1)!}.
    \end{align*}
    At $k = 0$, the coefficient evaluates to $D_{n,0} = 1$. Therefore,
    \begin{align*}
        &\frac{\langle F_{\UU}, F_{\VV} \rangle^2}{(n!)^2} - \frac{\langle F_{\UU}^2, F_{\VV}^2 \rangle}{(2n)!}\\
        &\hspace{1cm} = \sum_{k=1}^{\lfloor n/2 \rfloor} D_{n,k} \sum_{\substack{I, J \subseteq [n]:\\ |I| = |J| = n-2k} } \frac{\langle F_{\UU, I}(x,y)^2, F_{\VV, J}(x,y)^2 \rangle}{(2n-4k)!} \cdot \Delta_{[n] \setminus I}(\UU) \Delta_{[n] \setminus J}(\VV) \numberthis \label{eq:inner-product-expansion}
    \end{align*}
    Note that for $1\le k \le \lfloor n/2\rfloor$ and $I, J \subseteq [n]$ of size $n-2k$, by the inductive hypothesis we have
    \begin{align*}
        &\frac{\langle F_{\UU, I}(x,y)^2, F_{\VV, J}(x,y)^2 \rangle}{(2n-4k)!}\\
        &\hspace{0.5cm} = \frac{\langle F_{\UU, I}, F_{\VV, J} \rangle^2}{((n-2k)!)^2} - \sum_{\ell = 1}^{\lfloor \frac{n-2k}{2} \rfloor} C_{n-2k,\ell} \sum_{\substack{I' \subseteq I ,J' \subseteq J:\\ |I'| = |J'| = n-2k - 2\ell} } \frac{\langle F_{\UU, I'}, F_{\VV, J'} \rangle^2}{((n-2k-2\ell)!)^2} \cdot \Delta_{I \setminus I'}(\UU) \Delta_{J \setminus J'}(\VV) \numberthis \label{eq:inductive-hypothesis-replacement}
    \end{align*}
    Substituting the summands in \eqref{eq:inner-product-expansion} with \eqref{eq:inductive-hypothesis-replacement}, we get
    \begin{align*}
        &\frac{\langle F_{\UU}, F_{\VV} \rangle^2}{(n!)^2} - \frac{\langle F_{\UU}^2, F_{\VV}^2 \rangle}{(2n)!}\\
        &\hspace{0.5cm} = \sum_{k=1}^{\lfloor n/2 \rfloor} D_{n,k} \sum_{\substack{I, J \subseteq [n]:\\ |I| = |J| = n-2k} } \frac{\langle F_{\UU, I}(x,y)^2, F_{\VV, J}(x,y)^2 \rangle}{(2n-4k)!} \cdot \Delta_{[n] \setminus I}(\UU) \Delta_{[n] \setminus J}(\VV)\\
        &\hspace{0.5cm} = G_n(\UU, \VV) - H_n(\UU, \VV),
    \end{align*}
    where the two parts $G_n$ and $H_n$ are given by
    \begin{align*}
        G_n(\UU, \VV) &\colonequals \sum_{k=1}^{\lfloor n/2 \rfloor} D_{n,k} \sum_{\substack{I, J \subseteq [n]:\\ |I| = |J| = n-2k} } \frac{\langle F_{\UU, I}, F_{\VV, J} \rangle^2}{((n-2k)!)^2} \cdot \Delta_{[n] \setminus I}(\UU) \Delta_{[n] \setminus J}(\VV),\\
        H_n(\UU, \VV) &\colonequals \sum_{k=1}^{\lfloor n/2 \rfloor} D_{n,k} \sum_{\substack{I, J \subseteq [n]:\\ |I| = |J| = n-2k} } \sum_{\ell = 1}^{\lfloor \frac{n-2k}{2} \rfloor} C_{n-2k,\ell} \\
        &\hspace{1cm} \sum_{\substack{I' \subseteq I, J' \subseteq J:\\ |I'| = |J'| = n-2k - 2\ell} } \frac{\langle F_{\UU, I'}, F_{\VV, J'} \rangle^2}{((n-2k-2\ell)!)^2} \Delta_{I \setminus I'}(\UU) \Delta_{J \setminus J'}(\VV) \cdot \Delta_{[n] \setminus I}(\UU) \Delta_{[n] \setminus J}(\VV).
    \end{align*}
    We further simplify $H_n$ as follows.
    \begin{align*}
        &H_n(\UU, \VV)\\
        &= \sum_{k=1}^{\lfloor n/2 \rfloor} D_{n,k} \sum_{\substack{I, J \subseteq [n]:\\ |I| = |J| = n-2k} } \sum_{\ell = 1}^{\lfloor \frac{n-2k}{2} \rfloor} C_{n-2k,\ell} \\
        &\quad \sum_{\substack{I' \subseteq I, J' \subseteq J:\\ |I'| = |J'| = n-2k - 2\ell} } \frac{\langle F_{\UU, I'}, F_{\VV, J'} \rangle^2}{((n-2k-2\ell)!)^2} \Delta_{I \setminus I'}(\UU) \Delta_{J \setminus J'}(\VV) \cdot \Delta_{[n] \setminus I}(\UU) \Delta_{[n] \setminus J}(\VV)\\
        &= \sum_{k=1}^{\lfloor n/2 \rfloor} D_{n,k} \sum_{\ell = 1}^{\lfloor \frac{n-2k}{2} \rfloor} C_{n-2k,\ell} \sum_{\substack{I',J' \subseteq [n]:\\ |I'| = |J'| = n-2k - 2\ell} } \frac{\langle F_{\UU, I'}, F_{\VV, J'} \rangle^2}{((n-2k-2\ell)!)^2} \\
        &\quad  \sum_{\substack{I, J \subseteq [n]:\\ I \supseteq I', J \supseteq J',\\ |I| = |J| = n-2k} } \Delta_{I \setminus I'}(\UU) \Delta_{J \setminus J'}(\VV) \Delta_{[n] \setminus I}(\UU) \Delta_{[n] \setminus J}(\VV)
        \intertext{Then, using Lemma~\ref{lem:delta-contraction},}
        &= \sum_{k=1}^{\lfloor n/2 \rfloor} D_{n,k} \sum_{\ell = 1}^{\lfloor \frac{n-2k}{2} \rfloor} C_{n-2k,\ell} \sum_{\substack{I',J' \subseteq [n]:\\ |I'| = |J'| = n-2k - 2\ell} } \frac{\langle F_{\UU, I'}, F_{\VV, J'} \rangle^2}{((n-2k-2\ell)!)^2} \binom{k+\ell}{k}^2 \Delta_{[n] \setminus I'}(\UU) \Delta_{[n] \setminus J'}(\VV) \\
        &= \sum_{k=1}^{\lfloor n/2\rfloor} \sum_{t=1}^{k-1} D_{n,t} C_{n-2t,k-t} \binom{k}{t}^2 \sum_{\substack{I, J \subseteq [n]:\\ |I| = |J| = n-2k} } \frac{\langle F_{\UU, I}, F_{\VV, J} \rangle^2}{((n-2k)!)^2} \cdot \Delta_{[n] \setminus I}(\UU) \Delta_{[n] \setminus J}(\VV).
    \end{align*}
    Therefore, we have
    \begin{align*}
        &\frac{\langle F_{\UU}, F_{\VV} \rangle^2}{(n!)^2} - \frac{\langle F_{\UU}^2, F_{\VV}^2 \rangle}{(2n)!}\\
        &= G_n(\UU, \VV) - H_n(\UU, \VV)\\
        &= \sum_{k=1}^{\lfloor n/2\rfloor} \left(D_{n,k} - \sum_{t=1}^{k-1} D_{n,t} C_{n-2t,k-t} \binom{k}{t}^2\right) \sum_{\substack{I, J \subseteq [n]:\\ |I| = |J| = n-2k} } \frac{\langle F_{\UU, I}, F_{\VV, J} \rangle^2}{((n-2k)!)^2} \cdot \Delta_{[n] \setminus I}(\UU) \Delta_{[n] \setminus J}(\VV).
    \end{align*}
    To show that \eqref{eq:inner-product-SoS} holds for $n$, it suffices to prove
    \begin{align}
        C_{n,k} = D_{n,k} - \sum_{t=1}^{k-1} D_{n,t} C_{n-2t,k-t} \binom{k}{t}^2, \quad \text{ for } 1 \le k \le \lfloor n/2\rfloor. \label{eq:coefficient-identity}
    \end{align}
    Recall that
    \begin{align*}
        C_{n,k} &= \frac{(2n-2k)!(2k)!(n-2k)!(n-2k+1)!}{(2n)!(2k-1)(n-k)!(n-k+1)!}, \\
        D_{n,k} &= \frac{(2k)!((n-k)!)^2(2n-4k+1)!}{(n!)^2(2n-2k+1)!}
    \end{align*}
    for $1\le k \le \lfloor n/2\rfloor$. Using these expressions, we may extend the definition of $C_{n,k}$ and $D_{n,k}$ to $k = 0$ by setting $C_{n,0} \colonequals -1$ and $D_{n, 0} \colonequals 1$. The relation \eqref{eq:coefficient-identity} is then equivalent to
    \begin{align*}
        \sum_{t=0}^k D_{n,t} C_{n-2t,k-t} \binom{k}{t}^2 = 0, \quad \text{ for } 1 \le k \le \lfloor n/2\rfloor.
    \end{align*}
    After plugging in the expressions for $C_{n,k}$ and $D_{n,k}$, we can rewrite the summands as
    \begin{align*}
        &D_{n,t} C_{n-2t,k-t} \binom{k}{t}^2\\
        &= \frac{(2t)!((n-t)!)^2(2n-4t+1)!}{(n!)^2(2n-2t+1)!} \cdot \frac{(2n-2k-2t)!(2k-2t)!(n-2k)!(n-2k+1)!}{(2n-4t)!(2k-2t-1)(n-k-t)!(n-k-t+1)!} \cdot \\
        &\hspace{3cm} \cdot \frac{(k!)^2}{(t!)^2((k-t)!)^2}\\
        &= \frac{(n-2k)!(n-2k+1)!(k!)^2}{(n!)^2} \cdot \frac{2n-4t+1}{2k-2t-1} \cdot\frac{(2t)!}{(t!)^2} \cdot\frac{(2k-2t)!}{((k-t)!)^2} \cdot \\ &\hspace{3cm} \cdot \frac{((n-t)!)^2(2n-2k-2t)!}{(n-k-t)!(n-k-t+1)!(2n-2t+1)!}\\
        &= K_{n,k} \Phi_{n,k}(t),
    \end{align*}
    where we denote for $1 \le k \le \lfloor n/2\rfloor$,
    \begin{align*}
        K_{n,k} &\colonequals \frac{(n-2k)!(n-2k+1)!(k!)^2}{(n!)^2},\\
        \Phi_{n,k}(t) &\colonequals \frac{2n-4t+1}{2k-2t-1} \binom{2t}{t} \binom{2k-2t}{k-t} \frac{((n-t)!)^2(2n-2t-2k)!}{(n-t-k)!(n-t-k+1)!(2n-2t+1)!}, \quad \text{ for } 0 \le t \le k.
    \end{align*}
    By Lemma~\ref{lem:combinatorial-identity}, for $1 \le k \le \lfloor n/2\rfloor$,
    \begin{align*}
        \sum_{t=0}^k \Phi_{n,k}(t) = 0.
    \end{align*}
    As a result, \eqref{eq:coefficient-identity} holds since
    \begin{align*}
        \sum_{t=0}^k D_{n,t} C_{n-2t,k-t} \binom{k}{t}^2 &= K_{n,k} \sum_{t=0}^k \Phi_{n,k}(t) = 0,
    \end{align*}
    and we conclude that \eqref{eq:inner-product-SoS} holds for $n$, completing the inductive step.

    By induction, \eqref{eq:inner-product-SoS} holds for all $n \in \bN$, which shows that
    \begin{align*}
        \frac{\per^2(\AA)}{(n!)^2} - \frac{\per(\AA \otimes \JJ_2)}{(2n)!} &= \sum_{k=1}^{\lfloor n/2 \rfloor} C_{n,k} \sum_{\substack{I, J \subseteq [n]:\\ |I| = |J| = n-2k }} \frac{\per^2(\AA_{I,J})}{((n-2k)!)^2} \cdot \Delta_{[n] \setminus I}(\UU) \Delta_{[n] \setminus J}(\VV). \qedhere
    \end{align*}
\end{proof}

\section{Free Multiplicative Convolutions: Proof of Proposition~\ref{prop:multiplicative-finite-to-free-endpoint}}
\label{sec:mult-endpoint}

\begin{proof}
By commutativity, we may assume that $q$ has only non-negative roots.
The real-rootedness assertion is well-known; see, for example, Proposition~2.16 of \cite{Fujie-2025-RegularityConvergenceFiniteFreeConvolutions}.

First, suppose that the roots $\beta_1,\ldots,\beta_n$ of $q$ are strictly positive, and write $\alpha_1,\ldots,\alpha_n$ for the roots of $p$.
For a polynomial $f(x)$ of degree $n$, define
\[ f^{\vee}(y) \colonequals (-1)^n f(-y). \]
This operation is an involution of polynomials, satisfies $\maxroot(f^{\vee}) = -\minroot(f)$ and, for $f, g$ two polynomials of degree $n$, satisfies $(f \boxplus_n g)^{\vee} = f^{\vee} \boxplus_n g^{\vee}$.

For a fixed $x \in \bR$, define
\begin{align*}
    c_x(y) &\colonequals \prod_{i=1}^n \left(y-\frac{x}{\beta_i}\right), \\
    h_x &\colonequals c_x \boxplus_n p^{\vee}
\end{align*}
The root formulas of Proposition~\ref{prop:real-rooted-ffc} give
\begin{align}
    (p \boxtimes_n q)(x)
    =(-1)^n\left(\prod_{i=1}^n \beta_i\right)h_x(0).
    \label{eq:multiplicative-to-additive-endpoint}
\end{align}

Let $a$ and $b$ be free bounded self-adjoint operators with laws $\mu_p$ and $\mu_q$, respectively, in a tracial $C^*$-probability space with faithful trace.
Then, by the definition of free multiplicative convolution,
\begin{align*}
    E \colonequals \maxsupp(\mu_p \boxtimes \mu_q)
    = \max \spec(b^{1/2}ab^{1/2}).
\end{align*}
For every $x>E$,
\begin{align*}
    xb^{-1}-a
    =b^{-1/2}\left(xI-b^{1/2}ab^{1/2}\right)b^{-1/2}>0.
\end{align*}
The law of $xb^{-1}-a$ is $\mu_{c_x}\boxplus\mu_{p^{\vee}}$.
Observing that $h_x^\vee=c_x^\vee\boxplus_n(p^\vee)^\vee = c_x^{\vee} \boxplus_n p$ and applying Proposition~\ref{prop:additive-finite-to-free-endpoint} to the reflected polynomials shows that
\begin{align*}
    \minroot(h_x) \geq \minsupp(\mu_{c_x}\boxplus\mu_{p^{\vee}})>0.
\end{align*}
It follows that $(-1)^n h_x(0)>0$, and hence, by \eqref{eq:multiplicative-to-additive-endpoint},
\begin{align*}
    (p \boxtimes_n q)(x)>0
\end{align*}
for every $x>E$.
This proves the claimed bound when the roots of $q$ are strictly positive.

For the general case, set $q_{\epsilon}(y)\colonequals q(y-\epsilon)$ for $\epsilon>0$.
The result just proved applies to $p$ and $q_{\epsilon}$.
As $\epsilon \to 0$, the polynomials $p\boxtimes_n q_{\epsilon}$ converge coefficientwise to $p\boxtimes_n q$, and hence their largest roots converge.
For the free operators, we also have
\begin{align*}
    (b+\epsilon I)^{1/2}a(b+\epsilon I)^{1/2}
    \longrightarrow b^{1/2}ab^{1/2}
\end{align*}
in operator norm, so $\maxsupp$ values of the corresponding free multiplicative convolutions converge as well.
The result then follows by letting $\epsilon \to 0$.
\end{proof}

\section{Transvectant Identity: Proof of Proposition~\ref{prop:transvectant-identity}} \label{sec:transvectant-identity}

We first encode the transvectant $\tau_{2k}(F_{\UU}, F_{\UU})$ using the permanent of a matrix, as follows.

\begin{proposition} \label{prop:transvectant-permanent-identity}
Let $\UU = (\uu_1, \dots, \uu_n)^\top \in \bR^{n \times 2}$ be a matrix with row vectors $\uu_1, \dots, \uu_n \in \bR^2$, and $0 \le k \le \lfloor n/2\rfloor$. For $n+1 \le i \le 2n-2k$, define
\begin{align*}
    \uu_i \colonequals (-y, x)^\top.
\end{align*}
Construct the $(2n-2k) \times (2n-2k)$ matrix $\CC$ whose entries are given by
\begin{align*}
    \CC_{i,j} \colonequals \det\begin{pmatrix}
        \uu_i^\top\\
        \uu_j^\top
    \end{pmatrix}, \quad \text{ for } (i,j) \in [2n-2k]^2.
\end{align*}
Then,
\begin{align*}
    \tau_{2k}(F_{\UU}, F_{\UU}) = (-1)^{n-2k}\frac{(2k)!}{((n-2k)!)^2} \cdot \per(\CC).
\end{align*}
\end{proposition}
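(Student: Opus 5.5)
The plan is to expand both sides as explicit sums over partial bijections between the factors of $F_{\UU}(x_1,y_1)$ and $F_{\UU}(x_2,y_2)$, and then match them term by term. Write $\ell_i(x,y) \colonequals a_i x + b_i y$, so that $P(x_1,y_1,x_2,y_2) = \prod_{i=1}^n \ell_i(x_1,y_1)\prod_{j=1}^n \ell_j(x_2,y_2)$. The basic computation is
\[ \Omega\bigl(\ell_i(x_1,y_1)\ell_j(x_2,y_2)\bigr) = a_i b_j - b_i a_j = \det\begin{pmatrix} \uu_i^\top \\ \uu_j^\top\end{pmatrix} = \CC_{i,j}. \]
We also have $\ell_i(x,y) = \det\begin{pmatrix} a_i & b_i \\ -y & x\end{pmatrix} = \CC_{i,m}$ for any $m > n$, and $\CC_{m,j} = -\ell_j(x,y)$ for $m>n$, $j \le n$. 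Moreover, $\CC_{m,m'} = 0$ whenever $m, m' > n$, since those rows coincide.

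\emph{Step 1 (expanding $\Omega^{2k}P$).} Since $\Omega = \partial_{x_1}\partial_{y_2} - \partial_{y_1}\partial_{x_2}$ is a sum of products of one derivative in the first variable pair and one in the second, and each $\ell_i$ is linear, each application of $\Omega$ consumes one still-unused factor from the first group and one from the second. That pair is then replaced by the corresponding $\CC_{i,j}$. Applying $\Omega$ $2k$ times therefore yields a sum over ordered sequences of $2k$ such pairs. Grouping sequences by the underlying unordered matching gives
\[ \Omega^{2k}P = (2k)!\sum_{\substack{S_1,S_2\subseteq[n]\\ |S_1|=|S_2|=2k}} \sum_{\sigma: S_1 \to S_2 \text{ bij.}} \prod_{i\in S_1}\CC_{i,\sigma(i)} \prod_{i\notin S_1}\ell_i(x_1,y_1)\prod_{j\notin S_2}\ell_j(x_2,y_2). \]
The $(2k)!$ counts the orderings of the $2k$ pairs. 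Applying $\calR$ then sets both pairs of variables equal to $(x,y)$. I would make this step rigorous by induction on the number of applications of $\Omega$, using the product rule.

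\emph{Step 2 (expanding $\per(\CC)$).} Let $E \colonequals \{n+1,\dots,2n-2k\}$, so $|E| = n-2k$. Because $\CC$ vanishes on $E\times E$, any permutation $\pi$ with nonzero contribution sends $E$ into $[n]$. Hence $B \colonequals \pi(E)\subseteq[n]$ and $A \colonequals \pi^{-1}(E)\subseteq[n]$ both have size $n-2k$, and $\pi$ restricts to a bijection $[n]\setminus A \to [n]\setminus B$ between sets of size $2k$. The contribution of $\pi$ factors as three pieces:
\begin{itemize}
\item $\prod_{i\in A}\ell_i(x,y)$ from the rows in $A$;
\item $(-1)^{n-2k}\prod_{j\in B}\ell_j(x,y)$ from the rows in $E$;
\item the product of $\CC_{i,\pi(i)}$ over the matched part.
\end{itemize}
For fixed $A$, $B$ and matched part, there are $((n-2k)!)^2$ choices of how $\pi$ maps $A \to E$ and $E \to B$. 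Setting $S_1 = [n]\setminus A$ and $S_2 = [n]\setminus B$, this gives $\per(\CC) = (-1)^{n-2k}((n-2k)!)^2\,\Sigma$, where $\Sigma$ is exactly the double sum appearing in Step 1 after restriction by $\calR$.

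Combining the two steps gives $\tau_{2k}(F_{\UU},F_{\UU}) = \calR\Omega^{2k}P = (2k)!\,\Sigma$. Substituting $\Sigma = (-1)^{n-2k}\per(\CC)/((n-2k)!)^2$ yields the claim. The main obstacle is Step 1: getting the $(2k)!$ multiplicity and the orientation of each determinant right. The first factor of each pair must come from the $(x_1,y_1)$ group, which matches the row index of $\CC$. The two signs also need care: $\CC_{m,j} = -\ell_j$ produces the factor $(-1)^{n-2k}$, while $\CC_{i,m} = +\ell_i$ contributes no sign.
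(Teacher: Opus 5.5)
Your proposal is correct and follows essentially the same route as the paper. You expand $\Omega^{2k}P$ by grouping ordered sequences of differentiated pairs into matchings, which yields the factor $(2k)!$ and $\per(\CC_{S_1,S_2})$; you then expand $\per(\CC)$ using the vanishing $E\times E$ block and count the $((n-2k)!)^2$ bijections through $E$, with your explicit sign bookkeeping ($\CC_{i,m}=\ell_i$, $\CC_{m,j}=-\ell_j$) accounting for the factor $(-1)^{n-2k}$ exactly as in the paper's comparison of the two expansions.
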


\begin{proof}
    For $1\le i \le n$, denote
    \begin{align*}
        \uu_i = (a_i, b_i)^\top \in \bR^2.
    \end{align*}
    By the definition of transvectant,
    \begin{align*}
        \tau_{2k}(F_{\UU}, F_{\UU}) &= \calR(\Omega^{2k} (F_{\UU}(x_1, y_1) F_{\UU}(x_2, y_2)))\\
        &= \calR\left(\Omega^{2k} \left(\prod_{i \in [n]}(a_i x_1 + b_i y_1)\prod_{j \in [n]}(a_j x_2 + b_j y_2)\right)\right)\\
        \intertext{Expanding the $2k$ applications of $\Omega$ and grouping terms by the sets of differentiated factors gives}
        &= \calR\left((2k)! \sum_{\substack{A, B \subseteq [n]:\\ |A|=|B|=2k}} \per(\CC_{A,B}) \prod_{i \in [n] \setminus A }(a_i x_1 + b_i y_1)\prod_{j \in [n] \setminus B}(a_j x_2 + b_j y_2) \right)\\
        &= (2k)! \sum_{\substack{A, B \subseteq [n]:\\ |A|=|B|=2k}} \per(\CC_{A,B}) \prod_{i \in [n] \setminus A }(a_i x + b_i y)\prod_{j \in [n] \setminus B}(a_j x + b_j y). \numberthis \label{eq:perC-1}
    \end{align*}
    For fixed $A$ and $B$, each matching between their elements occurs once for each of the $(2k)!$ possible orders in which its pairs can be differentiated, which gives the permanent and the displayed factorial.
    On the other hand,
    \begin{align*}
        \per(\CC) = \sum_{\pi \in \calS_{2n-2k}} \prod_{i=1}^{2n-2k} \CC_{i,\pi(i)}.
    \end{align*}
    By definition of $\CC$, we have
    \begin{align*}
        \CC_{i,j} = \det\begin{pmatrix}
            \uu_i^\top\\
            \uu_j^\top
        \end{pmatrix} &= \begin{cases}
            a_ib_j - a_jb_i & \quad \text{ if } 1\le i \le n, 1\le j\le n,\\
            a_i x + b_i y & \quad \text{ if } 1 \le i \le n, n+1 \le j \le 2n-2k,\\
            -a_jx - b_j y & \quad \text{ if } n+1\le i \le 2n-2k, 1\le j \le n,\\
            0 & \quad \text{ if } n+1 \le i \le 2n-2k, n+1\le j \le 2n-2k.
        \end{cases}
    \end{align*}
    Since $\CC_{i,j} = 0$ when both $i$ and $j$ are at least $n+1$, the only permutations $\pi \in \calS_{2n-2k}$ that contribute to $\per(\CC)$ satisfy $\pi(i) \in [n]$ and $\pi^{-1}(i) \in [n]$ for $n+1\le i \le 2n-2k$. Thus,
    \begin{align*}
        \per(\CC) &= \sum_{\pi \in \calS_{2n-2k}} \prod_{i=1}^{2n-2k} \CC_{i,\pi(i)}\\
        &= \sum_{\substack{A,B \subseteq [n]:\\ |A|=|B|=2k} } \;\sum_{\substack{\pi \in \calS_{2n-2k}: \text{ for}\\  n+1\le i \le 2n-2k,\\ \pi(i) \in [n] \setminus A,\\ \pi^{-1}(i) \in [n] \setminus B} } \prod_{i=1}^{2n-2k} \CC_{i,\pi(i)}\\
        &= (-1)^{n-2k}((n-2k)!)^2 \sum_{\substack{A,B \subseteq [n]:\\ |A|=|B|=2k} }\per(\CC_{A,B}) \prod_{i \in [n] \setminus A }(a_i x + b_i y)\prod_{j \in [n] \setminus B}(a_j x + b_j y). \numberthis \label{eq:perC-2}
    \end{align*}
    Comparing \eqref{eq:perC-1} and \eqref{eq:perC-2}, we conclude
    \begin{align*}
        \tau_{2k}(F_{\UU}, F_{\UU}) &= (-1)^{n-2k}\frac{(2k)!}{((n-2k)!)^2} \cdot \per(\CC). \qedhere
    \end{align*}
\end{proof}

We will also use the following corollary of MacMahon's Master Theorem to express the permanent in Proposition~\ref{prop:transvectant-permanent-identity}.

\begin{theorem}[{\cite[p.~17]{percus2012combinatorial}}] \label{thm:MMT-corollary}
    Let $\AA \in \bR^{m \times m}$, and $\TT \colonequals \diag(t_1, \dots, t_m)$ where $t_1, \dots, t_m$ are formal variables. Then,
    \begin{align*}
        \per(\AA) = [t_1\cdots t_m] \frac{1}{\det(\II_m - \TT \AA)}.
    \end{align*}
\end{theorem}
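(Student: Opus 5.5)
The plan is to prove the identity directly from the cycle expansion of $\log\det$, working in the ring of formal power series $\bR[[t_1,\dots,t_m]]$, rather than going through the full MacMahon Master Theorem. An alternative route would cite MacMahon, which says that $[t_1^{a_1}\cdots t_m^{a_m}]\det(\II_m-\TT\AA)^{-1}$ equals the coefficient of $x_1^{a_1}\cdots x_m^{a_m}$ in $\prod_i(\sum_j \AA_{i,j}x_j)^{a_i}$. Setting all $a_i=1$, the coefficient of $x_1\cdots x_m$ in $\prod_{i}\sum_j \AA_{i,j}x_j$ is $\sum_{\pi\in\calS_m}\prod_i \AA_{i,\pi(i)}=\per(\AA)$, since each $x_j$ must be picked from exactly one factor. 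To keep things self-contained I would instead argue as follows.

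First, $\det(\II_m-\TT\AA)$ has constant term $1$, so its reciprocal and its logarithm are well-defined formal power series. The identity $\log\det(\II_m - \TT\AA) = \operatorname{tr}\log(\II_m-\TT\AA)$ holds formally; for instance, it holds for small real $t$ by diagonalization or density and is an identity of convergent power series. This gives
\[ \frac{1}{\det(\II_m-\TT\AA)} = \exp\Bigl(\sum_{k\ge1}\frac{1}{k}\operatorname{tr}\bigl((\TT\AA)^k\bigr)\Bigr). \]
Second, expand $\operatorname{tr}((\TT\AA)^k)=\sum_{i_1,\dots,i_k} t_{i_1}\AA_{i_1,i_2}t_{i_2}\AA_{i_2,i_3}\cdots t_{i_k}\AA_{i_k,i_1}$ as a sum over closed walks of length $k$. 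Since we only extract the squarefree coefficient $[t_1\cdots t_m]$, we may work modulo the ideal generated by $t_1^2,\dots,t_m^2$. Modulo this ideal only walks with distinct vertices survive. Each directed cycle $\gamma$ on $k$ distinct vertices arises from exactly $k$ rotations, so the $1/k$ cancels and the exponent becomes $\sum_\gamma w(\gamma)\,t^{V(\gamma)}$. Here $w(\gamma)$ is the product of the entries of $\AA$ along $\gamma$, and $t^{V(\gamma)}=\prod_{i\in V(\gamma)}t_i$.

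Third, expand $\exp$ of this sum and extract $[t_1\cdots t_m]$. A term $\frac{1}{r!}(\sum_\gamma \cdots)^r$ contributes only through ordered $r$-tuples of vertex-disjoint cycles covering $[m]$; any repeated cycle or overlap produces a square of some $t_i$. Each unordered family of $r$ distinct cycles appears $r!$ times, which cancels the $1/r!$. Families of disjoint cycles covering $[m]$ correspond bijectively to permutations $\pi\in\calS_m$ via their cycle decomposition, with weight $\prod_i\AA_{i,\pi(i)}$. Summing over all families therefore gives $\per(\AA)$. Fixed points, i.e.\ cycles of length one, appear through the $k=1$ term $t_i\AA_{i,i}$.

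The only step requiring care is the first: justifying the formal identity $\det(\II-\TT\AA)^{-1}=\exp(\sum_k \operatorname{tr}((\TT\AA)^k)/k)$. Both sides are formal power series in the $t_i$ whose coefficients are polynomials in the entries of $\AA$. They agree as analytic functions near $t=0$ whenever $\TT\AA$ is diagonalizable, which happens for generic $t$ near $0$. Hence all coefficients coincide. The remaining combinatorial bookkeeping is routine.
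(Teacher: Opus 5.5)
Your proposal is correct, but it does not follow the paper's route. The paper gives no argument of its own. It quotes the statement from Percus as the corollary of MacMahon's Master Theorem obtained by setting all exponents equal to one, which is exactly the alternative you sketch in your first paragraph and then set aside.

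Your main argument instead proves the squarefree case from scratch. It combines three ingredients: the identity $\det(\II_m-\TT\AA)^{-1}=\exp\bigl(\sum_{k\ge1}\operatorname{tr}((\TT\AA)^k)/k\bigr)$; reduction modulo the monomial ideal $(t_1^2,\dots,t_m^2)$, which preserves the coefficient of $t_1\cdots t_m$ and commutes with $\exp$; and the exponential formula matching covers of $[m]$ by vertex-disjoint directed cycles with permutations.

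The citation is shorter and delivers every coefficient of $\det(\II_m-\TT\AA)^{-1}$, although the paper only uses the squarefree one (to evaluate $\per(\CC)$ in the proof of Proposition~\ref{prop:transvectant-identity}). Your argument is self-contained and shows why a permanent appears: every $k$-cycle enters with sign $+1$. By contrast, the expansion of $\log\det(\II_m+\TT\AA)$ attaches $(-1)^{k-1}$ to $k$-cycles and produces $\det(\AA)$ instead.

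One justification in your last paragraph needs repair. You claim that $\TT\AA$ is diagonalizable for generic $t$ near $0$, but for a fixed $\AA$ this can fail. For example, with $\AA=\begin{pmatrix}0&1\\0&0\end{pmatrix}$ the matrix $\TT\AA$ is nonzero and nilpotent, hence not diagonalizable, whenever $t_1\neq0$. There are two easy fixes:
\begin{enumerate}
\item Let the entries of $\AA$ vary as well. The coefficients on both sides are polynomials in those entries, so density in $(\AA,t)$ suffices.
\item Avoid diagonalizability altogether. For any complex matrix $X$ with eigenvalues $\lambda_1,\dots,\lambda_m$ and spectral radius less than $1$, Schur triangularization gives $\det(\II_m-X)=\prod_i(1-\lambda_i)$ and $\operatorname{tr}(X^k)=\sum_i\lambda_i^k$. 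Hence the identity holds for all small $t$, and the coefficients agree.
\end{enumerate}
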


Now, we are ready to finish the proof of Proposition~\ref{prop:transvectant-identity}.

\begin{proof}[Proof of Proposition~\ref{prop:transvectant-identity}]
    Let $\UU = (\uu_1, \dots, \uu_n)^\top \in \bR^{n \times 2}$ be a matrix with row vectors $\uu_1, \dots, \uu_n \in \bR^2$, and $0 \le k \le \lfloor n/2\rfloor$. Recall that for $n+1 \le i \le 2n-2k$, we define
    \begin{align*}
        \uu_i \colonequals (-y, x)^\top,
    \end{align*}
    and create a $(2n-2k) \times (2n-2k)$ matrix $\CC$ whose entries are given by
    \begin{align*}
        \CC_{i,j} \colonequals \det\begin{pmatrix}
            \uu_i^\top\\
            \uu_j^\top
        \end{pmatrix}, \quad \text{ for } (i,j) \in [2n-2k]^2.
    \end{align*}

    By Proposition~\ref{prop:transvectant-permanent-identity},
    \begin{align}
        \tau_{2k}(F_{\UU}, F_{\UU}) = (-1)^{n-2k}\frac{(2k)!}{((n-2k)!)^2} \cdot \per(\CC). \label{eq:transvectant-permanent-identity}
    \end{align}
    Using Theorem~\ref{thm:MMT-corollary}, we have
    \begin{align*}
        \per(\CC) = [t_1 \dots t_{2n-2k}] \frac{1}{\det(\II_{2n-2k} - \TT \CC)},
    \end{align*}
    where $\TT = \diag(t_1, \dots, t_{2n-2k})$ is a diagonal matrix of formal variables.

    Writing
    \begin{align*}
        \begin{pmatrix}
            \uu_1^\top\\
            \uu_2^\top\\
            \vdots\\
            \uu_{2n-2k}^\top
        \end{pmatrix} \equalscolon \begin{pmatrix}
            \aa & \bb
        \end{pmatrix}
    \end{align*}
    where $\aa = (a_1, \dots, a_{2n-2k})^\top$ and $\bb = (b_1, \dots, b_{2n-2k})^\top$, we note that
    \begin{align*}
        \CC_{i,j} &= \det\begin{pmatrix}
            \uu_i^\top\\
            \uu_j^\top
        \end{pmatrix}\\
        &= a_i b_j - b_i a_j,
    \end{align*}
    and thus $\CC = \aa \bb^\top - \bb \aa^\top$. Then, using Sylvester's identity,
    \begin{align*}
        \det(\II_{2n-2k} - \TT \CC)
        &= \det\left(\II_{2n-2k} - \begin{pmatrix}
            \TT \aa & -\TT \bb
        \end{pmatrix} \begin{pmatrix}
            \bb^\top \\
            \aa^\top
        \end{pmatrix}\right)\\
        &= \det\left(\II_2 - \begin{pmatrix}
            \bb^\top \\
            \aa^\top
        \end{pmatrix}\begin{pmatrix}
            \TT \aa & -\TT \bb
        \end{pmatrix}\right) \\
        &= \det\begin{pmatrix}
            1 - \sum_{i=1}^{2n-2k} t_i a_ib_i & \sum_{i=1}^{2n-2k} t_i b_i^2\\
            -\sum_{i=1}^{2n-2k} t_i a_i^2 & 1 + \sum_{i=1}^{2n-2k} t_i a_ib_i
        \end{pmatrix}\\
        &= 1 + \left(\sum_{i=1}^{2n-2k} t_i a_i^2\right)\left(\sum_{i=1}^{2n-2k} t_i b_i^2\right) - \left(\sum_{i=1}^{2n-2k} t_i a_ib_i\right)^2\\
        &= 1 + \sum_{1 \le i < j \le 2n-2k} t_i t_j (a_i^2 b_j^2 + a_j^2 b_i^2 - 2a_ib_ia_jb_j)\\
        &= 1 + \sum_{1 \le i < j \le 2n-2k} t_i t_j \CC_{i,j}^2,
    \end{align*}
    and we get
    \begin{align*}
         \per(\CC) &= [t_1 \dots t_{2n-2k}] \frac{1}{\det(\II_{2n-2k} - \TT \CC)}\\
         &= [t_1 \dots t_{2n-2k}]\left(1 + \sum_{1 \le i < j \le 2n-2k} t_i t_j \CC_{i,j}^2\right)^{-1}\\
         &= [t_1 \dots t_{2n-2k}]\sum_{\ell = 0}^\infty (-1)^{\ell} \left(\sum_{1 \le i < j \le 2n-2k} t_i t_j \CC_{i,j}^2\right)^{\ell}\\
         &= (-1)^{n-k} (n-k)! \sum_{M \in \calP_2([2n-2k])} \prod_{\{i,j\} \in M} \CC_{i,j}^2.
    \end{align*}
    Using that
    \begin{align*}
        \CC_{i,j} = \det\begin{pmatrix}
            \uu_i^\top\\
            \uu_j^\top
        \end{pmatrix} &= \begin{cases}
            a_ib_j - a_jb_i & \quad \text{ if } 1\le i \le n, 1\le j\le n,\\
            a_i x + b_i y & \quad \text{ if } 1 \le i \le n, n+1 \le j \le 2n-2k,\\
            -a_jx - b_j y & \quad \text{ if } n+1\le i \le 2n-2k, 1\le j \le n,\\
            0 & \quad \text{ if } n+1 \le i \le 2n-2k, n+1\le j \le 2n-2k,
        \end{cases}
    \end{align*}
    we obtain
    \begin{align*}
         \per(\CC) &=  (-1)^{n-k} (n-k)! \sum_{M \in \calP_2([2n-2k])} \prod_{\{i,j\} \in M} \CC_{i,j}^2\\
         &= (-1)^{n-k} (n-k)!(n-2k)! \sum_{\substack{I \subseteq [n]:\\ |I| = n-2k}} \prod_{i \in I} (a_ix + b_iy)^2 \sum_{M \in \calP_2([n] \setminus I)} \prod_{\{i,j\} \in M} \det\nolimits^{2}\begin{pmatrix}
            \uu_i^\top\\
            \uu_j^\top
        \end{pmatrix}\\
        &= (-1)^{n-k} (n-k)!(n-2k)! \sum_{\substack{I \subseteq [n]:\\ |I| = n-2k}} F_{\UU, I}(x,y)^2 \cdot \Delta_{[n]\setminus I}(\UU). \numberthis \label{eq:permanent-delta-identity}
    \end{align*}
    Combining \eqref{eq:transvectant-permanent-identity} and \eqref{eq:permanent-delta-identity}, we conclude
    \begin{align*}
        \tau_{2k}(F_{\UU}, F_{\UU}) &= (-1)^k\frac{(2k)!(n-k)!}{(n-2k)!} \sum_{\substack{I \subseteq [n]:\\ |I| = n-2k}} F_{\UU, I}(x,y)^2 \cdot \Delta_{[n]\setminus I}(\UU). \qedhere
    \end{align*}
\end{proof}

\section{Combinatorial Identity: Proof of Lemma~\ref{lem:combinatorial-identity}}
\label{sec:combinatorial-identity}

The identity in question can be proved using the Rogers–Dougall ``Very Well-Poised Sum'' \cite{dougall1906vandermonde}.

\begin{proof}
    For $n \in \bN$, we denote the $n$th rising factorial of $a$ by
    \begin{align*}
        a^{(n)} \colonequals a(a+1)\dots (a+n-1),
    \end{align*}
    and set $a^{(0)} \colonequals 1$. Suppose $p, q \in \bN$, and $a_1, \dots, a_p$, and $b_1, \dots, b_q$ satisfy the following:
    \begin{itemize}
        \item $a_p$ is a nonpositive integer.
        \item $b_i^{(r)} \ne 0$ for $1 \le i \le q$, where $r = -a_p$.
    \end{itemize}
    Define the terminating generalized hypergeometric series by
    \begin{align*}
        {}_pF_q\begin{pmatrix}
        \begin{array}{c}
             a_1, \dots, a_p  \\
             b_1, \dots, b_q
        \end{array};  z
    \end{pmatrix} &\colonequals \sum_{t=0}^{-a_p} \frac{a_1^{(t)}\dots a_p^{(t)}}{b_1^{(t)}\dots b_q^{(t)}} \cdot \frac{z^t}{t!}.
    \end{align*}
    Note that the parameters $a_1, \dots, a_p$ and $b_1, \dots, b_q$ are determined by the ratio of the consecutive summands:
    \begin{align*}
        \left(\frac{a_1^{(t+1)}\dots a_p^{(t+1)}}{b_1^{(t+1)}\dots b_q^{(t+1)}} \cdot \frac{z^{t+1}}{(t+1)!}\right)\bigg/\left(\frac{a_1^{(t)}\dots a_p^{(t)}}{b_1^{(t)}\dots b_q^{(t)}} \cdot \frac{z^t}{t!}\right) = \frac{(t+a_1)\dots (t+a_p)}{(t+b_1) \dots (t+b_q)}\cdot \frac{z}{t+1}.
    \end{align*}

    Now we rewrite our summation as a terminating ${}_5F_4$ hypergeometric series evaluated at 1. For $0 \le t \le k$, denote
    \begin{align*}
        \Phi_{n,k}(t) \colonequals \frac{2n-4t+1}{2k-2t-1} \binom{2t}{t} \binom{2k-2t}{k-t} \frac{((n-t)!)^2(2n-2t-2k)!}{(n-t-k)!(n-t-k+1)!(2n-2t+1)!}.
    \end{align*}
    For $0 \le t < k$, the ratio of the consecutive terms in the summation is
    \begin{align*}
        \frac{\Phi_{n,k}(t+1)}{\Phi_{n,k}(t)} &=\frac{\frac{2n-4t-3}{2k-2t-3} \binom{2t+2}{t+1} \binom{2k-2t-2}{k-t-1} \frac{((n-t-1)!)^2(2n-2t-2k-2)!}{(n-t-k-1)!(n-t-k)!(2n-2t-1)!}}{\frac{2n-4t+1}{2k-2t-1} \binom{2t}{t} \binom{2k-2t}{k-t} \frac{((n-t)!)^2(2n-2t-2k)!}{(n-t-k)!(n-t-k+1)!(2n-2t+1)!}}\\
        &= \frac{\left(t+\frac{3}{4}-\frac{n}{2}\right)\left(t +\frac{1}{2}-k\right)}{\left(t - \frac{1}{4} - \frac{n}{2}\right)\left(t + \frac{3}{2}-k\right)} \cdot \frac{4(t+1)\left(t+\frac{1}{2}\right)}{(t+1)^2} \cdot \frac{(t-k)^2}{4(t-k)\left(t + \frac{1}{2} -k\right)}\\
        &\quad \cdot \frac{4(t+k-n)(t-1+k-n)\left(t - \frac{1}{2}-n\right)\left(t-n\right)}{4(t-n)^2(t+k-n)\left(t+\frac{1}{2} + k -n\right)}\\
        &= \frac{\left(t - \frac{1}{2} - n\right)\left(t + \frac{3}{4} - \frac{n}{2}\right)\left(t + \frac{1}{2}\right)\left(t -1 + k - n\right)\left(t-k\right)}{\left(t - \frac{1}{4} - \frac{n}{2}\right)\left(t-n\right)\left(t + \frac{3}{2} - k\right)\left(t + \frac{1}{2} + k - n\right)} \cdot \frac{1}{t+1}.
    \end{align*}
    Therefore, we have
    \begin{align*}
        \sum_{t=0}^k \Phi_{n,k}(t) &= \Phi_{n,k}(0) \sum_{t=0}^{k} \frac{\left(-\frac{1}{2}-n\right)^{(t)}\left(\frac{3}{4} - \frac{n}{2}\right)^{(t)}\left(\frac{1}{2}\right)^{(t)}\left(-1+k-n\right)^{(t)}\left(-k\right)^{(t)}}{\left(-\frac{1}{4} -\frac{n}{2}\right)^{(t)}\left(-n\right)^{(t)} \left(\frac{3}{2}-k\right)^{(t)}\left(\frac{1}{2}+k-n\right)^{(t)}} \cdot \frac{1}{t!}\\
        &= \Phi_{n,k}(0) \cdot {}_5F_4\begin{pmatrix}
        \begin{array}{ccccc}
             -\frac{1}{2} - n & \frac{3}{4} - \frac{n}{2} & \frac{1}{2} & -1 + k - n & -k \\
             & -\frac{1}{4} - \frac{n}{2} & -n & \frac{3}{2} - k & \frac{1}{2} + k - n
        \end{array};  1
    \end{pmatrix},
    \end{align*}
    where the terminating ${}_5F_4$ is well-defined since $\left(-\frac{1}{4} - \frac{n}{2}\right)^{(k)}$, $\left(-n\right)^{(k)}$, $\left(\frac{3}{2} - k\right)^{(k)}$, and $\left(\frac{1}{2} +k-n\right)^{(k)}$ are nonzero. Applying the Rogers–Dougall Very Well-Poised Sum (see \cite[\S4.3(3)]{bailey1935generalized} and  \cite[Equation (9)]{dougall1906vandermonde}) which states that for $m \in \bN$,
    \begin{align*}
        {}_5F_4\begin{pmatrix}
        \begin{array}{ccccc}
             a & 1 + \frac{1}{2}a & b & c & -m \\
             & \frac{1}{2}a & 1+a-b & 1+a-c & 1+a+m
        \end{array};  1
    \end{pmatrix} = \frac{(1+a)^{(m)}(1+a-b-c)^{(m)}}{(1+a-b)^{(m)}(1+a-c)^{(m)}},
    \end{align*}
    we conclude that
    \begin{align*}
        \sum_{t=0}^k \Phi_{n,k}(t) &= \Phi_{n,k}(0) \cdot {}_5F_4\begin{pmatrix}
        \begin{array}{ccccc}
             -\frac{1}{2} - n & \frac{3}{4} - \frac{n}{2} & \frac{1}{2} & -1 + k - n & -k \\
             & -\frac{1}{4} - \frac{n}{2} & -n & \frac{3}{2} - k & \frac{1}{2} + k - n
        \end{array};  1
    \end{pmatrix}\\
        &= \Phi_{n,k}(0) \cdot \frac{\left(\frac{1}{2} -n\right)^{(k)}\left(1-k\right)^{(k)}}{\left(-n\right)^{(k)}\left(\frac{3}{2}-k\right)^{(k)}}\\
        &= 0,
    \end{align*}
    since $(1-k)^{(k)} = (1-k)\cdot (2-k) \cdots (-1) \cdot 0 = 0$ for $k \ge 1$.
\end{proof}

}

\bibliographystyle{alpha}
\bibliography{main}

\end{document}